\numberwithin{equation}{section}     
\def\b{\boldsymbol}
\def\cS{\mathcal{S}}
\def\R{\mathbb{R}}
\newcommand{\E}{\mathbb{E}}
\newtheorem*{theorem*}{Theorem}
\newtheorem{theorem}{Theorem}[section]
\newtheorem{lemma}{Lemma}[section]
\newtheorem{assumption}{Assumption}[section]
\newtheorem{proposition}{Proposition}[section]
\newtheorem{remark}{Remark}[section]
\lstdefinestyle{Python}{
    language        =   Python, 
    basicstyle      =   \zihao{-5}\ttfamily,
    numberstyle     =   \zihao{-5}\ttfamily,
    keywordstyle    =   \color{blue},
    keywordstyle    =   [2] \color{teal},
    stringstyle     =   \color{magenta},
    commentstyle    =   \color{red}\ttfamily,
    breaklines      =   true,   
    columns         =   fixed,  
    basewidth       =   0.5em,
}
\begin{document}

\title{A modified tamed scheme for stochastic differential equations with superlinear drifts}
\author[a]{Zichang Ju\thanks{E-mail: jzc\_sypqf@sjtu.edu.cn}}
\author[b]{Lei Li\thanks{E-mail: leili2010@sjtu.edu.cn}}
\author[a]{Yuliang Wang\thanks{E-mail: YuliangWang\_math@sjtu.edu.cn}}
\affil[a]{School of Mathematical Sciences, Institute of Natural Sciences, Shanghai Jiao Tong University, Shanghai, 200240, P.R.China.}
\affil[b]{School of Mathematical Sciences, Institute of Natural Sciences, MOE-LSC, Shanghai Jiao Tong University, Shanghai, 200240, P.R.China.}
\date{}
\maketitle

\begin{abstract}
Explicit discretizations of stochastic differential equations often encounter instability when the coefficients are not globally Lipschitz. The truncated schemes and tamed schemes have been proposed to handle this difficulty, but truncated schemes involve analyzing of the stopping times while the tamed schemes suffer from the reduced order of accuracy.
We propose a modified tamed scheme by introducing an additional cut-off function in the taming, which enjoys the convenience for error analysis and preserving the original order of explicit discretization. While the strategy could be applied to any explicit discretization, we perform rigorous analysis of the modified tamed scheme for the Euler discretization as an example. Then, we apply the modified tamed scheme to the stochastic gradient Langevin dynamics for sampling with super-linear drift, and obtain a uniform-in-time near-sharp error estimate under relative entropy.
\end{abstract}

\section{Introduction}\label{sec:intro}
       
Stochastic differential equations (SDEs) have a wide range of applications in various related fields, including financial engineering, artificial intelligence, computational chemistry, to name a few. Though admitting well-posedness under suitable conditions \cite{rogers2000diffusions}, most SDEs do not have an explicit analytical solution. Therefore, seeking efficient and stable numerical approaches to solve SDEs is of great significance. One most classical and simplest method is the celebrated Euler-Maruyama scheme. It has been verified both theoretically and experimentally that the Euler's scheme converges to the original SDE with at least half-order strong error when both the drift and diffusion terms are globally Lipschitz \cite{gikhman2004theory, kloeden1992stochastic}. However, when the drift grows super-linearly, it can be shown that the moments of Euler approximation could diverge to infinity \cite{mattingly2002ergodicity,milstein2005numerical,hutzenthaler2011strong}. Some schemes modify the drift term by truncating or taming to guarantee the moment bound, and thus one is able to derive the convergence rigorously \cite{hutzenthaler2012strong, sabanis2013note, mao2015truncated, guo2017partially}. Despite the efficiency of these schemes, it turns out that the rate of their convergence (especially the weak convergence rate) is not the most satisfactory (see for instance \cite[Theorem 5.6]{wang2024weak}).

 Let us consider the following SDE that allows time dependency of both the drift and diffusion:
        \begin{equation}\label{SDE}
		d{X}(t)= {b}\big(t, {X}(t)\big)dt+ {{\sigma}}\big(t, {X}(t)\big)d {W}_t,\quad X(0) = X_0.
	\end{equation}
	Here $X(t) \in \mathbb{R}^d$, $b: \mathbb{R}_{+}\times \mathbb{R}^d \rightarrow \mathbb{R}^d$ is a non-global-Lipschitz drift function, $\sigma : \mathbb{R}_{+} \times \mathbb{R}^{d} \rightarrow \mathbb{R}^{d\times d'}$ is a global-Lipschitz diffusion function, and $W_t$ is the $d'$-dimensional Brownian motion on a filtered probability space $(\Omega, \mathcal{F}, \mathbb{P}, (\mathcal{F}_t)_{t\geq 0})$.  For a given constant time step size $h$, denote $T_i := ih$, $i \in \mathbb{N}$. We further denote 
    \begin{gather}
    \kappa(t) := 
    \begin{cases} 
    T_k & t \in (T_k, T_{k+1}]\\
    0  & t=0.
    \end{cases}
    \end{gather}

    Since the classical Euler's method may blow up when the drift grows super-linearly, it is a natural idea to make some modifications to the drift. One typical approach is the tamed Euler scheme \cite{hutzenthaler2012strong,sabanis2013note}:
    \begin{equation}\label{eq:originaltamed}
		d {X_h}(t)= {b^h_{\mathrm{tame}}}\Big(\kappa(t), {X_h}\big(\kappa(t)\big)\Big)dt+ {\sigma}\Big(\kappa(t), {X_h}\big(\kappa(t)\big)\Big)d {W}_t,\quad X_h(0) = X_0,
	\end{equation} 
	where
	\begin{equation*}
		 {b^h_{\mathrm{tame}}}(t, {x}):=\frac{ {b}(t, {x})}{1+h^{\alpha}| {b}(t, {x})|},
	\end{equation*}
	with $\alpha \in (0,1/2]$. Obviously, the tamed drift if bounded by $|b^h_{\mathrm{tame}}| \leq \min(h^{-\alpha}, |b|)$ for any $t$ and $x$. Based on this property, it can be rigorously proved that the strong error of \eqref{eq:originaltamed} is the same as the original Euler's scheme under the global Lipschitz condition, which is half order (see \cite[Corollary 2.3]{sabanis2013note}). However, its weak order can not achieve as high as the original discretization (at most $\alpha$), mainly due to the error brought by the taming \cite[Lemma 4.1]{sabanis2013note}. In many applications for SDEs, especially those for sampling tasks, people tend to focus more on the error under weaker topologies. This is because with the samples generated by the numerical scheme, many practical tasks often require the further computation of some statistical quantities via integration with respect to some given function. 

    Another category of numerical schemes handling the super-linear growing drift applies the ``truncating" idea. For the time-homogeneuous case ($b(t,x) = b(x)$, $\sigma(t,x) = \sigma(x)$), in \cite{mao2016convergence, mao2015truncated}, the truncated Euler's scheme is given by
    \begin{equation}\label{eq:truncatedEuler}
        dX_{h}(t) = b_{\mathrm{trun}}^{h}( X_{h}(\kappa(t))) dt + \sigma_{\mathrm{trun}}^{h}( X_{h}(\kappa(t))) dW_t,
    \end{equation}
    where for $h \in(0,1)$, the truncated function is defined as ($\sigma_{\mathrm{trun}}^{h}$ is similar)
    \begin{equation*}
b_{\mathrm{trun}}^{h}(x)=b\left(\left(|x| \wedge \mu^{-1}(\epsilon(h))\right) \frac{x}{|x|}\right).
    \end{equation*}
Clearly, $b_{\mathrm{trun}}^{h}$ agrees with $b$
for $|x|\le \mu^{-1}(\epsilon(h))$.
The truncated scheme can also guarantee the stability of the numerical solution as the far away field has been discarded. The truncated scheme usually does not have the reduced order problem. However, its rigorous convergence often involves analyzing of the stopping times, thus bringing potential challenges in various practical settings especially for interacting particle systems (see, for example, \cite{guo2024convergence}).

In this paper, we propose a modified tamed scheme, mainly by introducing a cut-off function in the taming. The idea is to keep the taming only for large $|x|$, while discarding the taming for $|x|=O(1)$, which does not affect the numerical stability and contributes to the typical values of the solutions (see Section \ref{sec:scheme} for more details).  This approach in spirit is to combine the truncating and taming ideas together, avoiding the disadvantages of both-- we no longer need to analyzing the stopping times and are albe to keep the order of accuracy.

This modified scheme in principle can be applied to any explicit discretization, and we use the Euler discretization as an example. 
In the corresponding modified tamed Euler (MTE) scheme (see more details in \eqref{MTE} below) and the particular version for Langevin sampling (tamed stochastic gradient Langevin dynamics, T-SGLD), 
we justify the convergence of the proposed scheme both theoretically and numerically in Sections \ref{sec:convergence} - \ref{sec:numerical} below. 
As we will discuss in more detail in Section \ref{sec:scheme} below, our proposed MTE scheme also allows inaccurate drift (i.e. the random batch approximation for the original drift) to reduce the computational cost. Note that the theoretical analysis for schemes with random batch is usually non-trivial, especially when it comes to obtaining sharp error bounds under relative entropy \cite{li2022sharp, huang2025mean}. In this paper, we successfully overcome the additional challenges brought by the random batch and obtain a quantitative error bound with a nearly optimal rate.

\paragraph{Related work} 
The ``taming" idea has also been applied to construct numerical schemes under other settings. For instance, the tamed Milstein method in \cite{kumar2016tamed, wang2013tamed}, the tamed scheme for non-Lipschitz diffusion coefficients in \cite{sabanis2016euler}, to name a few. In \cite{wang2024thesis}, the tamed scheme has been extended to inaccurate drifts (random batch approximation) with supelinear growth. Recently, the manuscript \cite{liu2025geometric} has proposed a class of new tamed schemes for stochastic PDEs. Besides the taming idea, there exist other approaches to handle the approximation for SDEs with non-globally Lipschitz drifts. In various truncated schemes \cite{mao2015truncated,deng2019truncated,liu2020truncated}, the values of drift and diffusion functions in the far field are modified so that the resulting scheme has strong convergence to the original SDE under the super-linear growth condition. In \cite{fang2020adaptive}, an adapted Euler's scheme was proposed, where the timestep is adjusted according to the value of the SDE solution and the growing behavior of the drift function. In \cite{bossy2021weak}, for a class of SDEs with non-Lipschitz drifts and multiplicative noises, the authors proposed a (semi-explicit) exponential-Euler scheme with rigorous weak convergence. Remarkably, for many schemes mentioned above, recently in \cite{wang2024weak}, the authors give some general theorems for their weak error analysis, which partially motivates us to design novel schemes with improved weak convergence rates.


The rest of this paper is organized as follows. We introduce the details of the proposed modified tamed scheme in Section \ref{sec:scheme}. The convergence analysis of the MTE is provided in section \ref{sec:convergence}, both for the strong error and the weak error.  In Section \ref{sec:SGLD}, we apply the modified tamed scheme to the Langevin sampling task and propose the tamed stochastic gradient Langevin dynamics (T-SGLD) algorithm. A rigorous sharp error bound under relative entropy is also obtained in Section \ref{sec:SGLD}. In Section \ref{sec:numerical}, we give some numerical experiments to justify our theoretical findings and show the efficiency of the MTE scheme. Some further discussions are given in Section \ref{sec:discussion} and more technical details during the convergence analysis are given in the Appendix.

         
 \section{The modified tamed scheme}\label{sec:scheme}

In this section, we give more details of the proposed modified tamed scheme. For simplicity, below we will consider the simplest modified tamed Euler (MTE) scheme.

Recall the SDE
	\begin{equation}\label{SDE1}
		d{X}(t)= {b}\big(t, {X}(t)\big)dt+ {{\sigma}}\big(t, {X}(t)\big)d {W}_t,\quad X(0) = X_0,
	\end{equation}    
and the tamed Euler scheme \cite{sabanis2013note}:
\begin{equation*}
d {X_h}(t)= {b^h_{\mathrm{tame}}}\Big(\kappa(t), {X_h}\big(\kappa(t)\big)\Big)dt+ {\sigma}\Big(\kappa(t), {X_h}\big(\kappa(t)\big)\Big)d {W}_t, \quad X_h(0) = X_0,
\end{equation*} 
where
\begin{equation*}
		 {b^h_{\mathrm{tame}}}(t, {x}):=\frac{ {b}(t, {x})}{1+h^{\alpha}| {b}(t, {x})|},
\end{equation*}
for any $t\in [0,T]$ and $\alpha \in (0,1/2]$. Clearly,
\[
b^h_{\mathrm{tame}}-b=\frac{h^\alpha|b|b}{1+h^\alpha|b|}
\]
so the tamed scheme causes an $O(h^{\alpha})$ error, which reduces the convergence rate if the corresponding discretization has convergence order higher than $\alpha$. For the strong error of Euler discretization, this is not an issue because the order is $1/2$ and one may choose $\alpha=1/2$. If we consider weak convergence and higher order schemes, this is clearly a drawback.  Motivated by this, we aim to build a modified taming approach to improve the order of accuracy, compared with the $b^h_{\mathrm{tame}}$ above.

Our observation is that the taming is only needed when $|X|$ is large to prevent numerical instability. Typically $|X|$ is $O(1)$ and the taming error is still present for such values. By this observation, we propose the following modified tamed scheme by introducing a cut-off function in the taming scheme. In particular, we replace the original drift function $b$ in \eqref{SDE} by $b^h$ defined in \eqref{eq:modifiedtamedb} above,
\begin{equation}\label{eq:modifiedtamedb}
		 { b^{h}}(t, {x})=\frac{ {b}(t, {x})}{1+\psi( \gamma h^\alpha| {b}(t, {x})|)}.
	\end{equation}
Here, $\alpha \in (0,\frac{1}{2}]$, $\gamma > 0$, and $\psi: \mathbb{R}_{+} \rightarrow \mathbb{R}_{+}$ is a cut-off function satisfying:
\begin{equation}\label{eq:cutoff}
\psi(r)=\begin{cases}
			\begin{aligned}
			&0, &r\leq 1, \\
			&r, &r\geq 2, \\
			&\text{smooth increasing}, &r\in(1,2).
			\end{aligned}
		\end{cases}
\end{equation}
The parameter $\gamma$ is chosen such that for a major fraction of the values of $X$ in the simulation, $\psi(\gamma h^{\alpha}|b(t, X)|)=0$. 

Compared with the \eqref{eq:originaltamed}, in the modified scheme, 
$b^{h} = b$ if $|b| \leq \gamma^{-1}h^{-\alpha}$, which means that we tame the drift only when $|b|$ is greater than $O(h^{-\alpha})$. On the other hand, 
\begin{equation}\label{eq:bhbound}
| { b^{h}}(t,x)|\leq \min\left(2\gamma^{-1} h^{-\alpha}, | b(t,x)|\right)
\end{equation}
 still holds, so the drift is tamed only when $|b(t,x)|$ is relatively large.  This plays a key role to preventing the moment of the numerical solution $\hat{X}_h(t)$ from growing to infinity. In this sense, our cutoff function $\psi$ shares similarity in spirit with the truncated method. 
The benefit is that we do not have to spare our effort to treat the stopping times as in the truncated schemes.  With the moment bound (see in Lemmas \ref{SE2} and \ref{UITMB} below), the probability of the event $\{|b(t,\hat{X}_h(t))| > h^{-\alpha} \}$ is small due to the Markov inequality. Consequently, we are able to obtain a crucial and improved $L^p$ estimate of the taming error, which can have arbitrary accuracy in terms of the time step $h$, which means that this taming technique can preserve the order of the original explicit discretization for globally Lipschitz coefficients.  See more details in Lemma \ref{SE4}  below.

If we consider the Euler's discretization, the corresponding implementable  modified tamed Euler (MTE) scheme is given by
\begin{equation}
        d {\hat{X}_h}(t)= {b^{h}}\Big(\kappa(t), {\hat{X}_h}\big(\kappa(t)\big)\Big)dt+ {\sigma}\Big(\kappa(t), {\hat{X}_h}\big(\kappa(t)\big)\Big)d {W}_t.
\end{equation}
 Moreover, our scheme allows an inaccurate drift $b^{\xi}$ ($\mathbb{E}_{\xi} b^{\xi} = b$, where $\xi \sim \nu$ is some random variable with distribution $\nu$), which can help largely reduce the computational cost in some practical applications (see for instance \cite{robbins1951stochastic, jin2020random, welling2011bayesian, li2022sharp}). For example, in many practical applications such as the Langevin sampling for Bayesian inference \cite{welling2011bayesian}, one often has
    \begin{equation*}
        b(x)=b_0(x)+\frac{1}{N}\sum_{i=1}^N b_i(x).
    \end{equation*}
In this case, similarly as in SGD, $\xi$ often represents the random minibatch of $\{1,\cdots, N\}$. Namely, for fixed batch-size $S$ ($S$ is a deterministic constant), $\xi$ is the random set $\mathcal{S}=\{(a_1,\dots,a_S): a_i\neq a_j, \forall i\neq j\}$ that is  uniformly randomly  chosen from $\{1,\dots, N \}$. Then the corresponding unbiased estimate $b^{\xi}$ is 
    \begin{equation*}
        b^{\xi}(x)=U_0(x)+\frac{1}{S}\sum_{i=1}^S U_{a_i}(x).
    \end{equation*}   
Then the proposed MTE allowing random batch approximation $\xi$ is given by
\begin{equation}\label{MTE}
		d {\hat{X}_h}(t)= {b^{\xi,h}}\Big(\kappa(t), {\hat{X}_h}\big(\kappa(t)\big)\Big)dt+ {\sigma}\Big(\kappa(t), {\hat{X}_h}\big(\kappa(t)\big)\Big)d {W}_t,
\end{equation} 
where
\begin{equation}\label{eq:modifiedtameddrift}
		 { b^{\xi, h}}(t, {x})=\frac{ {b^\xi}(t, {x})}{1+\psi( \gamma h^\alpha| {b^\xi}(t, {x})|)}.
\end{equation}

It is well-known that the random batch approximation in drift has strong order $1/2$ and weak order $1$ \cite{jin2021convergence}, 
the same as the Euler discretization. Hence, we will show that the MTE scheme with random batch approximation will have order of strong convergence being $1/2$ in this section and order of weak convergence being $1$ in next subsection. The case without random batch approximation is clearly included in these results.

Though in this work, we mainly consider the error analysis of the MTE scheme, we remark that the modified tamed scheme is clearly extensible to other schemes besides the Euler discretization. Consider the Milstein scheme as an example. The Milstein scheme for the SDE \eqref{SDE} is given by \cite{mil1975approximate}:
\begin{multline}\label{eq:milstein}
Y(T_{k+1})=Y(T_k)+b(T_k,Y(T_k))h+\sigma(T_k, Y(T_k)) \cdot (W(T_{k+1}) - W(T_k))\\
+\frac{1}{2} \sigma (T_k,Y(T_k)) \cdot \nabla \sigma (T_k, Y(T_k))(|W(T_{k+1}) - W(T_k)|^2-h),
\end{multline}
where $\nabla$ means the gradient with respect to the space variable. The original tamed Milstein scheme only has a half-order weak convergence order (see for instance \cite[Theorem 5.6]{wang2024weak}).
Accordingly, in our modified tamed Milstein (MTM) scheme, one simply replaces the function $b(t,x)$ in \eqref{eq:milstein} above by \eqref{eq:modifiedtamedb}. Using similar derivations in Sections \ref{sec:strongconvergence} and \ref{sec:weakconvergence}, we can obtain both strong and weak convergence rates for the proposed MTM scheme to be improved to first order. Note that when the noise is additive, i.e., when $\sigma$ does not depend on $x$, the Milstein scheme is reduced to the Euler scheme. Hence, the strong and weak order of MTE with additive noise without random batch would be $1$. On the other hand, if there is random batch approximation, the strong order is then reduced to $1/2$ for MTE, mainly due to the effect of random batch, while the weak order is still $1$.

The following lemmas establish some simple facts for the taming operator. In particular, the modified tamed function $b^h(\cdot)$ shares similar polynomial upper bounds with the original function $b(\cdot)$, as well as the their derivatives, which will be frequently used in our proofs.
\begin{lemma} \label{DPL}
            Assume the function $b:\mathbb{R}^d \rightarrow \mathbb{R}^d$ satisfies $|\nabla b(x)|\leq C(|b(x)|+1)$, then for any cut-off function $\psi:\mathbb{R} \rightarrow \mathbb{R}$ defined in \eqref{eq:cutoff}, the function $b^h=\frac{b}{1+\psi(  h^\alpha|b|)}$ also satisfies $|\nabla b^h(x)|\leq C(|b^h(x)|+1)$.
\end{lemma}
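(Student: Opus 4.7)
The plan is to apply the chain and quotient rules to $b^h(x)=b(x)/(1+\psi(h^\alpha|b(x)|))$ and then to exploit two structural properties of the cut-off $\psi$. First, I would compute
\[
\nabla b^h(x)\;=\;\frac{\nabla b(x)}{1+\psi(h^\alpha|b(x)|)}\;-\;\frac{h^\alpha\,\psi'(h^\alpha|b(x)|)}{\bigl(1+\psi(h^\alpha|b(x)|)\bigr)^2}\;b(x)\otimes\nabla|b(x)|.
\]
The Kato-type inequality $|\nabla|b(x)||\le|\nabla b(x)|$ (used where $b\neq 0$; where $b=0$ the second summand vanishes because $\psi'(0)=0$ and $\psi'$ is flat near $0$) yields, after factoring out $|\nabla b|/(1+\psi)$,
\[
|\nabla b^h(x)|\;\le\;\frac{|\nabla b(x)|}{1+\psi(h^\alpha|b(x)|)}\left(1+\frac{h^\alpha|b(x)|\,\psi'(h^\alpha|b(x)|)}{1+\psi(h^\alpha|b(x)|)}\right).
\]

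The key auxiliary step is then to show that the scalar function $\Phi(u):=u\psi'(u)/(1+\psi(u))$ is uniformly bounded on $[0,\infty)$ by a constant $M$ depending only on the profile $\psi$, and in particular independent of $h$. This is immediate by splitting the range of $u$ according to the definition \eqref{eq:cutoff} of $\psi$: for $u\le 1$ one has $\psi'(u)=0$, hence $\Phi(u)=0$; for $u\in(1,2)$ the numerator and denominator are smooth on a compact interval with denominator bounded below by $1$, so $\Phi$ is bounded there; and for $u\ge 2$ one has $\psi(u)=u$, $\psi'(u)=1$, which gives $\Phi(u)=u/(1+u)\le 1$. Substituting this back into the display above yields $|\nabla b^h(x)|\le(1+M)\,|\nabla b(x)|/(1+\psi(h^\alpha|b(x)|))$.

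Finally I would combine this with the hypothesis $|\nabla b|\le C(|b|+1)$ and the identity $|b^h|=|b|/(1+\psi(h^\alpha|b|))$. Since $1+\psi\ge 1$,
\[
\frac{|\nabla b(x)|}{1+\psi(h^\alpha|b(x)|)}\;\le\;C\,\frac{|b(x)|+1}{1+\psi(h^\alpha|b(x)|)}\;=\;C|b^h(x)|+\frac{C}{1+\psi(h^\alpha|b(x)|)}\;\le\;C\bigl(|b^h(x)|+1\bigr),
\]
which delivers the desired bound with a possibly enlarged constant $C(1+M)$, still independent of $h$.

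The whole argument is essentially bookkeeping; there is no substantive obstacle. The only mildly delicate point is the uniform bound on $\Phi$, and this is precisely where the two designed features of $\psi$ — the flat region near zero (which kills the taming correction for small $|b|$ and removes any singularity in $\nabla|b|$) and the linear region at infinity (which forces the denominator $1+\psi$ to grow like $u$) — are used in a complementary way.
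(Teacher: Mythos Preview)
Your proof is correct and follows essentially the same approach as the paper: compute $\nabla b^h$ explicitly, then split according to the three regimes $h^\alpha|b|\le 1$, $h^\alpha|b|\in(1,2)$, $h^\alpha|b|\ge 2$ of the cut-off $\psi$. The only cosmetic difference is that you first factor out $|\nabla b|/(1+\psi)$ and bound the scalar $\Phi(u)=u\psi'(u)/(1+\psi(u))$ uniformly, whereas the paper carries out the three cases directly on the full expression; the content is the same.
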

\begin{proof}
By definition of $b^h$, one has
\begin{equation}\label{eq:nablabhexpression}
    \nabla b^h = \frac{\nabla b}{1+\psi(  h^\alpha|b|)}-\frac{ \psi'(  h^\alpha |b|)  h^\alpha b \cdot \nabla b \otimes b  }{(1+\psi(  h^\alpha|b|))^2|b|}.
\end{equation}
Denote $r:=  h^\alpha |b|$. The case $r<1$ is trivial since $b^h = b$.
When $r>2$, $\psi(r)=r$. Then
\begin{equation*}
    \begin{aligned}
                \big| \nabla b^h| = & |\frac{\nabla b}{1+  h^\alpha |b|}-\frac{  h^\alpha b \cdot \nabla b}{(1+  h^\alpha |b|)^2|b|}\otimes b \big|\\
                \leq & \big(1 + \frac{  h^\alpha |b|^2}{|b|(1+  h^\alpha |b|)} \big)   \frac{C(|b|+1)}{1+  h^\alpha |b|} \leq \frac{2C(|b|+1)}{1+  h^\alpha |b|} \leq C' (|b^h|+1).
    \end{aligned}
\end{equation*}
When $1\leq r\leq 2$ (so $  h^\alpha |b| \leq 2$), $\psi(r)\in [1,2]$, $\psi'(r)$ is bounded.  Then
\begin{equation*}
    | \nabla b^h| \leq \frac{C(|b|+1)}{1+\psi(  h^\alpha|b|)} \big(1 + \frac{ \psi'(  h^\alpha |b|)  h^\alpha |b|}{1+\psi(  h^\alpha|b|)} \big) \leq \frac{C'(|b|+1)}{1+\psi(  h^\alpha|b|)} \leq C'' (|b^h|+1).
\end{equation*}
Concluding all the cases above gives the desired result.
\end{proof}

Using a similar proof, one can conclude the following and we omit the detailed proof.
  \begin{lemma}\label{DPL2}
            Assume the function $b:\mathbb{R}^d \rightarrow \mathbb{R}^d$ satisfies $\max\left(|b(x)|, |\nabla b(x)|, |\nabla^2 b(x)|  \right) \leq C(|x|^\ell+1)$, then for any cut-off function $\psi:\mathbb{R}_{+} \rightarrow \mathbb{R}_{+}$ defined in \eqref{eq:cutoff}, there exists $C'$, $\ell'$ such that the function $b^h=\frac{b}{1+\psi(  h^\alpha|b|)}$  satisfies $|\nabla^2 b^h(x)|\leq C'(|x|^{\ell'}+1)$.
   \end{lemma}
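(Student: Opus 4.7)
The plan is to mirror the proof of Lemma \ref{DPL} but carry the differentiation one step further. First I would compute $\nabla^2 b^h$ by differentiating the expression \eqref{eq:nablabhexpression} for $\nabla b^h$ once more; this produces a finite sum of terms, each of which is a product of (i) a rational function of $\psi(h^\alpha|b|)$ of bounded degree in the denominator, (ii) a factor of the form $h^{k\alpha}|b|^{j}$ with $k\in\{1,2\}$ and $j$ a non-negative integer, (iii) a tensor built from $b$, $\nabla b$, and $\nabla^2 b$, and (iv) one of $\psi'(h^\alpha|b|)$ or $\psi''(h^\alpha|b|)$ (or a product of two $\psi'$'s). So the only thing I need is a uniform polynomial bound for each such term.

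Next I would split into the same three regimes as in Lemma \ref{DPL}, namely $r:=h^\alpha|b|<1$, $1\le r\le 2$, and $r>2$. When $r<1$, we have $b^h=b$, so $|\nabla^2 b^h|=|\nabla^2 b|\le C(|x|^\ell+1)$ trivially. When $1\le r\le 2$, we have $h^\alpha|b|\le 2$, $1+\psi(r)\ge 1$, and the cut-off derivatives $\psi'(r),\psi''(r)$ are bounded by constants depending only on $\psi$; then each term in $\nabla^2 b^h$ is controlled by a constant multiple of $|\nabla^2 b|+|\nabla b|^2/|b|+|\nabla b|\le C(|x|^\ell+1)+C(|x|^\ell+1)^2$, giving the desired polynomial bound with a larger exponent $\ell'$. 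When $r>2$, we have $\psi(r)=r$, $\psi'(r)=1$, $\psi''(r)=0$, so only finitely many terms survive; the denominator factor $(1+h^\alpha|b|)^{-m}$ can either be bounded by $1$ (if that is enough) or used to absorb the factors $h^{k\alpha}|b|^{j}$ in the numerator that come from differentiating the rational factor. In every surviving term, one estimates the resulting tensor by the hypotheses $|b|,|\nabla b|,|\nabla^2 b|\le C(|x|^\ell+1)$ and sees that the overall bound is polynomial in $|x|$.

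Combining the three cases and taking $\ell'$ and $C'$ large enough to dominate all of them yields $|\nabla^2 b^h(x)|\le C'(|x|^{\ell'}+1)$. The argument is genuinely routine; the main obstacle is simply the algebraic bookkeeping for the many terms produced by a second derivative of a rational function of $b$, together with the need to track how the factors $h^\alpha$ and powers of $|b|$ combine in the regime $r>2$. Because the $h$-dependence only enters through $r=h^\alpha|b|$, which is either bounded by $2$ or absorbed by the denominator, the constants $C'$ and $\ell'$ can be chosen independently of $h$.
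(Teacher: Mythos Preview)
Your proposal is correct and follows essentially the same route as the paper's (omitted) proof: the paper also splits into the three regimes $r<1$, $1\le r\le 2$, $r>2$ and, after differentiating \eqref{eq:nablabhexpression} once more, bounds $|\nabla^2 b^h|$ in each non-trivial regime by $C(|\nabla^2 b|+|\nabla b|^2)\le C'(|x|^{\ell'}+1)$, using exactly the observations you list (boundedness of $\psi',\psi''$ and $h^\alpha|b|\le 2$ when $1\le r\le 2$; $\psi''=0$ and absorption of $h^{k\alpha}|b|^j$ by the denominator when $r>2$).
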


Below, in Sections \ref{sec:convergence} - \ref{sec:SGLD}, we conduct error analysis for the proposed MTE scheme. For simplicity, we set $\gamma=1$ in the rest of this paper.

\section{Convergence of the modified tamed Euler scheme}\label{sec:convergence}

In this section, we apply the modified tamed strategy to the basic Euler discretization, which we name modified tamed Euler (MTE) scheme. We perform the convergence analysis in both the strong and weak sense with the possibility that the drift is replaced by its random batch approximation.


\subsection{Strong convergence rate of the MTE scheme}\label{sec:strongconvergence}

The assumptions for the original drift $b$ and the inaccurate drift $b^\xi$ are similar to those in \cite{sabanis2013note}. Note that we only require the one-sided Lipschiz and some polynomial growth condition for the drift, and do not require the global Lipschitz condition which is much more restrictive.

\begin{assumption} \label{SEA} 
Fix any $T>0$. We assume $b^{\xi}$ and $\sigma$ satisfy the following.
   \begin{enumerate}[label=(\alph*)]
 \item There exists a positive constant $K$ such that,
\begin{gather*}
x\cdot {b^\xi} (t, {x}) \vee | {\sigma}(t, {x})|^2 \leq K(1+| {x}|^2)
\end{gather*}
	for any $t\le T$ and $ {x}\in \mathbb{R}^d$.
 \item  There exist  $\ell \geq 1$ and $L>0$ such that, for any $t\le T$, 
 \begin{gather*}
 \begin{split}
&( {x}- {y})^{T}\big( {b^\xi} (t, {x})- {b^\xi} (t, {y})\big)\vee| {\sigma}(t, {x})- {\sigma}(t, {y})|^2\leq L| {x}- {y}|^2,\\
&|b^\xi(t, {x})- b^\xi(t, {y})|\leq L(1+| {x}|^\ell+| {y}|^\ell)| {x}- {y}|
\end{split}
\end{gather*}
	for all $ {x}, {y}\in \mathbb{R}^d$. 

\item The values of $b^{\xi}$ at $(0, 0)$ are bounded $\sup_{\xi}|b^{\xi}(0,0)|<\infty$ and there exist positive constants $M>0$, $m\geq 2$ such that, for any $s,t\in[0,T]$:
	\begin{gather*}
    \begin{aligned}
		| b^\xi(s, {x})- b^\xi(t, {x})|+| \sigma(s, {x})- \sigma(t, {x})| \leq M(1+|x|^m)|s-t|.
	\end{aligned}
    \end{gather*}
        \end{enumerate}
         Above, the constants are all independent of $\xi$.
\end{assumption}


Clearly, the above conditions for $b^{\xi}$ still hold 
for $b$ as $b=\E b^{\xi}$.  Clearly, setting $y=0$ in (b), and $(s, x)=(0, 0)$ in $(c)$, one finds that $b^{\xi}$ and $b$ have a polynomial bound.
One typical example that satisfies Assumption \ref{SEA} is that $\sigma(t,x) \equiv 1$ and $b^{\xi}(t,x) = b(t,x) = -x^3 $ for all $t\geq 0$ and $x \in \mathbb{R}$. Clearly, $b(t,x)$ is not globally Lipschitz in $x$.

\begin{assumption}\label{ass:initial}
For any $p\ge 1$, $\E|X(0)|^p<\infty$.
\end{assumption}

 Theorem \ref{thm:strong} below shows that our MTE scheme has strong convergence for SDEs with such super-linear growing drifts, and the convergence rate is the same as the Euler's scheme. 
 \begin{theorem}\label{thm:strong}
     When Assumptions \ref{SEA} and \ref{ass:initial} hold, the MTE scheme \eqref{MTE} with $\alpha \in (0,1/2]$ converges to the exact solution of SDE \eqref{SDE} with order 1/2, i.e.
     \begin{equation}
         \sup_{0\leq t \leq T} \mathbb{E}[| {X}(t)- {\hat{X}_h}(t)|^2]\leq Ch,
     \end{equation}
	where $C$ is a constant independent of $h$.
 \end{theorem}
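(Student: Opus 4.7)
The plan is to follow the classical It\^o plus Gronwall argument for Euler strong convergence, but with a sharper treatment of the taming perturbation using a tail-probability estimate rather than the naive pointwise bound $O(h^\alpha)$.

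First I would establish two preliminary ingredients. (A) Uniform-in-$h$ moment bounds of every order: $\sup_{t\le T}\E|X(t)|^p + \sup_{t\le T}\E|\hat X_h(t)|^p \le C_p$. For $X$ this is classical from Assumption \ref{SEA}(a); for $\hat X_h$ one applies It\^o's formula to $|\hat X_h(t)|^{2p}$ and uses the inherited one-sided growth $x\cdot b^{\xi,h}(t,x)\le K(1+|x|^2)$ (the taming multiplies $b^\xi$ by a positive scalar, so the one-sided bound in Assumption \ref{SEA}(a) survives), together with $|\sigma|^2\le K(1+|x|^2)$ and Gronwall. (B) One-step regularity $\E|\hat X_h(t)-\hat X_h(\kappa(t))|^{2p}\le C_p h^p$ follows from the a priori bound $|b^{\xi,h}|\le 2h^{-\alpha}\le 2h^{-1/2}$ (valid for $h\le 1$) and Burkholder--Davis--Gundy.

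Second, the crucial ingredient is the taming-error estimate
\begin{equation*}
\E\left|b^\xi(t,\hat X_h(t))-b^{\xi,h}(t,\hat X_h(t))\right|^{2q}\le C_{N,q}\,h^N
\end{equation*}
valid for arbitrarily large $N$. This follows from the identity $b^\xi-b^{\xi,h}=b^\xi\psi(h^\alpha|b^\xi|)/(1+\psi(h^\alpha|b^\xi|))$, which vanishes on $\{h^\alpha|b^\xi|\le 1\}$. On the complement, Assumption \ref{SEA}(b) gives a polynomial-in-$|\hat X_h|$ pointwise bound on the integrand, while Markov's inequality applied to the high-moment estimates from (A) yields $\mathbb{P}(|b^\xi(t,\hat X_h)|>h^{-\alpha})\le C_r h^{\alpha r}$ for arbitrary $r$. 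H\"older's inequality then combines the pointwise bound with the rare-event probability to close the estimate.

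Finally, set $e(t):=X(t)-\hat X_h(t)$ and apply It\^o to $|e(t)|^2$. I split the drift difference $b(t,X(t))-b^{\xi,h}(\kappa(t),\hat X_h(\kappa(t)))$ into four pieces: a time-regularity piece $b(t,X(t))-b(\kappa(t),X(\kappa(t)))$, handled by Assumption \ref{SEA}(c); a spatial-drift piece $b(\kappa(t),X(\kappa(t)))-b(\kappa(t),\hat X_h(\kappa(t)))$, whose inner product with $e(\kappa(t))$ is bounded by the one-sided Lipschitz constant while the replacement $e(t)\mapsto e(\kappa(t))$ is absorbed via Young's inequality using the local-Lipschitz bound in Assumption \ref{SEA}(b) and the moment bounds from (A); a random-batch fluctuation $b(\kappa(t),\hat X_h(\kappa(t)))-b^\xi(\kappa(t),\hat X_h(\kappa(t)))$, which is conditionally mean-zero in $\xi$ given $\mathcal F_{\kappa(t)}$ and therefore yields only an $O(h)$ squared contribution when integrated; and the taming piece, controlled to arbitrary order via the estimate above. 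The diffusion discrepancy uses the global Lipschitz property of $\sigma$. Gronwall then closes the bound and gives $\sup_t\E|e(t)|^2\le Ch$. The main obstacle is the taming step: to preserve the full $1/2$ order uniformly in $\alpha\in(0,1/2]$ one cannot afford the direct $O(h^\alpha)$ bound, and must genuinely exploit the fact that $\psi$ activates only on the polynomially rare event $\{|b^\xi|>h^{-\alpha}\}$. A secondary subtlety is that the moment estimate in (A) must be established for the tamed drift $b^{\xi,h}$ (not $b^\xi$), since the scheme is driven by the former; fortunately the one-sided and linear-growth structure needed for Gronwall is preserved by the taming multiplier.
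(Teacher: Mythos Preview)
Your proposal is correct and follows essentially the same strategy as the paper: It\^o's formula for $|e(t)|^2$, a decomposition of the drift discrepancy, the tail-probability argument for the taming error (the paper's Lemma~\ref{SE4}), and Gronwall. The paper's decomposition is slightly different---it places the random-batch fluctuation at $X(s)$ and applies the one-sided Lipschitz condition between $X(s)$ and $\hat X_h(s)$ at the running time $s$ (so it pairs directly with $e(s)$), then pushes the $s\to\kappa(s)$ and $\hat X_h(s)\to\hat X_h(\kappa(s))$ corrections to the numerical side---whereas you freeze to $\kappa(t)$ first, apply one-sided Lipschitz at the grid point, and absorb the $e(t)\mapsto e(\kappa(t))$ replacement via Young with a polynomial weight. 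Both routes close; the paper's avoids the weighted replacement, yours makes the random-batch cancellation slightly cleaner.

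One point to tighten in (A): It\^o's formula gives $\hat X_h(s)\cdot b^{\xi,h}(\kappa(s),\hat X_h(\kappa(s)))$ with mismatched arguments, so the inherited one-sided growth $x\cdot b^{\xi,h}(t,x)\le K(1+|x|^2)$ does not apply directly. The paper (Lemma~\ref{SE2}) handles this by splitting off $(\hat X_h(s)-\hat X_h(\kappa(s)))\cdot b^{\xi,h}$ and using the tamed bound $|b^{\xi,h}|\le Ch^{-\alpha}$ together with an internal version of your (B); you have all the pieces but should make this step explicit.
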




Before proving Theorem \ref{thm:strong}, let us give some useful lemmas in the strong error analysis.        
 By Assumptions \ref{SEA} we can derive the moment bounds for the exact solution \eqref{SDE}. The proof is standard and we refer the readers to \cite[Lemma 3.3]{sabanis2013note} or \cite[Lemma 3.1]{wang2024thesis}.
        \begin{lemma}\label{SE1}
            Suppose Assumptions \ref{SEA} and \ref{ass:initial} hold. Fix $p\geq 1$ and $T>0$. Then there exists a positive constant $C$ that depends on $T, K, p, \mathbb{E}(| {X}(0)|^p)$ such that:
		\begin{equation}
			\sup_{0\leq t \leq T} \mathbb{E}| {X}(t)|^p \leq C.
		\end{equation}
        \end{lemma}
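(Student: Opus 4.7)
The plan is to bound $\mathbb{E}|X(t)|^p$ by applying It\^o's formula to a regularized version of $|x|^p$, namely $f(x) = (1+|x|^2)^{p/2}$, which is $C^2$ and avoids the lack of smoothness at the origin. It suffices to treat $p \geq 2$, since the case $p \in [1,2)$ then follows from Jensen's inequality.

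First, I would introduce a localizing sequence of stopping times $\tau_R := \inf\{t \geq 0 : |X(t)| \geq R\}$ to handle the fact that the drift grows superlinearly and the stochastic integral need not be a true martingale a priori. Applying It\^o's formula to $f(X(t\wedge\tau_R))$ yields
\begin{equation*}
f(X(t\wedge\tau_R)) = f(X(0)) + \int_0^{t\wedge\tau_R} \mathcal{L}f(s,X(s))\, ds + M_{t\wedge\tau_R},
\end{equation*}
where $M$ is a local martingale and the generator action is
\begin{equation*}
\mathcal{L}f(s,x) = p(1+|x|^2)^{p/2-1}\, x\cdot b(s,x) + \tfrac{p}{2}(1+|x|^2)^{p/2-1}\mathrm{tr}(\sigma\sigma^T) + \tfrac{p(p-2)}{2}(1+|x|^2)^{p/2-2}|\sigma^T x|^2.
\end{equation*}

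Second, I would invoke Assumption \ref{SEA}(a): the monotone-type bound $x\cdot b(s,x) \leq K(1+|x|^2)$ together with $|\sigma(s,x)|^2 \leq K(1+|x|^2)$ gives a linear bound $\mathcal{L}f(s,x) \leq C_p(1+|x|^2)^{p/2} = C_p f(x)$, where $C_p$ depends only on $p$, $K$, and $d$. Note that the monotone condition is what rescues us here even though $b$ itself is superlinear: only $x\cdot b$, not $|b|$, appears in $\mathcal{L}f$. Taking expectations kills the (bounded, hence true) martingale term stopped at $\tau_R$, giving
\begin{equation*}
\mathbb{E}f(X(t\wedge\tau_R)) \leq \mathbb{E}f(X(0)) + C_p\int_0^t \mathbb{E}f(X(s\wedge\tau_R))\, ds.
\end{equation*}
Gronwall's inequality then yields $\mathbb{E}f(X(t\wedge\tau_R)) \leq e^{C_p T}\mathbb{E}f(X(0))$ uniformly in $R$ and $t\in[0,T]$. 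Assumption \ref{ass:initial} ensures $\mathbb{E}f(X(0)) < \infty$. Finally, sending $R\to\infty$ and using Fatou's lemma (since $\tau_R \to \infty$ a.s. by non-explosion, which itself follows from the same estimate), I obtain the desired uniform bound $\sup_{0\leq t\leq T}\mathbb{E}|X(t)|^p \leq C$.

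The main obstacle is really a bookkeeping one: ensuring that the stopping-time truncation is legitimate and that $\tau_R \to \infty$ almost surely so Fatou applies in the limit. This is handled by first observing the bound holds with $\tau_R$ in place, which in particular implies $\mathbb{P}(\tau_R \leq T) \to 0$ as $R \to \infty$ by Markov's inequality, hence global existence of a strong solution and the uniform moment bound. Because the proof is essentially the classical monotone-coefficient moment estimate, no subtlety arises from the particular nonlinearity in $b$, and the argument proceeds as in \cite[Lemma 3.3]{sabanis2013note}.
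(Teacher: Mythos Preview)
Your proposal is correct and matches the standard argument the paper invokes: the paper does not give a proof of this lemma but simply cites \cite[Lemma 3.3]{sabanis2013note}, and its (suppressed) outline applies It\^o's formula to $|X(t)|^p$, uses Assumption~\ref{SEA}(a) to bound the generator, and concludes by Gr\"onwall. Your version is the same approach with the usual extra care (regularizing $|x|^p$ via $(1+|x|^2)^{p/2}$ and localizing with $\tau_R$), so there is nothing to add.
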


 Based on the moment bound above, one can easily obtain the following $L^p$ H\"older's continuity property. Note that they both has half order due to the existence of the Brownian motion. We also omit the proof since this is standard (see for example \cite[Lemma 3.1]{sabanis2013note} ) .
\begin{lemma}\label{SE3'}
		Suppose Assumption \ref{SEA} and \ref{ass:initial} hold. For any $p \geq 2$, there exists a positive constant $C$ that depends on $p, T, K, \mathbb{E}| {X}(0)|^2$ such that
		\begin{equation}
			\sup_{0\leq t\leq T} \mathbb{E}\big| X(t)- X(\kappa(t))\big|^p\leq Ch^{\frac{p}{2}} .
		\end{equation}
\end{lemma}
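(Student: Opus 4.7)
The plan is to write $X(t)-X(\kappa(t))$ in integral form using the SDE \eqref{SDE}, giving
\[
X(t)-X(\kappa(t))=\int_{\kappa(t)}^{t} b(s,X(s))\,ds+\int_{\kappa(t)}^{t}\sigma(s,X(s))\,dW_s,
\]
and to bound the two pieces separately in $L^p$. Since $t-\kappa(t)\le h$, the analysis reduces to estimating these short-time increments uniformly in $t\in[0,T]$.

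For the drift term I would apply Jensen's (or Hölder's) inequality to pull the $p$-th power inside the time integral, obtaining a bound of the form $h^{p-1}\int_{\kappa(t)}^{t}\mathbb{E}|b(s,X(s))|^p\,ds$. From Assumption \ref{SEA}(b) with $y=0$ and (c) with $s=t=0$, $b(s,x)$ enjoys a polynomial growth bound of the form $|b(s,x)|\le C(1+|x|^{\ell+1})$. Combined with the moment bound in Lemma \ref{SE1} applied with exponent $p(\ell+1)$, this produces $\mathbb{E}|b(s,X(s))|^p\le C$, and hence the drift contribution is $O(h^p)$.

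For the diffusion term I would use the Burkholder--Davis--Gundy inequality to dominate $\mathbb{E}|\int_{\kappa(t)}^{t}\sigma(s,X(s))\,dW_s|^p$ by $C_p\,\mathbb{E}\bigl(\int_{\kappa(t)}^{t}|\sigma(s,X(s))|^2\,ds\bigr)^{p/2}$. Another application of Hölder pulls the $p/2$ inside, and by Assumption \ref{SEA}(a), $|\sigma(s,x)|^2\le K(1+|x|^2)$, so combined with Lemma \ref{SE1} at exponent $p$ one obtains a bound $Ch^{p/2}$. Adding the two contributions, the diffusion term dominates and gives the claimed $Ch^{p/2}$.

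The only real technical point is tracking the exponents of the moment bounds so that Lemma \ref{SE1} is invoked with a finite, $p$- and $\ell$-dependent moment; this is possible because Lemma \ref{SE1} gives bounds for all $p\ge 1$ under Assumption \ref{ass:initial}. No obstruction arises — the argument is essentially the standard SDE increment estimate, and the author indeed says the proof is omitted for this reason.
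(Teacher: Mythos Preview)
Your proposal is correct and matches the paper's own (omitted) argument essentially line by line: split $X(t)-X(\kappa(t))$ into drift and diffusion integrals, bound the drift via H\"older and the polynomial growth of $b$ together with Lemma~\ref{SE1} to get $O(h^p)$, and bound the diffusion via BDG, the linear growth $|\sigma|^2\le K(1+|x|^2)$ from Assumption~\ref{SEA}(a), and Lemma~\ref{SE1} to get $O(h^{p/2})$. There is nothing to add.
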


We can also establish similar results for the numerical solution to \eqref{MTE}. 
        \begin{lemma}\label{SE2}
            Suppose Assumption \ref{SEA} and \ref{ass:initial} hold. Then for any $ p\geq 2$ there exists a positive constant $C$ that depends on $T, K, p, \mathbb{E}(| {X}(0)|^p)$ such that:
		\begin{equation}
			\sup_{h>0} \sup_{0\leq t\leq T} \mathbb{E}(| {\hat{X}_h}(t)|^p)\leq C.
		\end{equation}
        \end{lemma}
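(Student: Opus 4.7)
The plan is to apply Itô's formula to $V_p(x) := (1+|x|^2)^{p/2}$ along the continuous interpolation \eqref{MTE} and to exploit the key inequality $|b^{\xi,h}| \leq 2h^{-\alpha}$ from \eqref{eq:bhbound} together with the one-sided growth condition in Assumption \ref{SEA}(a). The crucial difficulty compared with the proof of Lemma \ref{SE1} is that the drift and diffusion inside the stochastic integrals are evaluated at $\kappa(s)$, not at $s$, so the monotone bound $x\cdot b^\xi(t,x)\le K(1+|x|^2)$ cannot be applied directly; the whole argument is a careful quantification of how much this mismatch can hurt.

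After applying Itô, the drift contribution takes the form
$$\int_0^t p(1+|\hat X_h(s)|^2)^{p/2-1}\,\hat X_h(s)\cdot b^{\xi,h}(\kappa(s),\hat X_h(\kappa(s)))\,ds.$$
I would split $\hat X_h(s) = \hat X_h(\kappa(s)) + \delta_h(s)$, with $\delta_h(s) := \hat X_h(s) - \hat X_h(\kappa(s))$. For the $\hat X_h(\kappa(s))$ piece, the taming only shrinks the positive part of $x\cdot b^\xi$ and makes its negative part less negative, hence $x\cdot b^{\xi,h}(t,x)\le K(1+|x|^2)$ is inherited from Assumption \ref{SEA}(a); combined with Young's inequality to dominate the weight $(1+|\hat X_h(s)|^2)^{p/2-1}$ by $V_p(\hat X_h(s)) + V_p(\hat X_h(\kappa(s)))$, this yields the linear-in-$V_p$ contribution suitable for Grönwall. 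The $\sigma$-trace terms produced by Itô are handled the same way using $|\sigma|^2\le K(1+|x|^2)$.

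For the $\delta_h$ piece, I would use $|b^{\xi,h}_{\kappa(s)}|\le 2h^{-\alpha}$ and the representation $\delta_h(s) = (s-\kappa(s))b^{\xi,h}_{\kappa(s)} + \sigma_{\kappa(s)}(W_s - W_{\kappa(s)})$ together with Assumption \ref{SEA}(a) on $\sigma$ to establish the conditional bound $\mathbb{E}[|\delta_h(s)|^q\mid\mathcal{F}_{\kappa(s)}] \le Ch^{q/2}(1+|\hat X_h(\kappa(s))|^q)$ for any $q\le 2p$ (the $h^{q(1-\alpha)}$ contribution from the drift part of $\delta_h$ is dominated by $h^{q/2}$ because $\alpha\le 1/2$). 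Applied to the cross term $(1+|\hat X_h(s)|^2)^{p/2-1}\delta_h(s)\cdot b^{\xi,h}_{\kappa(s)}$, a Cauchy–Schwarz/Young step then gives a bound of order $h\,(\mathbb{E}V_p(\hat X_h(s))+\mathbb{E}V_p(\hat X_h(\kappa(s))) + 1)$. The local-martingale part is removed by standard localization on $\tau_N := \inf\{s\ge 0 : |\hat X_h(s)|\ge N\}$ followed by Fatou.

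Collecting all estimates leads to $\mathbb{E}V_p(\hat X_h(t)) \le V_p(X_0) + C + C\int_0^t \mathbb{E}V_p(\hat X_h(s))\,ds$ uniformly in $h\in(0,1]$, and Grönwall's inequality closes the argument. The main obstacle, in my view, is the bookkeeping in the cross term so that every appearance of the taming bound $h^{-\alpha}$ is absorbed by a compensating power of $h$ coming either from the step size $(s-\kappa(s))\le h$ or from the diffusive scaling $\mathbb{E}|W_s - W_{\kappa(s)}|^q \lesssim h^{q/2}$ of the Brownian increment; the hypothesis $\alpha\le 1/2$ is exactly what ensures $h^{1-\alpha}\le h^{1/2}$, matching these diffusive scales and preventing the taming bound from inflating the moment estimate as $h\to 0$.
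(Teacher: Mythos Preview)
Your approach is essentially the same as the paper's: apply It\^o to a $p$th-power Lyapunov function, split $\hat X_h(s)=\hat X_h(\kappa(s))+\delta_h(s)$ in the drift term, use the inherited one-sided bound $x\cdot b^{\xi,h}(t,x)\le K(1+|x|^2)$ on the first piece and the taming bound $|b^{\xi,h}|\le Ch^{-\alpha}$ on the cross piece, then control $\delta_h$ via its explicit representation and close with Gr\"onwall. One minor imprecision: after Young's inequality the cross term contributes a quantity of order $h^{p(1-2\alpha)/4}$ (hence $O(1)$ when $\alpha=1/2$), not $O(h)$ as you state---but this is still uniformly bounded in $h$ for $\alpha\le 1/2$ and is exactly what is needed for the Gr\"onwall step, so the argument goes through unchanged.
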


        \begin{proof}
    Apply It\^o's formula to $|\hat{X}_h(t)|^p$ and take expectations, it holds that
\begin{multline*}			
\mathbb{E}[| {\hat{X}_h}(t)|^p]
		\le \mathbb{E}[| {X}(0)|^p]+\int_0^t \mathbb{E} \Big[p| {\hat{X}_h}(s)|^{p-2} \hat{X}_h(s) \cdot { b^{\xi, h}}\Big(\kappa(s), {\hat{X}_h}\big(\kappa(s)\big)\Big)\Big]ds\\
        +\int_0^t \mathbb{E}\left[\frac{p(p-1)}{2}\big| {\hat{X}_h}(s)\big|^{p-2} \big| {\sigma}\Big(s, {\hat{X}_h}\big(\kappa(s)\big)\Big)\big|^2\right] ds.
\end{multline*}
Assumption \ref{SEA} and the tamed bound for $ { b^{\xi, h}}$  imply that
\begin{multline*}
\hat{X}_h(s) \cdot { b^{\xi, h}}\Big(\kappa(s), {\hat{X}_h}\big(\kappa(s)\big)\Big)
=\hat{X}_h(\kappa(s)) \cdot { b^{\xi, h}}\Big(\kappa(s), {\hat{X}_h}\big(\kappa(s)\big)\Big)+\\
(\hat{X}_h(s)-\hat{X}_h\big(\kappa(s)\big) )\cdot { b^{\xi, h}}\Big(\kappa(s), {\hat{X}_h}\big(\kappa(s)\big)\Big)
\le K(1+| {\hat{X}_h}(\kappa(s))|^2)+Ch^{-\alpha}|\hat{X}_h(s)-\hat{X}_h\big(\kappa(s)\big)|.
\end{multline*}
Moreover, $| {\sigma}(s, {\hat{X}_h}(\kappa(s)))|^2\le K(1+| {\hat{X}_h}(\kappa(s))|^2)$. Then applying Young's inequality, 
\begin{multline*}
\mathbb{E}[| {\hat{X}_h}(t)|^p]\le  \mathbb{E}[| {X}(0)|^p]+C+C\int_0^t \sup_{0\leq u\leq s }\mathbb{E}(| {\hat{X}_h}(u)|^p)ds\\
			+Ch^{-\frac{\alpha p}{2}}\int_0^t \mathbb{E}| {\hat{X}_h}(s)- {\hat{X}_h}(\kappa(s))|^{\frac{p}{2}}ds.
\end{multline*}

Using the definition of the process $\hat{X}_h$, Jensen's inequality and Burkh\"{o}lder-Davis-Gundy (BDG) inequality \cite{burkholder1971maximal} 
, it holds that
\begin{equation}\label{eq:hatXhcontinuity}
\begin{aligned}
			&\int_0^t \mathbb{E}\Big[\big| {\hat{X}_h}(s)- {\hat{X}_h}(\kappa(s))\big|^{\frac{p}{2}}\Big]ds\\
			\leq & C \int_0^t \mathbb{E}\Big[\big|\int_{\kappa(s)}^s  { b^{\xi, h}}(\kappa(u), {\hat{X}_h}(\kappa(u)))du\big|^{\frac{p}{2}}\Big]+\mathbb{E}\Big[\big(\int_{\kappa(s)}^{s}| {\sigma}(\kappa(u), {\hat{X}_h}(\kappa(u)))|^2du\big)^{\frac{p}{4}}\Big] ds. \\
		\end{aligned}
\end{equation}
For the former, the tamed bound for $b^{\xi,h}$ is used again:
\begin{equation*}
    \int_0^t \mathbb{E}\Big[\big|\int_{\kappa(s)}^s  { b^{\xi, h}}(\kappa(u), {\hat{X}_h}(\kappa(u)))du\big|^{\frac{p}{2}}\Big]ds \leq Ch^{\frac{p(1-\alpha)}{2}}.
\end{equation*}
		For the latter, we use condition (a) in Assumption \ref{SEA} to obtain 
\begin{equation*}
\int_0^t\mathbb{E}\left[\left(\int_{\kappa(s)}^{s}| {\sigma}(\kappa(u), {\hat{X}_h}(\kappa(u)))|^2du\right)^{\frac{p}{4}}\right] ds
\leq Ch^{\frac{p}{4}}\left(1 + \int_0^t \sup_{0\leq u\leq s}\mathbb{E}[| {\hat{X}_h}(u)|^p]ds\right).
\end{equation*}
Combining all the above, one has
\begin{equation*}
\mathbb{E}[| {\hat{X}_h}(t)|^p] \leq C\left(1+\mathbb{E}[| {X}(0)|^2]+\int_0^t\sup_{0\leq u \leq s}\mathbb{E} [| {\hat{X}_h}(u)|^2]ds \right).
\end{equation*}
Clearly, the left hand side can be replaced by $\sup_{0\leq s\leq t}\mathbb{E}[| {\hat{X}_h}(s)|^p]$ since the right hand side is nondecreasing. The claim then follows by Gr\"onwall's inequality.
		\end{proof}

    \begin{lemma}\label{SE3}
		Suppose Assumption \ref{SEA} and \ref{ass:initial} hold. For any $p \geq 2$, there exists a positive constant $C$ that depends on $p, T, K, \mathbb{E}[| {X}(0)|^2]$ such that
		\begin{equation}
			\sup_{0\leq t\leq T} \mathbb{E}[| {\hat{X}_h}(t)- {\hat{X}_h}(\kappa(t))|^p]\leq Ch^{\frac{p}{2}} .
		\end{equation}
        \end{lemma}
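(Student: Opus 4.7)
The plan is to estimate the increment $\hat{X}_h(t) - \hat{X}_h(\kappa(t))$ directly from the defining SDE \eqref{MTE} of the scheme. Writing
$$\hat{X}_h(t) - \hat{X}_h(\kappa(t)) = \int_{\kappa(t)}^{t} b^{\xi,h}\big(\kappa(u), \hat{X}_h(\kappa(u))\big)\, du + \int_{\kappa(t)}^{t} \sigma\big(\kappa(u), \hat{X}_h(\kappa(u))\big)\, dW_u,$$
I split into a drift part and a diffusion part, and apply the elementary inequality $(a+b)^p \le 2^{p-1}(a^p+b^p)$ so that it suffices to bound each part's $p$-th moment by $Ch^{p/2}$ uniformly in $t \in [0,T]$.

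For the drift term, I would apply Jensen's inequality to get
$$\mathbb{E}\Big|\int_{\kappa(t)}^{t} b^{\xi,h}\big(\kappa(u), \hat{X}_h(\kappa(u))\big)\, du\Big|^{p} \le h^{p-1} \int_{\kappa(t)}^{t} \mathbb{E}\big|b^{\xi,h}\big(\kappa(u), \hat{X}_h(\kappa(u))\big)\big|^{p} du,$$
and then invoke the uniform tamed bound \eqref{eq:bhbound}, which gives $|b^{\xi,h}| \le 2h^{-\alpha}$. This yields an upper bound of $C h^{p-1} \cdot h \cdot h^{-\alpha p} = C h^{p(1-\alpha)}$. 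Since $\alpha \in (0, 1/2]$, and we may restrict to $h \le 1$ (otherwise the estimate follows trivially by adjusting the constant using Lemma \ref{SE2}), one has $h^{p(1-\alpha)} \le h^{p/2}$, which is the desired order.

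For the diffusion part, the BDG inequality and Jensen's inequality give
$$\mathbb{E}\Big|\int_{\kappa(t)}^{t} \sigma\big(\kappa(u), \hat{X}_h(\kappa(u))\big)\, dW_u\Big|^{p} \le C\, \mathbb{E}\Big(\int_{\kappa(t)}^{t} |\sigma\big(\kappa(u), \hat{X}_h(\kappa(u))\big)|^{2} du\Big)^{p/2} \le C h^{p/2-1} \int_{\kappa(t)}^{t} \mathbb{E}\big[1 + |\hat{X}_h(\kappa(u))|^{p}\big]\, du,$$
where Assumption \ref{SEA}(a) was used in the last step. By the moment bound of Lemma \ref{SE2}, the right-hand side is at most $C h^{p/2-1} \cdot h = C h^{p/2}$. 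Combining the two estimates yields the claim.

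There is no real obstacle here, as this is essentially a routine $L^p$ increment estimate. The only subtlety is the drift bound: without taming, $\mathbb{E}|b(\kappa(u), \hat{X}_h(\kappa(u)))|^p$ would need to be controlled via polynomial growth and the $p\ell$-moment of $\hat{X}_h$, which complicates the argument; the uniform bound $|b^{\xi,h}| \le 2h^{-\alpha}$ from the taming makes this step trivial and ensures the correct $h$-dependence as long as $\alpha \le 1/2$, which is precisely the regime assumed.
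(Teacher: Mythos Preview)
Your proof is correct and essentially identical to the paper's: both split the increment into drift and diffusion parts, use the tamed bound $|b^{\xi,h}|\le Ch^{-\alpha}$ together with Jensen for the drift (yielding $Ch^{p(1-\alpha)}$), and BDG plus Assumption~\ref{SEA}(a) and the moment bound of Lemma~\ref{SE2} for the diffusion (yielding $Ch^{p/2}$). The paper simply refers back to the analogous computation \eqref{eq:hatXhcontinuity} carried out inside the proof of Lemma~\ref{SE2} rather than repeating it.
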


        \begin{proof}
		Similarly with \eqref{eq:hatXhcontinuity}, we have
\begin{equation*}
			\mathbb{E}[| {\hat{X}_h}(t)- {\hat{X}_h}(\kappa(t))|^p] 
			\leq  Ch^{p(1-\alpha)}+Ch^{\frac{p}{2}}\left(1+\sup_{\kappa(t)\leq s \leq t}\mathbb{E}| {\hat{X}_h}(\kappa(s))|^p\right).
\end{equation*}
Using the moment bound for $\hat{X}_h$ in Lemma \ref{SE2}, and since $\alpha \in (0,\frac{1}{2}]$, it holds that
\begin{equation*}
\sup_{0\leq t\leq T} \mathbb{E}[| {\hat{X}_h}(t)- {\hat{X}_h}(\kappa(t))|^p]\leq Ch^{\frac{p}{2}} .
\end{equation*}
\end{proof}

The following lemma echoes our motivation, as the drift term of the MTE scheme is near to that of the original in expectation. Moreover, it can be observed from this lemma that the rate of the taming error in our modified scheme can be arbitrarily high in $L^p$ sense.

 \begin{lemma}\label{SE4}
Suppose Assumption \ref{SEA} and \ref{ass:initial} hold. Fix $T>0$. Then for all $t\in [0,T]$, $\alpha \in (0,1/2]$ and $p,q \geq 1$, there exists a positive constant $C$ independent of $h$ such that
		\begin{equation}
			\mathbb{E}\left| {b^\xi}\left(\kappa(t), {\hat{X}_h}(\kappa(t))\right)- { b^{\xi, h}}\left(\kappa(t), {\hat{X}_h}(\kappa(t))\right)\right|^p\leq Ch^{q}.
		\end{equation}
\end{lemma}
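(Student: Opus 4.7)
\textbf{Proof plan for Lemma \ref{SE4}.} The strategy is to exploit the fact that $b^{\xi,h}$ agrees with $b^{\xi}$ on the set where $h^{\alpha}|b^{\xi}| \le 1$, so the pointwise difference is supported on a rare event whose probability can be made arbitrarily small in $h$ using the uniform moment bound from Lemma \ref{SE2}.

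First I would write out the difference explicitly. By the definition \eqref{eq:modifiedtameddrift},
\[
b^{\xi}(t,x) - b^{\xi,h}(t,x) = \frac{b^{\xi}(t,x)\,\psi(h^{\alpha}|b^{\xi}(t,x)|)}{1+\psi(h^{\alpha}|b^{\xi}(t,x)|)}.
\]
Since $\psi(r)=0$ for $r\le 1$, this difference vanishes on the event $A_h^c:=\{h^{\alpha}|b^{\xi}(\kappa(t),\hat X_h(\kappa(t)))|\le 1\}$. Moreover, using the trivial bound $\psi/(1+\psi)\le 1$, we have the pointwise estimate $|b^{\xi}-b^{\xi,h}|\le |b^{\xi}|$. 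Next, combining (b) of Assumption \ref{SEA} with $y=0$, (c) with $s=0$, and the assumption $\sup_{\xi}|b^{\xi}(0,0)|<\infty$, one obtains a polynomial bound $|b^{\xi}(s,x)|\le C(1+|x|^{\ell+1})$ uniformly in $s\in[0,T]$ and $\xi$.

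Then I would apply Hölder's inequality to separate the size of the integrand from the probability of the bad event:
\[
\mathbb{E}\big|b^{\xi}-b^{\xi,h}\big|^p
= \mathbb{E}\Big[\big|b^{\xi}-b^{\xi,h}\big|^p \mathbf{1}_{A_h}\Big]
\le \Big(\mathbb{E}\big|b^{\xi}(\kappa(t),\hat X_h(\kappa(t)))\big|^{2p}\Big)^{1/2}\,\mathbb{P}(A_h)^{1/2}.
\]
The first factor is bounded uniformly in $h$ by the polynomial bound above together with Lemma \ref{SE2} (which gives uniform moment bounds of all orders for $\hat X_h$). For the probability $\mathbb{P}(A_h)$, the polynomial bound gives
\[
A_h \subset \Big\{|\hat X_h(\kappa(t))|\ge c\,h^{-\alpha/(\ell+1)}\Big\}
\]
for some $c>0$, so by Markov's inequality, for any integer $r\ge 1$,
\[
\mathbb{P}(A_h)\le c^{-r} h^{\alpha r/(\ell+1)}\,\mathbb{E}\big|\hat X_h(\kappa(t))\big|^{r}\le C_r\, h^{\alpha r/(\ell+1)},
\]
using Lemma \ref{SE2} once more.

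Combining these pieces gives $\mathbb{E}|b^{\xi}-b^{\xi,h}|^p \le C_r\,h^{\alpha r/(2(\ell+1))}$, and one simply chooses $r$ sufficiently large (for instance $r\ge 2q(\ell+1)/\alpha$) so that the exponent exceeds $q$. I do not expect a real obstacle here: the key observation, that the modified tamed correction is nonzero only on a tail event whose probability is polynomially small by uniform moment bounds, is the whole point of introducing the cut-off $\psi$, and all other ingredients are already in place.
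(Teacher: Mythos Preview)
Your proposal is correct and follows essentially the same approach as the paper: restrict to the event $\{h^{\alpha}|b^{\xi}|>1\}$, apply H\"older's inequality to split off the probability of that event, bound the moment factor via the polynomial growth of $b^{\xi}$ and Lemma \ref{SE2}, and control the probability by Markov's inequality with an arbitrarily high exponent. The only cosmetic differences are that the paper applies Markov directly to $|b^{\xi}(\kappa(t),\hat X_h(\kappa(t)))|$ (whose moments are bounded for the same reasons) rather than first passing to an event on $|\hat X_h|$, and it uses the slightly sharper pointwise bound $|b^{\xi}-b^{\xi,h}|\le h^{\alpha}|b^{\xi}|^{2}$ on the event (since $|b^{\xi}|\ge h^{-\alpha}$ there) to pick up an extra $h^{p\alpha}$ factor---harmless since $q$ is arbitrary.
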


        \begin{proof}
		A straight calculation and  H\"{o}lder's inequality gives
\begin{equation*}
\begin{aligned}
			& \mathbb{E}\left| {b^\xi}(\kappa(t), {\hat{X}_h}(\kappa(t)))- { b^{\xi, h}}(\kappa(t), {\hat{X}_h}(\kappa(t)))\right|^p\\ 
			\leq&  h^{p\alpha}\mathbb{E}\left[| {b^\xi}(\kappa(t), {\hat{X}_h}(\kappa(t)))|^{2p}\chi_{\{  h^\alpha | {b^\xi}(\kappa(t), {\hat{X}_h}(\kappa(t)))|\geq 1 \}} \right] \\
			\leq&  h^{p\alpha} \left(\mathbb{E}| {b^\xi}(\kappa(t), {\hat{X}_h}(\kappa(t)))|^{4p}\right)^{\frac{1}{2}} \left(\mathbb{P}(| {b^\xi}(\kappa(t), {\hat{X}_h}(\kappa(t)))|\geq  h^{-\alpha} )\right)^{\frac{1}{2}}.\\
		\end{aligned}
\end{equation*}
Using conditions (b), (c) in Assumption \ref{SEA} as well as the moment bound in Lemma \ref{SE2}, for any $m\geq 1$, one has
$\mathbb{E}\left| {b^\xi}(\kappa(t), {\hat{X}_h}(\kappa(t)))\right|^{m} \leq C$. For the other term, by Markov's inequality, for any $q \geq 1$,
\begin{equation*}
    \mathbb{P}( |{b^\xi}(\kappa(t), {\hat{X}_h}(\kappa(t)))|>h^{-\alpha})\leq \frac{\mathbb{E}|( {b^\xi}(\kappa(t), {\hat{X}_h}(\kappa(t))|^\frac{2q}{\alpha}}{( h^{-\alpha})^\frac{2q}{\alpha}}\leq Ch^{q}.
\end{equation*}
The claim then holds after combining all the above.
\end{proof}
According to this lemma, it is clear that the modified tamed scheme can preserve the order of the original order of the scheme. Now, we are ready to prove the strong convergence.

\begin{proof}[Proof of Theorem \ref{thm:strong}]
Denote $\chi_h(t):=X(t)-\hat{X}_h(t)$, and
\begin{equation*}
    {\beta_h}(t):= {b}(t, {X}(t))- { b^{\xi, h}}(\kappa(t), {\hat{X}_h}(\kappa(t)),\quad	    {\alpha_h}(t):= {\sigma}(t, {X}(t))- {\sigma}(\kappa(t), {\hat{X}_h}(\kappa(t))).
\end{equation*}
By definition, $\chi_h(0)=0$ and $\chi_h$ satisfies for $t\in (0, T]$ that
\begin{equation*}
d {\chi_h}(t)= {\beta_h}(t)dt+ {\alpha_h}(t)d {W}_t .
\end{equation*}
By It\^o's formula, one has
\begin{equation*}
    \mathbb{E}| {\chi_h}(t)|^2=\int_0^t\mathbb{E}[2 {\chi_h}(s) \cdot {\beta_h}(s)]ds+\int_0^t\mathbb{E}| {\alpha_h}(s)|^2ds=: R_1+R_2.
\end{equation*}
   		
The error term $R_2$ is easy to control. In fact, by conditions (a), (c) in Assumption \ref{SEA} and the H\"older's continuity result in Lemma \ref{SE3}, one has:
\begin{equation*}
\begin{split}
  \int_0^t \mathbb{E}[| {\alpha_h}(s)|^2]ds 
   			\leq &C \left(h^{2}+\int_0^t \mathbb{E}[| {X}(s)- {\hat{X}_h}(s)|^2] ds +\int_0^t \mathbb{E}[| {\hat{X}_h}(s)- {\hat{X}_h}(\kappa(s))|^2]ds\right)\\
   			\leq &C \int_0^t\mathbb{E}| {\chi_h}(s)|^2ds+Ch.\\  
\end{split}
\end{equation*}

Next, we estimate the error term $R_1$. 
We split ${\beta_h} (s)$ into following five parts:
\begin{equation}\label{eq:betahsplit}
\begin{aligned}
    \beta_h(s) &= \big( {b}(s, {X}(s))- {b^\xi}(s, {X}(s))\big)
    + \big( {b^\xi}(s, {X}(s))- {b^\xi}(s, {\hat{X}_h}(s))\big)\\
   &\quad + \big( {b^\xi}(s, {\hat{X}_h}(s))- {b^\xi}(\kappa(s), {\hat{X}_h}(s))\big)
     + \big( {b^\xi}(\kappa(s), {\hat{X}_h}(s))- {b^\xi}(\kappa(s), {\hat{X}_h}(\kappa(s)))\big)\\
    &\quad+ \big( {b^\xi}(\kappa(s), {\hat{X}_h}(\kappa(s)))- { b^{\xi, h}}(\kappa(s), {\hat{X}_h}(\kappa(s)))\big) =: \sum_{i=1}^5 \tilde{I}_i.
\end{aligned}
\end{equation}
Define $I_i := 2\int_0^t \mathbb{E}[\chi_h(s) \cdot \tilde{I}_i]ds$, $i=1,2,3,4,5$. We separately estimate these five terms.

Condition (d) of Assumption \ref{SEA} gives the estimate for the term $I_2$:
\begin{equation}\label{eq:I2estimate}
    I_2\leq C\int_0^t \mathbb{E}| {\chi_h}(s)|^2ds.
\end{equation}

By Young's inequality, 
\begin{multline*}
I_3+I_4\leq  \int_0^t \mathbb{E}| {\chi_h}(s)|^2ds +\int_0^t \E| {b^\xi}(s, {\hat{X}_h}(s))- {b^\xi}(\kappa(s), {\hat{X}_h}(s))|^2]ds\\
+\int_0^t \mathbb{E}\Big| {b^\xi}(\kappa(s), {\hat{X}_h}(s))- {b^\xi}(\kappa(s), {\hat{X}_h}\big(\kappa(s))\big)\Big|^2ds.
\end{multline*}
The second term on the right hand side is controlled by $Ch^2$	using Assumption \ref{SEA} (c). The last term on the right hand side is estimated by Assumption \ref{SEA} (b) combined with Lemma \ref{SE2}, Lemma \ref{SE3} and H\"{o}lder's inequality:
\begin{equation*}
\begin{split}
& \int_0^{t} \mathbb{E}\left[(1+| {\hat{X}_h}(s)|^{2\ell}+| {\hat{X}_h}(\kappa(s))|^{2\ell})| {\hat{X}_h}(s)- {\hat{X}_h}(\kappa(s))|^2\right]ds\\
&\leq  C\int_0^{t} (\mathbb{E}[| {\hat{X}_h}(s)- {\hat{X}_h}(\kappa(s))|^4])^{\frac{1}{2}}ds\le  Ch.
\end{split}
\end{equation*}


By Lemma \ref{SE4}, the improved taming error in $I_5$ can be estimated by
\begin{equation*}
I_5\leq \int_0^t \mathbb{E}[| {\chi_h}(s)|^2]ds+Ch^q,
\end{equation*}
for $q\ge 1$ arbitrarily large.

Finally, let us estimate the term $I_1$.
Denote 
\begin{equation*}
    {\varphi_\xi}(t, {X}(t)) := {b}(t, {X}(t))- {b^\xi}(t, {X}(t)).
\end{equation*}
Clearly, since for any $t\geq 0$, $X(t)$ is independent of the random batch $\xi$, 
\begin{equation*}
    \mathbb{E}[ {\varphi_\xi}(t, {X}(t))]= {0}.
\end{equation*}
Moreover, the moment bound of ${\varphi_\xi}(t,X(t))$ can be derived by Lemma \ref{SE1} and condition (c) in Assumption \ref{SEA}. Also,
\begin{equation*}
    \mathbb{E}[ {\chi_h}(\kappa(t)) \cdot {\varphi_\xi}(t, {X}(t))]=0.
\end{equation*}
Then one has:
\begin{equation*}
    \begin{aligned}
   	I_1=&\int_0^t\mathbb{E}[ {\chi_h}(s) \cdot {\varphi_\xi}(s, {X}(s))]ds=\int_0^t\mathbb{E}[( {\chi_h}(s)- {\chi_h}(\kappa(s)))\cdot {\varphi_\xi}(s, {X}(s))]ds\\
   			=& \int_0^t \mathbb{E}\left[(\int_{\kappa(s)}^{s}  {\beta_h}(r)dr)\cdot  {\varphi_\xi}(s, {X}(s))\right]ds+\int_0^t \mathbb{E}\left[(\int_{\kappa(s)}^{s} { \alpha_h}(r)d {W}(r))\cdot {\varphi_\xi}(s, {X}(s))\right]ds\\
      =&I_{1,1} + I_{1,2}.
   		\end{aligned}
\end{equation*}
Using the polynomial bounds of $b$ and $b^{\xi}$ (setting $y=0$ in Assumption \ref{SEA} (b) and the boundedness of $b^{\xi}(t, 0)$ by (c)), one has 
for any $r\le T$ and $s\le T$ that 
\[
\E\left[ | {\beta_h}(r)\cdot  {\varphi_\xi}(s, {X}(s))|\right] \le C(T)
\quad \Longrightarrow \quad I_{1,1} \leq Ch.
\]
Note that $\beta_h \varphi_{\xi}$ is essentially $|b-b^{\xi}|^2$ so the consistency condition $\E b^{\xi}=b$ would not help to improve the rate and the $Ch$ order should be optimal for this term.

For $I_{1,2}$, we note that
\begin{equation*}
    \mathbb{E}\left[\int_{\kappa(s)}^{s}  {\alpha_h}(r)d {W}(r)\cdot \varphi_\xi(\kappa(s), {X}(\kappa(s)))\right]=0.
\end{equation*}
 Combining this with H\"{o}lder's inequality, it holds that
\begin{multline*}
\mathbb{E}\left[\int_{\kappa(s)}^{s}  {\alpha_h}(r)d {W}(r)\cdot {\varphi_\xi}(s, {X}(s))\right]\\ 
\leq  \left(\mathbb{E}\left[|\int_{\kappa(s)}^{s}  {\alpha_h}(r)d {W}(r)|^2\right]\right)^{\frac{1}{2}}\left(\mathbb{E}\Big[\Big| {\varphi_\xi}(s, {X}(s))- {\varphi_\xi}(\kappa(s), {X}(\kappa(s)))\Big|^2\Big]\right)^{\frac{1}{2}}.
\end{multline*}
 By the definition of $ {\alpha_h}$, conditions (a) in Assumption \ref{SEA}, and Lemma \ref{SE1}, \ref{SE2}, we have
\begin{equation*}
\begin{aligned}
 \mathbb{E}\left[|\int_{\kappa(s)}^{s}  {\alpha_h}(r)d {W}(r)|\right]^2 \leq    \mathbb{E}\left[\int_{\kappa(s)}^{s}C(1+| {X}(r)|^2+| {\hat{X}_h}(\kappa(r))|^2)dr\right] \leq  Ch.
\end{aligned}
\end{equation*}
By the definition of $ {\varphi_\xi}$ and similar estimate to $I_4$, one has:
\begin{equation*}
\mathbb{E}[| {\varphi_\xi}(s, {X}(s))- {\varphi_\xi}(\kappa(s), {X}(\kappa(s)))|^2] \leq  C(\mathbb{E}| {X}(s)- {X}(\kappa(s))|^4)^{\frac{1}{2}}+Ch^2 \leq Ch.
\end{equation*}
Hence,  $I_{1,2} \leq Ch$.
Note that the estimate of $I_{1,2}$ is not optimal as one does not make use of the smallness of $\alpha_h$ itself but there is no benefit to do the sharp estimate here.   		
   		
Combining the estimates for $I_1$ - $I_5$, we obtain
\begin{equation*}
        \mathbb{E}| {\chi_h}(t)|^2\leq C\int_0^t\mathbb{E}| {\chi_h}(s)|^2ds+Ch.
\end{equation*}
Then Gr\"onwall's inequality gives the result.
\end{proof}

\subsection{Weak convergence rate of the MTE scheme}\label{sec:weakconvergence}

In this section, we show that unlike existing tamed schemes, our MTE scheme preserve the same weak convergence rate (1st order) as that of the Euler's method. As discussed in Section \ref{sec:intro} and Section \ref{sec:scheme}, the introduction of the cut-off function in the tamed drift largely improves the taming error (see Lemma \ref{SE4} above), thus improving the weak convergence. Our analysis in this section begins with a one-step estimation (see for instance \cite{milstein2004stochastic, wang2024weak}). 

In terms of the weak error, besides the Assumptions \ref{SEA}, we need the following additional polynomial growth assumption for the drift $b$ and its (spatial) derivatives. Moreover, for the diffusion term, we will assume the elliptic condition ($\sigma \sigma^T$ is nondegenerate) as well as the boundedness for the derivatives of $\sigma$. These conditions are common in literature and not restrictive.

\begin{assumption}\label{WEA}
\begin{enumerate} [label=(\alph*)]
 \item  For any $T>0$, there exists positive constants $C$, and $\ell \geq 2$ such that for all $x \in \mathbb{R}^d$ and $t\le T$,
 \begin{equation*}
     |\nabla^2 b(t,x)|+|\nabla^3 b(t,x)|\leq C(1+|x|^{\ell}).
 \end{equation*}
\item For any $T>0$, there exists a positive constant $\lambda$ such that for all $x \in \mathbb{R}^d$ and $t\le T$,  $\Lambda(t,x)\succeq \lambda I$, where $\Lambda := \sigma\sigma^T$.

\item For any $T>0$, there exists positive constant $L$ such that for all $x \in \mathbb{R}^d$ and $t\le T$,
\begin{equation*}
      |\nabla \Lambda (t,x)| +|\nabla^2 \Lambda(t,x)| + |\nabla^3 \Lambda(t,x)| \le L.
\end{equation*}
\end{enumerate}
\end{assumption}
We remark that the extra conditions in Assumption \ref{WEA} are only used to ensure the polynomial growth condition of $v$ defined in \eqref{eq:vdef} below. Hence, these conditions are not imposed on the random approximation $b^{\xi}$. We denote by $C_p^\infty(\mathbb{R}^d)$ all smooth functions whose derivative of any order has a polynomial upper bound.
\begin{theorem}\label{thm:MTEweakconv}
		Suppose Assumptions \ref{SEA}, \ref{ass:initial} and \ref{WEA} hold. Then for any test function $f\in C_p^\infty(\mathbb{R}^d)$, the MTE scheme with $\alpha \in (0,1/2]$ converges weakly to the exact solution of SDE with order $1$. Namely, for small time step $h$ and $X(t)$, $\hat{X}(t)$
        (recall their definitions in Section \ref{sec:scheme} 
        above) sharing the same Brownian motion, there exists a positive constant $C$ independent of the time step $h$ and random batch $\xi$ such that
  \begin{equation}
      \sup_{0\leq t\leq T}|\mathbb{E}[f( {X}(t))]-\mathbb{E}[f( {\hat{X}_h}(t))]|\leq Ch.
  \end{equation}
\end{theorem}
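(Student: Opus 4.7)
The plan is a standard backward-Kolmogorov/telescoping approach, tailored to the MTE scheme's three sources of error (drift freezing inside a step, the taming cutoff, and the random-batch substitution). Fix $t \in [0, T]$ and let
\begin{equation*}
u(s, x) := \mathbb{E}\bigl[f(X^{s, x}(t))\bigr], \quad 0 \le s \le t,
\end{equation*}
where $X^{s,x}$ denotes the solution of \eqref{SDE1} with initial condition $X(s) = x$. Under Assumptions \ref{SEA} and \ref{WEA}, standard parabolic theory gives a classical solution $u \in C^{1,3}([0,t]\times\mathbb{R}^d)$ of $\partial_s u + b \cdot \nabla u + \tfrac{1}{2} \Lambda : \nabla^2 u = 0$ with $u(t,\cdot) = f$, and $\partial_s u$ together with $\nabla^k u$ for $k \le 3$ admit polynomial bounds in $x$ uniform in $s$. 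The ellipticity of $\Lambda$, the boundedness of $\nabla^k \Lambda$ for $k \le 3$ from Assumption \ref{WEA}(b)-(c), and the polynomial bounds on $\nabla^k b$ from Assumption \ref{WEA}(a) are used precisely here.

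Applying It\^o to $u(s, \hat X_h(s))$ on $[0, t]$, using the PDE to cancel $\partial_s u$ against the generator applied at $(s, \hat X_h(s))$, and taking expectations gives
\begin{equation*}
\mathbb{E}[f(\hat X_h(t))] - \mathbb{E}[f(X(t))] = \sum_{k=0}^{N-1} \mathbb{E}\int_{T_k}^{T_{k+1}} \Bigl[\nabla u \cdot \Delta b(s) + \tfrac{1}{2}\Delta\Lambda(s):\nabla^2 u\Bigr]\, ds,
\end{equation*}
with $N = t/h$, $\Delta b(s) := b^{\xi, h}(\kappa(s), \hat X_h(\kappa(s))) - b(s, \hat X_h(s))$, $\Delta \Lambda(s)$ the analogous diffusion discrepancy, and $\nabla u, \nabla^2 u$ evaluated at $(s, \hat X_h(s))$. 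It suffices to show that each of the $N$ summands is $O(h^2)$.

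To bound a single summand I would decompose $\Delta b(s)$ exactly as in \eqref{eq:betahsplit}: (i) taming residual $b^{\xi,h} - b^\xi$ at $(\kappa(s), \hat X_h(\kappa(s)))$; (ii) random-batch bias $b^\xi - b$ at the same point; (iii) space-lag $b(\kappa(s), \hat X_h(\kappa(s))) - b(\kappa(s), \hat X_h(s))$; (iv) time-lag $b(\kappa(s), \hat X_h(s)) - b(s, \hat X_h(s))$. Piece (i) contributes $O(h^q)$ for any $q$ by Lemma \ref{SE4}. Piece (iv) is $O(h)$ in $L^2$ by Assumption \ref{SEA}(c) and Lemma \ref{SE2}, hence $O(h^2)$ after integration. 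Piece (iii) requires a second-order Taylor expansion in $x$: the quadratic remainder is $O(h)$ by Lemma \ref{SE3}, while the linear term $\nabla b \cdot (\hat X_h(\kappa(s)) - \hat X_h(s))$ is handled by further Taylor-expanding $\nabla u(s, \hat X_h(s))$ around $(T_k, \hat X_h(T_k))$ so that the Brownian part of the increment pairs with a frozen factor and vanishes in expectation, leaving only $O(h^2)$ contributions. The term $\Delta \Lambda(s)$ is handled identically using Assumption \ref{WEA}(c).

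The main obstacle is piece (ii). Naively $\|b^\xi - b\|_{L^2} = O(1)$, so Cauchy-Schwarz against $\nabla u$ yields only $O(h)$ per step and $O(1)$ in total, which is not enough. The fix is a freezing argument: since $\xi_k$ is independent of $\mathcal{F}_{T_k}$ and $\mathbb{E}[b^{\xi_k}] = b$, pairing the frozen factor $\nabla u(T_k, \hat X_h(T_k))$ with $(b^\xi - b)(T_k, \hat X_h(T_k))$ has zero expectation. The remaining residual involves the increment $\nabla u(s, \hat X_h(s)) - \nabla u(T_k, \hat X_h(T_k))$, which by It\^o decomposes into a drift term of $L^2$-size $O(s - T_k)$ and a stochastic integral against $W_s - W_{T_k}$; the stochastic integral still vanishes in expectation after conditioning on $\sigma(\mathcal{F}_{T_k}, \xi_k)$, because $(b^\xi - b)(T_k, \hat X_h(T_k))$ is measurable with respect to that $\sigma$-field and $W_s - W_{T_k}$ is independent of it. Only the drift part survives and yields $O(h)$ in the integrand, hence $O(h^2)$ on $[T_k, T_{k+1}]$. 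Summing $N = O(1/h)$ steps and taking the supremum over $t \in [0, T]$ completes the proof.
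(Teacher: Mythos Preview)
Your proposal is correct and uses a different but equally standard route from the paper. You run It\^o continuously on $u(s,\hat X_h(s))$ and cancel against the backward Kolmogorov PDE (Talay--Tubaro style), then decompose the resulting integrand into taming, random-batch, space-lag and time-lag pieces, each shown to be $O(h)$ pointwise in $s$. The paper instead telescopes at the grid points (Milstein style): with $v(t,x)=\mathbb E[f(X(T))\mid X(t)=x]$ it writes the weak error as $\sum_i \mathbb E[v(T_{i+1},X_{T_i,\hat X_i}(T_{i+1}))-v(T_{i+1},\hat X_{i+1})]$, Taylor-expands $v$ in space only, and pushes all the work into a single one-step lemma (Lemma~\ref{lmm:localmoments}) giving $\mathbb E|X(T_{k+1})-\hat X(T_{k+1})|^{2p}\le Ch^{2p}$ and $\mathbb E[Z\cdot(X(T_{k+1})-\hat X(T_{k+1}))]\le C(\mathbb E|Z|^2)^{1/2}h^2$ for $\mathcal F_k$-measurable $Z$, when both processes start from $\hat X_i$.

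What each buys: your continuous approach makes the role of each error source transparent and reuses the decomposition~\eqref{eq:betahsplit} from the strong-error proof verbatim; it does however require $\partial_s\nabla u$ (hence $\nabla^3 u$) with polynomial growth, and needs the second freezing step for piece~(iii) that you correctly sketch. The paper's approach packages the one-step analysis once and for all in Lemma~\ref{lmm:localmoments}, so the Taylor expansion of $v$ is shorter, and the consistency $\mathbb E_\xi b^\xi=b$ and the taming Lemma~\ref{SE4} are invoked only inside that lemma. Both routes need Lemma~\ref{PL} (polynomial bounds on $\nabla^k v$, $k\le 3$) in exactly the same way, and both exploit the independence of $\xi_k$ from $\mathcal F_{T_k}$ for the random-batch cancellation you describe.
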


The proof framework is similar to that of Milstein (see in \cite[Section 2.2.1]{milstein2004stochastic} ). Set 
\begin{equation}\label{eq:vdef}
 v(t, {x}, T):=\mathbb{E}[f( {X}(T))| {X}(t)= {x}],
\end{equation}
 where $f\in C_p^\infty(\mathbb{R}^d)$. Notably, we will need the polynomial growth of $v$. 
\begin{lemma}\label{PL}
 Suppose Assumptions \ref{SEA}, \ref{ass:initial} and \ref{WEA} hold. Let $f \in C_p^\infty(\mathbb{R}^d)$, and $v(t, {x}, T)$ defined as above. Then the 0th - 3rd order spatial derivatives have polynomial upper bounds, namely, there exist constants $C_v > 0$ and $\ell_v \geq 1$ such that
 \begin{equation}
     |\nabla^k v| \leq C_v (1 + |x|^{\ell_v}),\quad k=0,1,2,3.
\end{equation}
 \end{lemma}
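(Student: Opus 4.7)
The plan is to use the Feynman--Kac representation $v(t,x,T)=\E[f(X^{t,x}(T))]$, differentiate under the expectation, and express $\nabla_x^k v$ through the variation processes $J_k(s) := \nabla_x^k X^{t,x}(s)$ for $k=1,2,3$. Concretely, by the chain rule one has
\[
\nabla_x v = \E\!\left[\nabla f(X^{t,x}(T))\,J_1(T)\right],
\]
and analogous formulas for $\nabla_x^2 v$ and $\nabla_x^3 v$ that involve $J_1,J_2,J_3$ combined with derivatives of $f$ up to order $3$ evaluated at $X^{t,x}(T)$. Since $f\in C_p^{\infty}(\R^d)$ and the state moments $\E|X^{t,x}(s)|^p\le C(1+|x|^p)$ follow from the one-sided Lipschitz bound in Assumption \ref{SEA}(a) applied to $X^{t,x}$ (this is the standard Gr\"onwall argument for $|X^{t,x}(s)|^{2p}$), the lemma reduces to establishing polynomial-in-$|x|$ moment bounds for $J_1,J_2,J_3$.

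For $J_1$, differentiating the SDE \eqref{SDE} in the initial condition gives the linear SDE
\[
dJ_1(s) = \nabla b(s,X^{t,x}(s))\,J_1(s)\,ds + \sum_{j} \nabla \sigma_{\cdot j}(s,X^{t,x}(s))\,J_1(s)\,dW^j_s, \qquad J_1(t)=I.
\]
I would apply It\^o's formula to $|J_1(s)|^{2p}$. Assumption \ref{SEA}(b) yields the one-sided bound $e^{\top}\nabla b(t,x)e\le L|e|^2$ (differentiating the monotonicity condition), and Assumption \ref{WEA}(c) gives $|\nabla\sigma|\le L$, so the drift and diffusion of $|J_1|^{2p}$ are each bounded by $C|J_1|^{2p}$; Gr\"onwall then produces $\sup_{t\le s\le T}\E|J_1(s)|^{2p}\le C_p$, uniformly in $x$. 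For $J_2$, differentiating once more produces a linear SDE of the form
\[
dJ_2(s) = \nabla b\cdot J_2\,ds + \nabla^2 b : J_1\otimes J_1\,ds + (\text{analogous diffusion terms}),\qquad J_2(t)=0,
\]
where $\nabla^2 b$ has polynomial growth in $x$ by Assumption \ref{WEA}(a) and $\nabla^2\sigma$ is bounded by Assumption \ref{WEA}(c). Using the same one-sided Lipschitz dissipativity for the $\nabla b\cdot J_2$ term, together with H\"older's inequality to absorb the forcing $\nabla^2 b(s,X^{t,x}):J_1\otimes J_1$ via the moment bounds $\E|X^{t,x}(s)|^{q}\le C(1+|x|^q)$ and $\E|J_1(s)|^q\le C_q$, a further Gr\"onwall argument gives $\E|J_2(s)|^{2p}\le C_p(1+|x|^{q_2})$ for some $q_2$. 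The argument for $J_3$ is identical in structure, with forcing terms built from $\nabla^2 b, \nabla^3 b, \nabla^2\sigma, \nabla^3\sigma$ acting on $J_1,J_2$, producing $\E|J_3(s)|^{2p}\le C_p(1+|x|^{q_3})$.

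Finally, I would combine these bounds with Cauchy--Schwarz and the polynomial growth of $f$ and its derivatives. For instance,
\[
|\nabla_x v(t,x,T)|\le \left(\E|\nabla f(X^{t,x}(T))|^2\right)^{1/2}\left(\E|J_1(T)|^2\right)^{1/2}\le C(1+|x|^{\ell_v}),
\]
and similarly for $\nabla_x^2 v$ and $\nabla_x^3 v$ using the formulas that express them as expectations of polynomial combinations of $\nabla^j f(X^{t,x}(T))$ and $J_1,J_2,J_3$. Taking $\ell_v$ as the maximum of all exponents arising gives the claim.

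The main obstacle is controlling $J_2$ and $J_3$ in the presence of unbounded $\nabla^2 b,\nabla^3 b$: one must carefully exploit the dissipativity of the $\nabla b$ drift in the linearized equations to counter the polynomially growing forcing terms, so that Gr\"onwall closes with a polynomial-in-$|x|$ right-hand side rather than blowing up. All other steps are standard once the variation-process moment estimates are in hand.
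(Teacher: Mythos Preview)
Your proof plan is correct in substance but takes a genuinely different route from the paper. The paper works entirely on the PDE side: it writes the backward Kolmogorov equation for $u(t,x)=v(T-t,x)$ and runs a Bernstein-type maximum-principle argument, inductively constructing auxiliary quantities such as $w=\frac{|\nabla u|^2}{\sqrt{c-u}}$ (and then $w=\frac{|\nabla^2 u|^2}{\sqrt{c-|\nabla u|^2}}$, etc.), localising with a cutoff on $B_1(x_0)$, and reading off the polynomial bound from the parabolic maximum of $\psi w$. Your approach is probabilistic: differentiate the Feynman--Kac formula and control the variation processes $J_1,J_2,J_3$ via It\^o plus Gr\"onwall, using the one-sided Lipschitz bound $e^{\top}\nabla b\,e\le L|e|^2$ (obtained by differentiating Assumption~\ref{SEA}(b)) to tame the linearised drift in each $J_k$-equation. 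Both are standard and both work here; the Bernstein route avoids having to justify differentiability of the stochastic flow in the initial condition (which is not entirely trivial when $\nabla b$ is unbounded), while your route is more transparent about where each polynomial exponent comes from and extends more readily if one later wants integration-by-parts formulas of Bismut--Elworthy--Li type.

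Two small points to tighten. First, Assumption~\ref{WEA}(c) bounds derivatives of $\Lambda=\sigma\sigma^{\top}$, not of $\sigma$; you will need a short argument (using the ellipticity $\Lambda\succeq\lambda I$ and the Lipschitz bound on $\sigma$ from Assumption~\ref{SEA}(b)) to extract the bounds on $\nabla^2\sigma,\nabla^3\sigma$ that you invoke for the diffusion of $J_2,J_3$. Second, you should note explicitly that the existence and differentiability of the flow $x\mapsto X^{t,x}(s)$ up to third order is itself a consequence of the very moment estimates you derive for the $J_k$ (via a standard approximation by globally Lipschitz coefficients), so that the argument does not become circular.
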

        The result above under similar assumptions is relatively well-known (see for instance \cite[Lemma 3.4]{wang2024weak}). For completeness, we provide a detailed proof of Lemma \ref{PL} in Appendix \ref{sec:bernstein}, mainly using some PDE techniques including the Bernstein-type estimation \cite{bernstein1906generalisation, bernstein1910generalisation, du2024collision}.


Next, we will get some local moment estimates.  We set $X_{a, {x}}(t)$ to be the solution ${X}(t)$ of \eqref{SDE}  with initial position $x$ and initial time $a$ ($X(a)= {x}$) for $t\ge a$, and $\hat{X}_{a, {x}}(t)$ to be the solution $\hat{X}(t)$ of \eqref{MTE} with initial position $x$ and initial time $a$ ($ \hat{X}(a)= {x}$) for $t\ge a$. We also denote $T_i:=ih$, $T=:Nh$ and $X_i:=X(t_i)$, $\hat{X}_i:=\hat{X}(t_i)$. Consider $X(t)$, $\hat{X}(t)$ sharing the same initial state $\hat{X}_{T_k}$. Usually, one controls $\mathbb{E}|X(h) - \hat{X}(h)|^{2p}$ by $O(h^p)$ due to the presence of Brownian motions. However, making using of the smallness of the difference of $\sigma$, we can actually derive a better $O(h^{2p})$ upper bound.

\begin{lemma}\label{lmm:localmoments}
Suppose Assumptions \ref{SEA} and \ref{ass:initial} hold. Fix $k \in \mathbb{N}$ and denote $X(t)$ and $\hat{X}(t)$ to be $X_{T_k, \hat{X}(T_k)}(t)$ and $\hat{X}_{T_k, \hat{X}(T_k)}(t)$ respectively for $t\ge T_k$, where $\hat{X}_{T_k}$ is the numerical solution at $T_k$. Then, 
\begin{enumerate}[(a)]
\item For any $p \geq 1$,
\begin{equation}\label{eq:pthstability}
    \mathbb{E}|X(T_{k+1}) - \hat{X}(T_{k+1})|^{2p} \leq C h^{2p}.
\end{equation}
\item For any $\mathcal{F}_k$-measurable $\R^d$-valued random variable $Z$,
\begin{equation}\label{eq:nablau}
    \mathbb{E}\left[Z \cdot (X(T_{k+1}) - \hat{X}(T_{k+1})) \right]  \leq C \left( \mathbb{E}|Z|^2\right)^{\tfrac{1}{2}} h^{2},
\end{equation}
\item For any $\mathcal{F}_k$-measurable $\R^{d\times d}$-valued random variable $Z'$,
\begin{equation}\label{eq:nabla2u}
    \mathbb{E}\left[(\hat{X}(T_{k+1}) - X(T_{k})) \cdot Z' \cdot (X(T_{k+1}) - \hat{X}(T_{k+1}))  \right] \leq C\left( \mathbb{E}|Z|^4\right)^{\tfrac{1}{4}}h^2.
\end{equation}
\end{enumerate}
Above, the positive constant $C$ is independent of $h$, $\xi$, $k$.
\end{lemma}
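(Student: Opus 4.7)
The plan is to reduce all three bounds to estimates on the one-step difference $\chi := X(T_{k+1}) - \hat X(T_{k+1})$, which starts at zero at time $T_k$ because $X(T_k) = \hat X(T_k)$. Writing out the defining SDE and MTE scheme, $\chi = D^\chi + M^\chi$, where
\[
D^\chi := \int_{T_k}^{T_{k+1}}\!\!\bigl[b(t,X(t)) - b^{\xi,h}(T_k,\hat X(T_k))\bigr]dt, \qquad M^\chi := \int_{T_k}^{T_{k+1}}\!\!\bigl[\sigma(t,X(t)) - \sigma(T_k,\hat X(T_k))\bigr]dW_t,
\]
and similarly $\Delta\hat X := \hat X(T_{k+1}) - X(T_k) = h b^{\xi,h}(T_k,\hat X(T_k)) + \sigma(T_k,\hat X(T_k))\Delta W$.

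For part (a), I apply BDG to $M^\chi$ and Jensen to $D^\chi$. The integrand of $M^\chi$ is $O(h^{1/2})$ in $L^{2p}$ by the Lipschitz/time-Lipschitz conditions of Assumption \ref{SEA}, the moment bound of Lemma \ref{SE2}, and the single-step H\"older continuity of Lemmas \ref{SE3'} and \ref{SE3}; BDG then gives the stochastic integral an extra $h^{1/2}$, so $\|M^\chi\|_{L^{2p}} = O(h)$. The drift $D^\chi$ is $O(h)$ in $L^{2p}$ by the polynomial growth of $b$, the moment bound, and Lemma \ref{SE4} for the taming piece. Combining gives \eqref{eq:pthstability}.

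For parts (b) and (c), the strategy is to establish that, respectively, $\mathbb{E}[\chi\mid\mathcal{F}_{T_k}]$ and $\mathbb{E}[\chi\otimes\Delta\hat X\mid\mathcal{F}_{T_k}]$ are of order $h^2$ in $L^2$, after which the claims follow from the tower property and Cauchy--Schwarz (using $\|Z'\|_{L^2}\le\|Z'\|_{L^4}$ for (c)). For (b), $M^\chi$ has zero conditional expectation; within $D^\chi$ I decompose as in \eqref{eq:betahsplit} into an Ito-type increment $b(t,X(t)) - b(T_k,\hat X(T_k))$, the random-batch discrepancy $\varphi_\xi(T_k,\hat X(T_k))$, and the taming piece $b^\xi - b^{\xi,h}$. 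Dynkin's formula applied to $b$ (using Assumption \ref{WEA}(a) for the required polynomial bounds on $\nabla b$, $\nabla^2 b$) gives the first piece a conditional mean of order $t-T_k$, integrating to $O(h^2)$. The random-batch piece has mean zero once integrated over $\xi$, which is independent of $\mathcal{F}_{T_k}$. The taming piece is $O(h^q)$ in $L^p$ for any $q$ by Lemma \ref{SE4}.

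For (c), I handle the four cross-terms in $\mathbb{E}[\chi\otimes\Delta\hat X\mid\mathcal{F}_{T_k}]$ separately. The drift-drift product is $O(h^2)$ pointwise. The $M^\chi\otimes hb^{\xi,h}$ term vanishes in conditional expectation because $hb^{\xi,h}$ is $\mathcal{F}_{T_k}\vee\sigma(\xi)$-measurable and $M^\chi$ is a martingale. The $D^\chi\otimes\sigma\Delta W$ term is treated by applying Ito's formula to $b(s,X(s))$ to expose an adapted martingale piece whose cross-covariation with $\Delta W$ produces $O(h^2)$ via Ito isometry and Fubini. The most delicate term is $M^\chi\otimes\sigma\Delta W$: by the Ito isometry it equals
\[
\int_{T_k}^{T_{k+1}} \mathbb{E}\bigl[\sigma(t,X(t)) - \sigma(T_k,\hat X(T_k))\,\big|\,\mathcal{F}_{T_k}\bigr]\,\sigma(T_k,\hat X(T_k))^T\,dt,
\]
and to upgrade the naive $O(h^{3/2})$ bound to $O(h^2)$ one must show that the conditional expectation of $\sigma(t,X(t)) - \sigma(T_k,\hat X(T_k))$ is $O(t - T_k)$, not $O((t-T_k)^{1/2})$. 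This is the main obstacle, and it requires applying Dynkin's formula to $\sigma$ itself, which in turn needs sufficient smoothness of $\sigma$ (e.g.\ bounded $\nabla\sigma$, $\nabla^2\sigma$); this smoothness is implicit in the setting of the weak-error analysis through Assumption \ref{WEA}(b)-(c), which controls $\nabla^k\Lambda$ under uniform ellipticity. Once this last term is bounded by $O(h^2)$, the four estimates combine to give the required $L^2$ bound on $\mathbb{E}[\chi\otimes\Delta\hat X\mid\mathcal{F}_{T_k}]$, completing the proof.
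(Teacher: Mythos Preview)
Your argument for parts (a) and (b) is essentially identical to the paper's: same drift/martingale split, same use of BDG and the $L^{2p}$-smallness of the $\sigma$-increment for (a), and the same elimination of the stochastic integral followed by the decomposition into SDE-increment, random-batch, and taming pieces for (b). The paper also claims $\bigl(\mathbb{E}\bigl[\,|\mathbb{E}[b(s,X(s))-b(T_k,\hat X(T_k))\mid\mathcal{F}_{T_k}]|^2\bigr]\bigr)^{1/2}\le C(s-T_k)$ as ``straightforward'' without explicitly invoking Dynkin or the extra derivative bounds on $b$; you are right that this step tacitly uses regularity beyond Assumption~\ref{SEA}, and your reference to Assumption~\ref{WEA} is the honest way to justify it.

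For part (c) the two routes differ in packaging but coincide in substance. The paper applies It\^o's formula directly to the bilinear form $Y(t)=(\hat X(t)-X(T_k))\cdot Z'\cdot(X(t)-\hat X(t))$, which produces three integrals: two drift terms and the cross-variation term $\int_{T_k}^{T_{k+1}}\sigma(T_k,X(T_k))\cdot Z'\cdot\bigl(\sigma(s,X(s))-\sigma(T_k,X(T_k))\bigr)\,ds$. The paper then handles the first drift term by splitting $\hat X(s)-X(T_k)$ into its $\xi$-dependent drift piece (which carries an extra factor $s-T_k$) and its $\xi$-independent stochastic piece (where $\mathbb{E}_\xi b^\xi=b$ can be applied), and treats the other two terms ``as in (b)'' via conditional expectation. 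Your approach expands $\chi\otimes\Delta\hat X$ at terminal time into four cross-terms and reaches the same endpoint; in particular your $M^\chi\otimes\sigma\Delta W$ term, after the conditional It\^o isometry, is exactly the paper's cross-variation term, and both of you need Dynkin applied to $\sigma$ to turn the naive $O(h^{3/2})$ into $O(h^2)$. The product-It\^o route is marginally cleaner because the covariation term is generated automatically rather than isolated by hand, but there is no genuine difference in the analysis.
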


\begin{proof}
(a) By definition, since $X(T_k) = \hat{X}(T_k)$, one has
\begin{multline}\label{eq:hatXhXh}
    X(T_{k+1}) - \hat{X}(T_{k+1}) = \int_{T_k}^{T_{k+1}} \big(b(s,X(s)) - b^{\xi,h}(T_k,\hat{X}(T_k)) \big) ds\\
    + \int_{T_k}^{T_{k+1}} \big(\sigma(s,X(s)) - \sigma(T_k,\hat{X}(T_k)) \big) dW_s.
\end{multline}
For the drift term, note that under the current assumptions, since $\hat{X}(T_k)$ has finite moments, for $s \in [T_k,T_{k+1}]$, it is easy to see that $X(s)$ and $\hat{X}(s)$ also have uniformly bounded finite moments. Then since $b$ and $b^\xi$ have polynomial upper bounds by Assumption \ref{SEA} (also recall that $|b^{\xi,h}| \leq |b|$), it is straightforward to derive the $O(h^{2p})$ upper bound for its $L^{2p}$-norm.  For the diffusion term, BDG inequality implies
\begin{equation*}
    \mathbb{E}\big|\int_{T_k}^{T_{k+1}} \big(\sigma(s,X(s)) - \sigma(T_k,\hat{X}(T_k)) \big) dW_s \big|^{2p} \leq C\mathbb{E}\left(\int_{T_k}^{T_{k+1}}  |\sigma(s,X(s)) - \sigma(T_k,\hat{X}(T_k))|^2 ds \right)^p.
\end{equation*}
Using the conditions for $\sigma$ in Assumptions \ref{SEA} , since $\hat{X}(T_k) = X(T_k)$, for $s \in [0,h]$, it holds
\begin{gather*}
     \mathbb{E}|\sigma(s,X(s)) - \sigma(T_k,\hat{X}(T_k))|^{2p} \leq  Ch^p.
\end{gather*}
Hence, by H\"older's inequality, \eqref{eq:pthstability} holds.

(b) Recall \eqref{eq:hatXhXh} by definition. Clearly, since $Z$ is $\mathcal{F}_k$-measurable, 
\[
\mathbb{E}\left[Z \cdot (X(T_{k+1}) - \hat{X}(T_{k+1})) \right]
=\E\left[Z\cdot\int_{T_k}^{T_{k+1}} \big(b(s,X(s)) - \E_{\xi} b^{\xi,h}(T_k,\hat{X}(T_k)) \big) ds\right].
\]
Making use of the consistency property of the random batch $\xi$ (i.e. $\mathbb{E}b^\xi(x) = b(x)$ for any deterministic $x$ in $\mathbb{R}^d$), we split the above into
\[
\int_{T_k}^{T_{k+1}}\E\left[Z\cdot\big(b(s,X(s)) - b(T_k,\hat{X}(T_k)) \big)
+Z\cdot\E_{\xi}\big( b^{\xi}(T_k,\hat{X}(T_k))- b^{\xi,h}(T_k,\hat{X}(T_k)) \big) \right] ds.
\]
The second term above is the taming error, using Lemma \ref{SE4} again, for any $p,p'>1$, its $p$-th moment has $O(h^{p'})$ upper bound. This is the place where the taming technique plays the key role for weak accuracy improvement.

The first term is the error for standard SDE evolution. It is straightforward to see that
\begin{multline*}
\E[Z\cdot\big(b(s,X(s)) - b(T_k,\hat{X}(T_k)) \big)]\\
\le (\E|Z^2|)^{1/2}\left(\E\left(\E\left[b(s,X(s)) - b(T_k,\hat{X}(T_k))|\mathcal{F}_k\right]\right)^2\right)^{1/2}\le C(\E|Z^2|)^{1/2}(s-T_k).
\end{multline*}
The conclusion then follows.

(c) By It\^o's formula for $Y(t)=(\hat{X}(t) - X(T_k))\cdot Z' \cdot  (X(t) - \hat{X}(T_{k+1})$,
\begin{equation}\label{eq:lmmito}
\begin{aligned}
    &\quad\mathbb{E}\left[(\hat{X}(T_{k+1}) - X(T_k))\cdot Z' \cdot  (X(T_{k+1}) - \hat{X}(T_{k+1}))  \right]\\
    &=\mathbb{E}\left[\int_{T_k}^{T_{k+1}} \big(\hat{X}(s) - X(T_k) \big) \cdot Z' \cdot  (b(s,X(s)) - b^{\xi,h}(T_k,\hat{X}(T_k))) ds
    \right]\\
    &\quad + \mathbb{E}\left[\int_{T_k}^{T_{k+1}}b^{\xi,h}(T_k,\hat{X}(T_k)) \cdot Z' \cdot (X(s) - \hat{X}(s))ds\right] \\
    &\quad + \mathbb{E}\left[\int_{T_k}^{T_{k+1}}\sigma(T_k,X(T_k)) \cdot Z' \cdot \big(\sigma(s,X(s)) - \sigma(T_k,X(T_k)) \big)  ds\right]
\end{aligned}
\end{equation}
For the first term above, the taming error is again very small due to our modified tamed scheme. The problem is that $\E_{\xi}$ cannot be taken directly to $b^{\xi}$ as $\hat{X}$ depends on $\xi$. However, the dependence of $\xi$ only appears in the drift which is not the main term. In particular,
\begin{multline*}
\mathbb{E}\left[\int_{T_k}^{T_{k+1}} \big(\hat{X}(s) - X(T_k) \big) \cdot Z' \cdot  (b(s,X(s)) - b^{\xi}(T_k,\hat{X}(T_k))) ds
    \right]\\
    =\mathbb{E}\left[\int_{T_k}^{T_{k+1}} \E_{\xi}\left( b^{\xi,h}(T_k,\hat{X}(T_k))(s-T_k) \cdot Z' \cdot  (b(s,X(s)) - b^{\xi}(T_k,\hat{X}(T_k))) ds\right)\right]\\
    +\mathbb{E}\left[\int_{T_k}^{T_{k+1}} (\int_{T_k}^s \sigma(T_k,X(T_k)) \cdot dW_{s'})\cdot Z' \cdot  (b(s,X(s)) - \E_{\xi}b^{\xi}(T_k,\hat{X}(T_k))) ds\right].
\end{multline*}
After applying $\E_{\xi}b^{\xi}=b$, we can bound all these as desired. The other two terms in \eqref{eq:lmmito} are similar as in (b), where we need the conditional expectation $\E(\cdot | \mathcal{F}_k)$ to average out the roughness of Brownian motions and the desired bound is easy to obtain. 
\end{proof}

With the lemmas above, we now give the proof of the first-order weak convergence rate. 	
\begin{proof}[Proof of Theorem \ref{thm:MTEweakconv}]
Recall the definitions of $T_k$, $X_{T_k, \hat{X}(T_k)}(T_k+h)$, $\hat{X}_{T_k, \hat{X}(T_k)}(T_k+h)$ above after the statement of Theorem \ref{thm:MTEweakconv}.
Note that $X_0=\bar{X}_0$. Clearly,
\begin{equation}\label{eq:weakproofaux1}
 \mathbb{E}[f( {X}(t_N))]-\mathbb{E}[f( {\hat{X}}_N)]
	= \sum_{i=0}^{N-1}\left(\mathbb{E}[f\left(X_{t_i,\hat{X}_i}(t_N)\right) ]-\mathbb{E}[f\left(X_{t_{i+1},\hat{X}_{t_{i+1}}}(t_N) \right)]\right).
\end{equation}
Consider the function
		$v(s, {x}):=\mathbb{E}[f( {X}_{s, {x}}(t_N))]$, then
\begin{equation*}
    \mathbb{E}[f(X(t_N))]-\mathbb{E}[f(\hat{X}_N)]= \sum_{i=0}^{N-1}(\mathbb{E}[v(t_{i+1},X_{t_i,\hat{X}_i}(t_{i+1})) ] - \mathbb{E}[v(t_{i+1},\hat{X}_{t_i,\hat{X}_i}(t_{i+1})) ]  ).
\end{equation*}
We need to estimate $\mathbb{E}[v(t_{i+1},X_{t_i,\hat{X}_i}(t_{i+1})) ] - \mathbb{E}[v(t_{i+1},\hat{X}_{t_i,\hat{X}_i}(t_{i+1})) ]$ for each $i$.

Fix $i$. For convenience, we use $X(T_i+s)$ and $\hat{X}(T_i+s)$ to replace $X_{T_i, \hat{X}_i}(T_i+s)$ and $\hat{X}_{T_i, \hat{X}_i}(T_i+s)$ for $s \in [0,h)$, but we shall keep in mind they have the same initial data $X(T_i) = \bar{X}(T_i)=\hat{X}_i$ at time $T_i$.  By Taylor's expansion,
\begin{equation}\label{eq:weaktaylor}
    \begin{aligned}
			&\mathbb{E}[v(T_{i+1}, {X}(T_{i+1}))]-\mathbb{E}[v(T_{i+1}, {\hat{X}}(T_{i+1}))]\\
			= & \mathbb{E}[\nabla v(T_{i+1}, {\hat{X}}(T_{i+1}))\cdot ( {X}(T_{i+1})- {\hat{X}}(T_{i+1}))\\
			&+\left( {X}(T_{i+1})- {\hat{X}}(T_{i+1})\right)^{\otimes 2}:\int_0^1  \lambda \nabla^2 v\left(T_{i+1},\lambda X(T_{i+1})+(1-\lambda)\hat{X}(T_{i+1})\right)d\lambda].
		\end{aligned}
\end{equation}

By Lemma \ref{PL}, $\nabla^2 v$ has a polynomial upper bound. Moreover, similarly as in the discussion after \eqref{eq:hatXhXh}, since $\hat{X}(T_i)$ has finite moments by Lemma \ref{SE2}, both $X(T_{i+1})$ and $\hat{X}(T_{i+1})$ have uniformly (in $i$ and $h$) bounded $q$-moments. Then, for the second-order term in \eqref{eq:weaktaylor}, 
one has by H\"older's inequality and Lemma \ref{lmm:localmoments} directly that
\begin{multline*}
\E\left[\left( {X}(T_{i+1})- {\hat{X}}(T_{i+1})\right)^{\otimes 2}:\int_0^1  \lambda \nabla^2 v\left(T_{i+1},\lambda X(T_{i+1})+(1-\lambda)\hat{X}(T_{i+1})\right)d\lambda\right] \\
\le C(\E|{X}(T_{i+1})- {\hat{X}}(T_{i+1})|^4)^{1/2}\le Ch^2.
\end{multline*}

Next, we estimate the first-order term in \eqref{eq:weaktaylor}. By Taylor's expansion for $\nabla v$, 
\begin{equation*}
    \begin{aligned}
			&|\mathbb{E}[\nabla v(T_{i+1}, {\hat{X}}(T_{i+1})) \cdot ( {X}(T_{i+1})- {\hat{X}}(T_{i+1}))]|\\
			\leq & \Big|\mathbb{E}\Big[\nabla v(T_{i+1}, {X}(T_i)) \cdot \Big( {X}(T_{i+1})- {\hat{X}}(T_{i+1})\Big) \Big]\Big|\\
			&+ \Big|\mathbb{E}\Big[\big({\hat{X}}(T_{i+1})- {X}(T_i)\big) \cdot \nabla^2 v(T_{i+1}, {X}(T_i)) \cdot \big( {X}(T_{i+1})- {\hat{X}}(T_{i+1})\big)\Big]\Big|\\
			&+\mathbb{E}\Big[\frac{1}{2}\sup_{\lambda \in [0,1]}| \nabla^3 v(T_{i+1}, Z(\lambda) ))| | { {\hat{X}}}(T_{i+1})- {X}(T_i)|^2| {X}(T_{i+1})- {\hat{X}}(T_{i+1})|\Big],
		\end{aligned}
\end{equation*}
where $Z(\lambda):=\lambda {X}(T_{i+1}) + (1-\lambda){\hat{X}}(T_{i+1})$. Both $\nabla v(T_{i+1}, {X}(T_i))$ and $\nabla^2 v(T_{i+1}, {X}(T_i))$ measurable with respect to $\mathcal{F}_i$.  Moreover, they have finite moments by Lemma \ref{PL} and  the moment bound in Lemma \ref{SE2}. Therefore, by \eqref{eq:nablau} and \eqref{eq:nabla2u} in Lemma \ref{lmm:localmoments}, one has that the first two terms on the right hand side have $O(h^2)$ upper bounds. For the third term, note that $\mathbb{E}|{ {\hat{X}}}(T_{i+1})- {X}(T_i)|^2$ is of $O(h)$. Then one can similarly use the polynomial bound of $\nabla^3 v$ in Lemma \ref{PL}, the moment bound in Lemma \ref{SE2}, \eqref{eq:pthstability} in Lemma \ref{lmm:localmoments}, and H\"older's inequality and derive an $O(h^2)$ upper bound.

Combining the above, we have
\begin{equation*}
    |\mathbb{E}[v(T_{i+1}, {X}_{t, {\bar{X}}(t)}(T_{i+1}))]-\mathbb{E}[v(T_{i+1}, {\bar{X}}_{t, {\bar{X}}(t)}(T_{i+1}))]|\leq Ch^2.
\end{equation*}
Therefore, together with \eqref{eq:weakproofaux1}, the weak error bound follows.
\end{proof}

\begin{remark}
Usually, to prove the weak error of order $p$, one needs the derivatives of the test function to $2p$ for moment matching. Here, we only need the derivatives of $v$ up to third order because we computed the the moments of $X(T_i+h)-\hat{X}(T_i+h)$ instead of the moments of $X(T_i+h)-X(T_i)$ and $\hat{X}(T_i+h)-\hat{X}(T_i)$.
\end{remark}

As we mentioned in the proof of Lemma \ref{lmm:localmoments}, the order of taming has been improved. As a consequence, our MTE is of first order weak accuracy, preserving the original order of accuracy for Euler scheme, even if we choose $\alpha\in (0, 1/2]$.

\section{A near-optimal relative entropy bound for the modified tamed SGLD}\label{sec:SGLD}

As mentioned in Section \ref{sec:intro}, an important application of SDEs is the sampling problem. In this section, we apply the MTE scheme to the Langevin sampling task (in particular, the stochastic gradient Langevin dynamics (SGLD) \cite{welling2011bayesian}), and propose the tamed stochastic gradient Langevin dynamics (T-SGLD). We further rigorously prove that the proposed T-SGLD has a near-second-order accuracy in terms of relative entropy.

Suppose we aim to generate samples from a target distribution $\pi$, which is of the Gibbs form $\pi(x) \propto e^{-\beta U(x)}$, where $\beta > 0$ is the inverse temperature. One classical and effective way to sample from $\pi$ of this kind is using the following overdamped Langevin diffusion, whose invariant measure is exactly $\pi$:
\begin{equation}\label{eq:overdampedlangevin}
    dX(t) = -\nabla U(X(t))dt + \sqrt{2\beta^{-1}}\, dW_t,
\end{equation}
where $W(t)$ is the Brownian motion in $\mathbb{R}^d$. Suppose one can find some suitable numerical scheme to simulate the SDE \eqref{eq:overdampedlangevin}, then after running the scheme in a relatively long time interval, one can then treat the numerical solution at (or after) the terminal time as a sample generated from $\pi$. When the drift term $-\nabla U(\cdot)$ is global Lipschitz (i.e. the Hessian of $\log \pi$ has a global upper bound), the SGLD algorithm proposed in \cite{welling2011bayesian} is an effective approach for such sampling problem. The SGLD algorithm uses a Euler discretization of \eqref{eq:overdampedlangevin}, and uses a random-batch approximation for the drift to reduce the computational cost. It is particularly suited to solve high-dimensional sampling problems such as Bayesian inference, and in the past decades, there exist fruitful results proving the effectiveness of the SGLD algorithm \cite{teh2016consistency, mou2018generalization, li2022sharp}.

Despite the popularity and efficiencyof SGLD, it would still blow up when the Hessian of $\log \pi$ is not bounded. One typical example is $\pi(x) \propto e^{-|x|^p}$ for $p > 2$. This then motivates us to apply our modified tamed scheme to make SGLD effective even in this super-linear growth condition. We name the proposed algorithm the tamed stochastic gradient Langevin dynamics (T-SGLD). In detail, for a constant  time step $h$, the T-SGLD iteration is given by
\begin{equation}\label{eq:tsglddiscrete}
            \bar{X}_{i+1}=\bar{X}_i+hb^{\xi_i,h}(\bar{X}_i)+\sqrt{2\beta^{-1}h} \zeta_i, \quad \zeta_i\sim N(0,I_d), \text{i.i.d}.
\end{equation}
Above, $\xi_0, \dots, \xi_k,\dots$ are random batches i.i.d. picked at each time grid, $b^{\xi_k}(\cdot):= \nabla U^{\xi_k}(\cdot)$, and $U^{\xi_k}(\cdot)$ is an unbised estimate of $U(\cdot)$ satisfying $\mathbb{E}_{\xi_k}U^{\xi_k}(\cdot) = U(\cdot)$. The modified tamed drift $b^{\xi_k,h}$ is then defined in \eqref{eq:modifiedtameddrift} for the given $b^{\xi_k}$. Moreover, in our analysis, we consider a time interpolation of the discrete iteration \eqref{eq:tsglddiscrete}, which is similar to \eqref{MTE}:
\begin{equation}\label{eq:tsgldcontinuous}
            d\bar{X}(t) = b^{\xi_k,h}(\bar{X}(T_k)) dt + \sqrt{2\beta^{-1}} dW,\quad t \in [T_k, T_{k+1}),
\end{equation}
where $T_k := hk$ for $k=0,1,2,\dots$. Note that in order to emphasize that SGLD is a special case of the modified tamed scheme proposed in Section \ref{sec:scheme}, here we use a different notation $\bar{X}$ instead of $\hat{X}_h$.

        We denote $\rho_t$ the time marginal distribution of the Langevin diffusion \eqref{eq:overdampedlangevin}, and $\bar{\rho}_t^{(h)}$ the time marginal distribution of T-SGLD \eqref{eq:tsgldcontinuous}. In what follows, we will give a near-sharp uniform-in-time estimate for the relative entropy $\mathcal{H}(\bar{\rho}_t^{(h)} | \rho_t)$. Note that for two probability measures $\mu$ and $\nu$ on some Polish space $E$, $\mathcal{H}(\mu| \nu)$ is defined by
\begin{gather*}
\mathcal{H}(\mu\| \nu) := \left\{
\begin{aligned}
  &\int_E \log \frac{\mathrm{d} \mu}{\mathrm{d}\nu} \mathrm{d}\mu, & \text{if} ~ \mu \ll \nu,\\
 & \infty, & \text{else},
\end{aligned}\right.
\end{gather*}
where $\frac{\mathrm{d} \mu}{\mathrm{d}\nu}$ is the Radon-Nikodym derivative of $\mu$ with respect to $\nu$.

As a Langevin sampling algorithm, we expect the numerical scheme to have long-time convergence. Therefore, we require the following additional mild conditions (recall that $b = -\nabla U$):
        \begin{assumption} \label{SGLDA}
        \begin{enumerate}[label=(\alph*)]
            \item  There exist constants $C>0$ and $\ell \geq 1$ independent of $\xi$ such that for any $\xi$ and $x$,
        \begin{equation*}
            |b^\xi(x)|\leq C(1+|x|^\ell), \quad |
		\nabla b^\xi(x)| \leq C(|b^\xi(x)|+1), \quad |\nabla^2b^\xi(x)|\leq C(1 + |x|^\ell).
\end{equation*} 
\item The distance between $b^\xi$ and $b$ is uniformly bounded.
\[
\sup_\xi\sup_x |b^\xi(x)-b(x)|<\infty.
  \]
\item There exist constants $R>0, \mu>0, b_0>0$, such that
\begin{equation*}
     \frac{x\cdot b^\xi (x)}{|x||b^\xi (x)|}\leq -\mu, |b^\xi(x)|> b_0, \forall |x|>R,\quad \forall \xi.
\end{equation*}
     
\end{enumerate}
 \end{assumption}
Note that the first condition implies that
$\sup_{\xi} |b^\xi(x)|$ is uniformly bounded in a given compact set.
 Moreover, we need the following assumptions:
\begin{assumption}\label{PGE}
 About the initial data, we assume the following
\begin{enumerate} [label=(\alph*)]
\item The invariant distribution $\pi \propto e^{-\beta U}$ satisfies a Log-Sobolev inequality with a constant $C^{LS}_\pi$, i.e., for all $f$ nonnegative and smooth, the following holds:
\begin{gather*}
\text{Ent}_{\pi}(f):=\int f\log f d\pi-\int f\log\left(\int fd\pi\right) d\pi \leq C^{LS}_{\pi}\int \frac{|\nabla f|^2}{f}d\pi.
\end{gather*}
\item  There exists $\lambda>1$ s.t. the initial distribution $\rho_0$ satisfies:
            $$\frac{1}{\lambda}\leq \frac{\rho_0}{\pi}\leq \lambda$$
and there exists $\delta>0$ such that $\int e^{\delta\sqrt{1+|x|^2}}\rho_0(x)dx<\infty$.
 \end{enumerate}
\end{assumption}

Then, with the above assumptions (note that we still do not have any Lipschitz assumptions for the drift term), we are able to obtain the following near-sharp uniform-in-time relative entropy error bound for the T-SGLD algorithm.

\begin{theorem}\label{thm:tsgld}
        Suppose Assumptions \ref{SGLDA} and \ref{PGE} hold.
		For any $\epsilon \in (0,1)$ , there exists a T-SGLD algorithm with taming parameter $\alpha = \frac{\epsilon}{2}$ such that for small $h$,
        \begin{equation}
            \sup_{t\geq 0}\mathcal{H}(\bar{\rho}_t^{(h)}| \rho_t)\leq Ch^{2-\epsilon},
        \end{equation}
        where $C$ is a positive constant independent of $h$.
\end{theorem}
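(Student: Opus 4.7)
The strategy is the entropy method for SGLD-type samplers (as in \cite{li2022sharp}), adapted to the modified taming. Let $T_k$ be the last grid point before $t$, and introduce the effective conditional drift
$$\tilde b_t(x):=\mathbb{E}\bigl[b^{\xi_k,h}(\bar X(T_k))\,\big|\,\bar X(t)=x\bigr].$$
Then $\bar\rho_t^{(h)}$ solves $\partial_t\bar\rho_t^{(h)}=-\nabla\cdot(\tilde b_t\,\bar\rho_t^{(h)})+\beta^{-1}\Delta\bar\rho_t^{(h)}$, while $\rho_t$ solves the same equation with $b=-\nabla U$. A standard integration by parts yields the dissipation identity
$$\frac{d}{dt}\mathcal{H}(\bar\rho_t^{(h)}\|\rho_t)=-\beta^{-1}I(\bar\rho_t^{(h)}\|\rho_t)+\int(\tilde b_t-b)\cdot\nabla\log\frac{\bar\rho_t^{(h)}}{\rho_t}\,\bar\rho_t^{(h)}\,dx,$$
where $I$ denotes the relative Fisher information. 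Since Assumption \ref{PGE}(b) provides a uniform two-sided bound on $\rho_0/\pi$, a Holley--Stroock argument combined with the standard maximum-principle bound on $\rho_t/\pi$ along the Langevin flow shows that $\rho_t$ inherits a log-Sobolev inequality (LSI) with constant uniform in $t$. Combining this LSI with a Young splitting of the cross term then produces a Grönwall-type inequality
$$\frac{d}{dt}\mathcal{H}(\bar\rho_t^{(h)}\|\rho_t)\le -c\,\mathcal{H}(\bar\rho_t^{(h)}\|\rho_t)+C\,\mathbb{E}\bigl|\tilde b_t(\bar X(t))-b(\bar X(t))\bigr|^2,$$
which reduces the theorem to a uniform-in-time $O(h^{2-\epsilon})$ bound on the drift mismatch $\mathbb{E}|\tilde b_t-b|^2$.

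To bound the drift mismatch I would split it into three pieces: the taming error $b^{\xi_k,h}(\bar X(T_k))-b^{\xi_k}(\bar X(T_k))$, the random-batch error $b^{\xi_k}(\bar X(T_k))-b(\bar X(T_k))$, and the discretization error $b(\bar X(T_k))-b(\bar X(t))$. Lemma \ref{SE4}, the payoff of the modified taming, makes the first piece $O(h^q)$ of arbitrary order after setting $\alpha=\epsilon/2$, so it is subdominant. For the random-batch piece, I use $\mathbb{E}_{\xi_k}b^{\xi_k}=b$ to kill the leading $O(1)$ contribution, then Taylor expand in $\bar X(t)-\bar X(T_k)$ so that cross terms against the Brownian increment vanish after conditioning on $\mathcal{F}_{T_k}$, giving an $O(h^2)$ remainder. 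The discretization error is controlled by the same kind of stochastic Taylor expansion: the leading $O(\sqrt h)$ Brownian contribution is mean-zero against any $\mathcal{F}_{T_k}$-measurable weight, and the deterministic drift contribution is pointwise $O(h)$, so squaring yields $O(h^2)$. The polynomial growth of $b$, $\nabla b$, $\nabla^2 b$ from Assumption \ref{SGLDA}(a) is absorbed by uniform-in-time polynomial and exponential moment bounds on $\bar X(t)$, derived via a Lyapunov argument that uses Assumption \ref{SGLDA}(c) together with the taming bound $|b^{\xi,h}|\le 2h^{-\alpha}$ and the exponential-moment control on the initial data in Assumption \ref{PGE}(b). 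A Grönwall argument on the entropy dissipation then delivers $\sup_{t\ge 0}\mathcal{H}(\bar\rho_t^{(h)}\|\rho_t)\le Ch^{2-\epsilon}$.

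The principal difficulty is extracting the near-second-order rate from a drift mismatch whose naive $L^2$ size is only $O(\sqrt h)$: the extra factor must be recovered by simultaneously exploiting two cancellations, $\mathbb{E}_{\xi_k}b^{\xi_k}=b$ for the random-batch contribution and the martingale property of the Brownian increments for the discretization contribution, while keeping the taming remainder uniformly small in $t$. The $h^\epsilon$ loss reflects the trade-off between the taming threshold $h^{-\alpha}$ (needed so that $|b^{\xi,h}|$ remains bounded at each step) and the Markov tail estimate $\mathbb{P}(|b^{\xi_k}(\bar X(T_k))|>h^{-\alpha})\lesssim h^q$ from the moment bound, which is the engine behind Lemma \ref{SE4}. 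Ensuring that these delicate moment-matching cancellations remain intact after the taming cutoff, uniformly for $t\ge 0$, is where the bulk of the technical work lies.
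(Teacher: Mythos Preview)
Your overall framework matches the paper: compute $\frac{d}{dt}\mathcal{H}(\bar\rho_t^{(h)}\|\rho_t)$ via the Fokker--Planck equation, invoke a uniform LSI for $\rho_t$ from Holley--Stroock (Proposition~\ref{prop:lsi}), split the cross term by Young, and close by Gr\"onwall. The taming error is indeed negligible by Lemma~\ref{SE4}, and the uniform moments come from a Lyapunov argument as in Lemma~\ref{UITMB}. So far so good.

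The genuine gap is in your treatment of the discretization and random-batch pieces. You write that ``the leading $O(\sqrt h)$ Brownian contribution is mean-zero against any $\mathcal{F}_{T_k}$-measurable weight.'' But the quantity you must control is $\mathbb{E}\bigl|\mathbb{E}[\,\cdot\,|\bar X(t)]\bigr|^2$, i.e.\ a \emph{backward} conditional expectation given the later time $\bar X(t)$, not a forward conditioning on $\mathcal{F}_{T_k}$. The martingale property $\mathbb{E}[W(t)-W(T_k)\mid\mathcal{F}_{T_k}]=0$ is irrelevant here: $\mathbb{E}[W(t)-W(T_k)\mid \bar X(t)]$ is \emph{not} zero, and $\nabla b(\bar X(t))$ is not $\mathcal{F}_{T_k}$-measurable. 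Similarly, for the random-batch piece, $\mathbb{E}_{\xi_k}b^{\xi_k}=b$ gives no direct cancellation after conditioning on $\bar X(t)$, because $\bar X(t)$ itself depends on $\xi_k$ through the drift on $[T_k,t)$.

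The paper resolves this backward-conditioning obstacle by a Bayes--formula / Gaussian integration-by-parts trick (Lemma~\ref{integrationbyparts}): writing $\mathbb{E}[\bar X(T_k)-\bar X(t)\mid\bar X(t)=x]$ via the explicit Gaussian transition kernel and integrating by parts produces $\nabla\log\bar\rho_{T_k}^{(h)}$, so the discretization term $J_1$ is bounded by $Ch^{2(1-\alpha)}\bigl(1+\mathcal{I}(\bar\rho_{T_k}^{(h)})\bigr)$, \emph{not} by $Ch^2$. The random-batch term $J_3$ is handled by an independent-copy / Girsanov comparison of $\bar\rho_t^{(\xi_k,h)}$ and $\bar\rho_t^{(\tilde\xi_k,h)}$, again yielding a bound with a Fisher-information factor. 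A separate lemma (Lemma~\ref{lmm:fisher1}) then controls $\int_0^t e^{-A_0(t-s)}\mathcal{I}(\bar\rho_s^{(\b{\xi},h)})\,ds$ uniformly. None of this machinery appears in your sketch, and without it the argument does not go through. Finally, the $h^\epsilon$ loss does not come from the taming tail as you suggest; it comes from the factor $|\nabla b^{\xi,h}|\le Ch^{-\alpha}$ that appears when Taylor-expanding in the backward-conditioning step (see \eqref{eq:keyimprove}).
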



\paragraph{Overview of the proof} The proof of Theorem \ref{thm:tsgld} seeks some help from \cite{li2022sharp}. The derivation in this section also relies on the Fokker-Planck-type equation associated with the numerical scheme \eqref{eq:tsgldcontinuous}. Note that the tamed-SGLD at discrete time points is a time-homogeneous Markov chain and $\bar{\rho}_{T_k}^{(h)}$  is the law at $T_k$ (recall that $T_k := kh$, $k=0,1,2,\dots$). Let us denote $\rho_{t}^{(\b{\xi},h)}$ the probability density of the fixed-batch version of SGLD for a given sequence of batches $\b{\xi}:=(\xi_0, \xi_1, \dots, \xi_k, \dots)$ so that 
\begin{equation*}
    \bar{\rho}^{(h)}_{t} = \mathbb{E}_{\xi}\left[\bar{\rho}_{t}^{(\b{\xi},h)}\right].
\end{equation*}
Moreover, by the Markov property, we are able to define
\begin{equation}\label{eq:rhoxikh_def}
    \bar{\rho}_t^{(\xi_k,h)} := \mathbb{E}\left[\bar{\rho}_t^{(\b{\xi},h)} \Big| \xi_i,\,i\geq k\right] = \cS_{T_k,t}^{(\xi_k,h)} \bar{\rho}_{T_k}^{(h)}, \quad t \in [T_k,T_{k+1}).
\end{equation}
Above, the operator $\cS_{T_k,t}^{(\xi_k,h)}$ is the evolution operator from $T_k$ to $t$ for the Fokker-Planck equation of the T-SGLD derived in Lemma \ref{FP} below. Namely, for some given $\xi_k$:  
\begin{equation*}
    \partial_t \bar{\rho}_t^{(\xi_k,h)} = -\nabla \cdot (\bar{\rho}_t^{(\xi_k,h)} \hat{b}_t^{\xi_k,h}) + \beta^{-1} \Delta \bar{\rho}_t^{(\xi_k,h)},\quad \bar{\rho}_{T_k}^{(\xi_k,h)}
    =\bar{\rho}_{T_k}^{(h)},
\end{equation*}
where $t \in [T_k,T_{k+1})$ and
\begin{equation*}
    \hat{b}^{\xi_k,h}_t(x) := \mathbb{E}\left[b^{\xi_k,h}\left( \bar{X}_{T_k}\right) |\bar{X}_t = x, \xi_k\right], \quad t \in \left[T_k,T_{k+1}\right).
\end{equation*}
The above Fokker-Planck-type equation enables one to calculate $\frac{d}{dt}\mathcal{H}(\bar{\rho}_t^{(h)} | \rho_t)$. As we will see in \eqref{eq:j1j2j3j4} below, $\frac{d}{dt}\mathcal{H}(\bar{\rho}_t^{(h)} | \rho_t)$ contains four main error terms: (1) the exponential decay term; (2) the time discretization error term; (3) the random batch error term; (4) the taming error term. For (1) - (3), we conduct similar estimations as in \cite{li2022sharp}. The detailed derivations are in fact highly nontrivial and require more careful analysis, since in this paper the drift term may grow super-linearly, which is different from the Lipschitz case considered in \cite{li2022sharp}. For (4), the taming error is handled by Lemma \ref{SE4} above. Recall that due to our modified structure of the tamed drift, the accuracy of the taming error can be arbitrarily high.

In what follows, we will first establish several useful auxiliary lemmas in Section \ref{sec:aux} and postpone the detailed proof of Theorem \ref{thm:tsgld} to Section \ref{eq:mainproofsgld}.
	
\subsection{Auxiliary results}\label{sec:aux}

In order to make our estimate uniform-in-time, we need  a uniform log-Sobolev inequality (LSI) for $\rho_t$ (recall that it is the density of $X_t$ in the Langevin diffusion defined in \eqref{eq:overdampedlangevin}) as follows. 
\begin{proposition}\label{prop:lsi}
Recall the overdamped Langevin equation \eqref{eq:overdampedlangevin} and its time marginal distribution $\rho_t$.
Suppose condition (c) in Assumption \ref{SGLDA} holds. Then, for ant $t \geq 0$, $\rho_t$ satisfies an LSI with a uniform LSI constant $\lambda^2C_{\pi}^{LS}$. 
\end{proposition}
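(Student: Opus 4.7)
The proof plan is to establish the claim in two steps: first propagate the two-sided bound $\rho_0/\pi \in [1/\lambda,\lambda]$ from Assumption \ref{PGE}(b) to $\rho_t/\pi$ for all $t \geq 0$, and then invoke the Holley--Stroock perturbation lemma to transfer the LSI from $\pi$ to $\rho_t$ with an explicit constant.

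For the first step, I would exploit the reversibility of the overdamped Langevin dynamics \eqref{eq:overdampedlangevin} with respect to $\pi \propto e^{-\beta U}$. Writing $f_t := \rho_t/\pi$, a direct computation using $\nabla \pi = -\beta \pi \nabla U$ reduces the forward Fokker--Planck equation $\partial_t \rho_t = \nabla\cdot(\nabla U\,\rho_t) + \beta^{-1}\Delta \rho_t$ to the backward Kolmogorov equation $\partial_t f_t = -\nabla U\cdot\nabla f_t + \beta^{-1}\Delta f_t = L f_t$, so that $f_t = P_t f_0$ where $P_t$ is the Markov semigroup associated with the diffusion. Because $P_t$ is a Markov semigroup, it is an $L^{\infty}$-contraction in the sense that $\inf f_0 \leq P_t f_0 \leq \sup f_0$, so $f_t \in [1/\lambda,\lambda]$ for all $t \geq 0$. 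Under the confinement Assumption \ref{SGLDA}(c) and the polynomial growth in \ref{SGLDA}(a), the Langevin SDE is well-posed with nonexplosive solutions, and standard parabolic arguments justify this representation rigorously; the integrability assumption on $\rho_0$ in \ref{PGE}(b) plus the LSI for $\pi$ give enough regularity of $f_0$ to make the semigroup calculation meaningful.

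For the second step, since $f_t = \rho_t/\pi \in [1/\lambda,\lambda]$ and is automatically normalized (both $\rho_t$ and $\pi$ are probability densities), I can write $d\rho_t = e^{-V_t}\,d\pi$ with $V_t := -\log f_t$, and $V_t$ has oscillation bounded by $\mathrm{osc}(V_t) \leq \log\lambda - \log(1/\lambda) = 2\log\lambda$. The classical Holley--Stroock perturbation lemma then states that $\rho_t$ inherits an LSI from $\pi$ with constant
\begin{equation*}
C_{\rho_t}^{LS} \leq C_\pi^{LS}\cdot \exp\bigl(\mathrm{osc}(V_t)\bigr) \leq C_\pi^{LS}\cdot \lambda^2,
\end{equation*}
which is exactly the claimed bound, independent of $t$.

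The main obstacle I anticipate is the rigorous justification of the semigroup identity $\rho_t/\pi = P_t(\rho_0/\pi)$ in this setting: the drift $-\nabla U$ is only one-sided Lipschitz and grows super-linearly, so the semigroup's smoothing and the maximum principle for $L$ are not completely off-the-shelf. However, condition \ref{SGLDA}(c) gives a dissipative far-field, which together with \ref{SGLDA}(a) ensures nonexplosion and a well-defined Markov semigroup on bounded measurable functions; the maximum-principle bound for $f_t$ then follows from the Markov (sub-stochastic) property alone, which avoids any delicate spectral theory. Once this bound is in hand, Holley--Stroock is purely functional-analytic and is insensitive to the drift.
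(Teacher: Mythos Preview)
Your proposal is correct and follows essentially the same approach as the paper: the paper also notes that $q_t := \rho_t/\pi$ satisfies the backward Kolmogorov equation, invokes the maximum principle to propagate the two-sided bound from Assumption~\ref{PGE}(b), and then applies the Holley--Stroock perturbation lemma. The paper omits details and refers to \cite[Proposition 3.1]{li2022sharp}, so your write-up is in fact more complete than what appears in the paper.
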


This result is well-known since $q_t := \rho_t / \pi$ evolves according to a backward Kolmogorov equation, which apparently satisfies the maximal principle. The claim then follows by combining the classical Holley-Stroock perturbation lemma \cite{bakry2013analysis}. We refer to \cite[Proposition 3.1]{li2022sharp} for a complete proof and omit the details here.

Different from the previous convergence analysis, in the case of SGLD we need a long-time moment bound. We shall prove that the moment bound for the numerical solution $\bar{X}$ of the modified tamed Euler in infinite time. We consider the Lyapunov function $e^{\delta\sqrt{1+|x|^2} }$ to prove the moment bound. 

    \begin{lemma} \label{UITMB}
        Recall the T-SGLD $\bar{X}(t)$ defined in \eqref{eq:tsgldcontinuous}. Suppose Assumptions \ref{SGLDA} and \ref{PGE} hold. Then there exists $\delta_0 > 0$ such that for any $\delta \in (0,\delta_0)$, there exists a constant $C$ such that
        \begin{equation}
            \sup_{t\geq 0} \mathbb{E}[e^{\delta\sqrt{1+|\bar{X}(t)|^2}}]<C.
        \end{equation}
Consequently, for any $p\geq 1$, there exists $C_p>0$ such that
\begin{equation}
    \sup_{t\geq 0}\mathbb{E}|\bar{X}(t)|^p \leq C_p.
\end{equation}
    \end{lemma}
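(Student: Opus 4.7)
The plan is to establish a one-step geometric Lyapunov inequality for $V(x):=e^{\delta u(x)}$ with $u(x):=\sqrt{1+|x|^2}$, iterate it across grid points, and then extend the bound to continuous times by a pathwise estimate on each $[T_k,T_{k+1}]$. The polynomial moment bound will follow at once from $|x|^p\leq C_{p,\delta}\,e^{\delta\sqrt{1+|x|^2}}$. The key geometric facts about $u$ are that it is $1$-Lipschitz, and that its Hessian $\nabla^2 u(z)=(I-zz^T/u(z)^2)/u(z)$ has operator norm at most $1/u(z)\leq 1$, so $u$ is $1$-smooth. Applying the smoothness bound to $\bar{X}_{k+1}=\bar{X}_k+hb^{\xi_k,h}(\bar{X}_k)+\sqrt{2\beta^{-1}h}\,\zeta_k$, multiplying by $\delta$, exponentiating, using $(a+b)^2\leq 2a^2+2b^2$, and evaluating the Gaussian integral via $\E[e^{a\cdot\zeta+b|\zeta|^2}]=(1-2b)^{-d/2}\exp(|a|^2/(2(1-2b)))$ (valid for $b<1/2$), one obtains
\begin{equation*}
\E[V(\bar{X}_{k+1})\mid\mathcal{F}_{T_k},\xi_k]\leq V(\bar{X}_k)\exp\!\Bigl(\delta h\,\tfrac{\bar{X}_k\cdot b^{\xi_k,h}(\bar{X}_k)}{u(\bar{X}_k)}+(2d\delta\beta^{-1}+\delta^2\beta^{-1})h+\delta h^2|b^{\xi_k,h}(\bar{X}_k)|^2+O(h^2)\Bigr).
\end{equation*}
The taming bound $|b^{\xi_k,h}|\leq 2h^{-\alpha}$ with $\alpha\leq 1/2$ renders the third summand $O(h)$.

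The dissipativity comes from Assumption \ref{SGLDA}(c): since $b^{\xi,h}$ is a positive scalar multiple of $b^\xi$, one still has $x\cdot b^{\xi,h}(x)/(|x||b^{\xi,h}(x)|)\leq -\mu$ on $\{|x|>R\}$, and a short case analysis on whether $h^\alpha|b^\xi(x)|$ lies in $[0,1]$, $(1,2)$, or $[2,\infty)$ produces a uniform lower bound $|b^{\xi,h}(x)|\geq c_0>0$ on this region for all small $h$. Since $|x|/u(x)\geq 1/\sqrt{2}$ when $|x|\geq 1$, this yields $\bar{X}_k\cdot b^{\xi_k,h}(\bar{X}_k)/u(\bar{X}_k)\leq -\mu c_0/\sqrt{2}$ on $\{|\bar{X}_k|\geq \max(R,1)\}$. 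Choosing $\delta_0$ so small that $\delta\mu c_0/\sqrt{2}$ strictly dominates the cumulative $O(\delta+\delta^2)$ Gaussian-and-smoothness contributions, the exponent is bounded by $-ch$ on the dissipative region; on its bounded complement $V$ is bounded by a constant and contributes only $Ch$. This produces the Lyapunov inequality
\begin{equation*}
\E[V(\bar{X}_{k+1})\mid\mathcal{F}_{T_k}]\leq(1-ch)V(\bar{X}_k)+Ch,
\end{equation*}
which iterated with the finite initial moment from Assumption \ref{PGE}(b) gives $\sup_k\E V(\bar{X}_k)<\infty$.

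To pass from grid points to all $t\in[T_k,T_{k+1}]$, the $1$-Lipschitz property of $u$ gives $V(\bar{X}(t))\leq V(\bar{X}(T_k))\exp(\delta|\bar{X}(t)-\bar{X}(T_k)|)$, and $|\bar{X}(t)-\bar{X}(T_k)|\leq 2h^{1-\alpha}+\sqrt{2\beta^{-1}}|W(t)-W(T_k)|$ has Gaussian exponential moment uniformly bounded for small $h$. The main obstacle is the delicate one-step accounting: one must verify that the tamed drift still enjoys one-sided dissipativity with magnitude bounded away from zero uniformly in $h$ and $\xi$, and then calibrate $\delta_0$ so that the linear-in-$\delta$ drift contraction wins against the Gaussian-smoothness corrections, which also scale linearly in $\delta$. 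Once this balance is secured, iteration together with the initial moment assumption closes the argument.
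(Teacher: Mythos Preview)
Your discrete one-step calculation is clean, but it contains a genuine gap that you yourself flag in the last paragraph without resolving. After applying the crude bound $\|\nabla^2 u\|\le 1$, the Gaussian correction in your exponent is $2d\beta^{-1}\delta h$, while the drift contraction on $\{|\bar X_k|>R\}$ is $-(\mu c_0/\sqrt{2})\,\delta h$. Both are \emph{linear} in $\delta h$, so shrinking $\delta$ does nothing to separate them: you would need $\mu c_0/\sqrt{2}>2d\beta^{-1}$ (plus the extra $4h^{1-2\alpha}$ from the tamed drift when $\alpha=1/2$), and nothing in Assumption \ref{SGLDA} guarantees this. Your phrase ``once this balance is secured'' is precisely where the proof is incomplete.

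The missing idea is that $\|\nabla^2 u(z)\|\le 1/u(z)$ actually \emph{decays} for large $|z|$, and this decay is what lets the Laplacian term be absorbed. The paper exploits this by working in continuous time: It\^o's formula evaluates $\nabla^2 u$ at $\bar X(t)$ itself, producing the term $\delta d/\sqrt{1+|\bar X(t)|^2}$, which on $\{|\bar X(t)|>R'\}$ is at most $\delta d/R'$. Introducing a second free radius $R'$ and taking it large makes this contribution fit into an $\epsilon\delta$ error with $\epsilon=C(\delta+R'^{-1})$ that \emph{can} be made small regardless of $\mu,b_0,d,\beta$; this is combined with a bootstrap showing $\sup_{\kappa(t)\le s\le t}\E f(\bar X(s))\le 2\E f(\bar X(\kappa(t)))$. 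Your discrete approach can be repaired along the same lines---on $\{|\bar X_k|>R'\}$ the segment from $\bar X_k$ to $\bar X_{k+1}$ stays in $\{|z|>R'/2\}$ except on a Gaussian-tail event, so the second-order Taylor remainder picks up a $1/R'$ factor---but this splitting is essential and is not in your write-up.
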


\begin{proof}
    Recall that the T-SGLD satisfies:
\begin{equation*}
    d\bar{X}(t)=b^{\xi,h}(\bar{X}(\kappa(t)))dt+\sqrt{2}dW_t.
\end{equation*}
We consider the Lyapunov function $f(x)=e^{\delta\sqrt{1+|x|^2}}$, here $\delta$ is a small positive constant to be determined.
It\^o's formula gives:
\begin{equation*}
    \begin{aligned}
        \frac{d}{dt}\mathbb{E}[f(\bar{X}(t))]=&\mathbb{E}\left[f(\bar{X}(t))\frac{\delta \bar{X}(t)}{\sqrt{1+|\bar{X}(t)|^2}}\cdot b^{\xi,h}(\bar{X}(\kappa(t)))\right]\\
        &+ \mathbb{E}\left[f(\bar{X}(t))\left(\frac{\delta^2|\bar{X}(t)|^2}{1+|\bar{X}(t)|^2}+\frac{\delta d}{(1+|\bar{X}(t)|^2)^\frac{1}{2}}-\frac{\delta |\bar{X}(t)|^2}{(1+|\bar{X}(t)|^2)^{\frac{3}{2}}}\right)\right]\\
        =:& E_1+E_2.\\
    \end{aligned}
\end{equation*}
    
For $E_1$, in order to make use of the dissipation assumption ( condition (c)  of Assumption \ref{SGLDA}), we use Ito's formula for $F(\bar{X}) := f(\bar{X})\frac{\delta \bar{X}}{\sqrt{1+|\bar{X}|^2}}$ to obtain that
\begin{equation*}
    \begin{aligned}
        E_1 & 
        \leq  \mathbb{E}\Big[f(\bar{X}(\kappa(t)))\frac{\delta \bar{X}(\kappa(t))}{\sqrt{1+|\bar{X}(\kappa(t))|^2}}\cdot b^{\xi,h}(\bar{X}(\kappa(t)))\\
        &+\int_{\kappa(t)}^t f(\bar{X}(s))(c_1\delta^3+c_2\frac{\delta^2}{\sqrt{1+|\bar{X}(s)|^2}}+c_3\frac{\delta}{1+|\bar{X}(s)|^2})|b^{\xi,h}(\bar{X}(\kappa(t)))| ds\\
        &+\int_{\kappa(t)}^t f(\bar{X}(s))(c_4\delta^2+c_5\frac{\delta}{\sqrt{1+|\bar{X}(s)|^2}})|b^{\xi,h}(\bar{X}(\kappa(t)))|^2ds\Big]\\
        =:&I_1+I_2+I_3.\\
    \end{aligned}
\end{equation*}
The term $I_1$ gives the main source of the dissipation and is essential to make the bound uniform-in-time. Indeed, by condition (c) of Assumption \ref{SGLDA} (also recall the notations $\mu$, $R$ and $b_0$ therein).  
Note that
\begin{gather*}
\frac{\delta \bar{X}(\kappa(t))}{\sqrt{1+|\bar{X}(\kappa(t)}|^2}\cdot \frac{b^\xi(\bar{X}_{\kappa(t)})}{1+\psi(  h^\alpha |b^\xi|)}
\le -\mu \delta \frac{b_0}{1+h^{\alpha}b_0} \chi_{\{|\bar{X}(\kappa(t))|\ge R\}}+C\delta \chi_{\{|\bar{X}|<R\}},
\end{gather*}
where $\sup_{|x|\le R, \xi}|b^{\xi}(x)|<\infty$ by condition (a) and (b) of Assumption \ref{SGLDA}.
Since $b_0/(1+h^{\alpha}b_0)$ is bounded below by $\min(b_0, h^{-\alpha})/2$, one has
\begin{equation*}
 I_1 \leq -c_6 \delta \mathbb{E}[f(\bar{X}(\kappa(t)))] + c_7 \delta .
\end{equation*}

For $I_2$, noting $|b^{\xi, h}|\le h^{-\alpha}$, one has 
\begin{equation*}
\begin{aligned}
    I_2 \leq &  h^{-\alpha}\int_{\kappa(t)}^t \Big(\mathbb{E}[e^{\delta\sqrt{1+|\bar{X}(s)|^2}}(c_8 \delta^2+c_3\delta {R'}^{-2})  \chi_{\{|\bar{X}(s)| \geq R' \}} ]
    + \mathbb{E}[e^{\delta\sqrt{1+{R'}^2}}c_9 \delta \chi_{\{|\bar{X}(s)| < R' \}} ] \Big) ds\\
    \leq & c_{10} h^{1-\alpha}(\delta^2 + \delta {R'}^{-2})\sup_{\kappa(t)\leq s \leq t} \mathbb{E}[e^{\delta\sqrt{1+|\bar{X}(s)|^2}}] + c_{11}h^{1-\alpha} \delta.
\end{aligned}
\end{equation*}
Similarly, 
\begin{equation*}
    I_3 \leq c_{12} h^{1-2\alpha}(\delta^2 + \delta {R'}^{-1})\sup_{\kappa(t)\leq s \leq t} \mathbb{E}[e^{\delta\sqrt{1+|\bar{X}(s)|^2}}] + c_{13}h^{1-2\alpha} \delta.
\end{equation*}
The treatment for $E_2$ is similar (and simpler as there is no $b^{\xi, h}$). 

Combining the above, for $h, \delta < 1$, since $\alpha \in (0,\frac{1}{2}]$, we have
\begin{equation}\label{eq:ddtEfX}
   \begin{aligned}
        \frac{d}{dt}\mathbb{E}[f(\bar{X}_t)]
        \leq  C_1'\delta  -C_2' \delta \mathbb{E}[f(\bar{X}(\kappa(t)))]+\epsilon \delta \sup_{\kappa(t)\leq s \leq t} \mathbb{E}[f(\bar{X}(s))],
    \end{aligned}
\end{equation}
where  $\epsilon := C(\delta + {R'}^{-1})$ can be arbitrarily small for small $\delta$ and large $R'$. Noting $f\ge 1$, one then has
\begin{multline*}
    \mathbb{E}[f(\bar{X}(t))] \leq \mathbb{E}[f(\bar{X}_{\kappa(t)})] + C_1' \delta h +\epsilon \delta h \sup_{\kappa(t)\leq s \leq t} \mathbb{E}[f(\bar{X}(s))]\\
    \le (1+C_1'\delta h)\mathbb{E}[f(\bar{X}_{\kappa(t)})] + \epsilon \delta h \sup_{\kappa(t)\leq s \leq t} \mathbb{E}[f(\bar{X}(s))]. 
\end{multline*}
Consequently, for small $h$,  one has
\begin{equation}\label{eq:localgrowthfx}
    \sup_{\kappa(t)\leq s\leq t} \mathbb{E}[f(\bar{X}(s))]\leq 2\mathbb{E}[f(\bar{X}(\kappa(t)))],\quad \forall t>0.
\end{equation}
Hence, combining \eqref{eq:localgrowthfx} and \eqref{eq:ddtEfX}, it holds that
\begin{equation}\label{eq:EfbarXt}
    \mathbb{E}[f(\bar{X}(t))]\leq \left(1-(C'_2-2\epsilon)\delta h\right)\mathbb{E}[f(\bar{X}(\kappa(t)))]+C'_1\delta h.
\end{equation}
Taking $t=T_k = kh$, $k = 0,1,2,\dots$, choosing small $\delta$ and large $R'$ such that $2\epsilon < C_2'$, one then has
\begin{equation*}
    \sup_k \mathbb{E}[f(\bar{X}(T_k))]<\infty,
\end{equation*}
which along with \eqref{eq:EfbarXt} then implies the desired result.

The moment bound is clearly a direct consequence.
\end{proof}

 To perform the error estimate for the laws, we need to find the evolution of the density of the law. 
 The following equation gives the evolution of the density of the laws.    The proof is well known and is mainly based on Bayes' formula. Its derivations can be found in some related literature, for instance \cite[Lemma 1]{mou2022improved}, \cite[Lemma 2.1]{li2022sharp}, \cite[Lemma 5.1]{li2024error}. We omit the details here.
 \begin{lemma}\label{FP}
 Consider the following time-continuous interpolation of Euler type iteration for $t\in (T_{k-1}, T_k]$ (also recall that $T_k = kh$):
 \begin{equation}
     d \hat{X}_E(t)= b_{k-1}\Big(\hat{X}_E\big(\kappa(t)\big)\Big)dt+ \sigma_{k-1}\Big(\hat{X}_E\big(\kappa(t)\big)\Big)d {W}_t,
 \end{equation}
 and denote its time marginal law by $\hat{\rho}_t$ for $t\geq 0$. Then
        the following Fokker-Planck-type equation holds for $\hat{\rho}_t$: 
       \begin{gather}
       \partial_t \bar{\rho}_t=-\nabla \cdot (\hat{\rho}_t\hat{b}_t)+\frac{1}{2}\nabla^2:( \hat{\Lambda}_t\hat{\rho}_t).
       \end{gather}
If we denote $\Lambda_{k-1}:=\sigma_{k-1}\sigma_{k-1}^T$, then
 \begin{gather}
 \hat{b}_t(x)=\mathbb{E}[ b_{k-1}\left(\hat{X}_E(T_{k-1})\right)|\hat{X}_E(t)=x], \quad 
 \hat{\Lambda}_t(x)=\mathbb{E}[ \Lambda_{k-1}\left(\hat{X}_E(T_{k-1})\right)|\hat{X}_E(t)=x].
\end{gather}
    \end{lemma}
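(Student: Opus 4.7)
The plan is to follow the hint about Bayes' formula. For fixed $k$ and $t\in(T_{k-1},T_k]$, freeze the value $\hat{X}_E(T_{k-1})=y$; conditionally on this event, the drift $b_{k-1}(y)$ and diffusion $\sigma_{k-1}(y)$ are deterministic constants, so the conditional law of $\hat{X}_E(t)$ given $\hat{X}_E(T_{k-1})=y$ is Gaussian with an explicit transition density $p_t(y,x)$. This density satisfies, in the $x$-variable, the classical constant-coefficient Fokker--Planck equation
\begin{equation*}
\partial_t p_t(y,x) = -\nabla_x\cdot\bigl(b_{k-1}(y)\,p_t(y,x)\bigr) + \tfrac{1}{2}\nabla_x^2:\bigl(\Lambda_{k-1}(y)\,p_t(y,x)\bigr),
\end{equation*}
with $\Lambda_{k-1}=\sigma_{k-1}\sigma_{k-1}^T$.

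Next, I would write the marginal density as $\hat{\rho}_t(x)=\int p_t(y,x)\,\hat{\rho}_{T_{k-1}}(y)\,dy$ and differentiate in $t$ under the integral sign, using the above equation for $p_t$, to obtain
\begin{equation*}
\partial_t\hat{\rho}_t(x) = -\nabla_x\cdot\int b_{k-1}(y)\,p_t(y,x)\hat{\rho}_{T_{k-1}}(y)\,dy + \tfrac{1}{2}\nabla_x^2:\int \Lambda_{k-1}(y)\,p_t(y,x)\hat{\rho}_{T_{k-1}}(y)\,dy.
\end{equation*}
The key identification step is Bayes' rule: the conditional density of $\hat{X}_E(T_{k-1})$ given $\hat{X}_E(t)=x$ equals $p_t(y,x)\hat{\rho}_{T_{k-1}}(y)/\hat{\rho}_t(x)$. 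Substituting this into the two integrals, they become exactly $\hat{\rho}_t(x)\hat{b}_t(x)$ and $\hat{\rho}_t(x)\hat{\Lambda}_t(x)$ in accordance with the definitions stated in the lemma, which yields the desired Fokker--Planck-type equation.

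An alternative route, which gives essentially the same proof but avoids constructing $p_t$ explicitly, is to test against an arbitrary $\phi\in C_c^\infty(\mathbb{R}^d)$. Apply It\^o's formula to $\phi(\hat{X}_E(t))$ and differentiate in $t$ to get
\begin{equation*}
\tfrac{d}{dt}\mathbb{E}[\phi(\hat{X}_E(t))] = \mathbb{E}\Bigl[\nabla\phi(\hat{X}_E(t))\cdot b_{k-1}(\hat{X}_E(T_{k-1})) + \tfrac{1}{2}\Lambda_{k-1}(\hat{X}_E(T_{k-1})):\nabla^2\phi(\hat{X}_E(t))\Bigr].
\end{equation*}
The tower property, conditioning on $\hat{X}_E(t)=x$, pulls $\nabla\phi(x)$ and $\nabla^2\phi(x)$ out and replaces the random $b_{k-1},\Lambda_{k-1}$ by $\hat{b}_t(x),\hat{\Lambda}_t(x)$; integration by parts then delivers the same PDE in the weak sense because $\phi$ is arbitrary.

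The main obstacle is purely technical: justifying differentiation under the integral and the integration by parts. This requires enough integrability of $\hat{\rho}_{T_{k-1}}$ together with the polynomial growth of $b_{k-1}$, $\Lambda_{k-1}$ (which hold by Assumption \ref{SEA} plus the taming bound $|b^{\xi,h}|\le C h^{-\alpha}$, and the moment estimates of Lemmas \ref{SE2} and \ref{UITMB}). Since these growth and moment facts are already in hand, the argument is standard and one can legitimately invoke the cited references \cite{mou2022improved, li2022sharp, li2024error}. For this reason I would present only the short Bayes-based derivation and defer further details.
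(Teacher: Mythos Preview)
Your proposal is correct and follows exactly the approach the paper indicates: the paper does not give a detailed proof but states that it ``is mainly based on Bayes' formula'' and refers to \cite{mou2022improved, li2022sharp, li2024error}, which is precisely your freeze-the-starting-point / conditional Fokker--Planck / Bayes identification argument (with the It\^o-plus-tower-property variant being the standard weak-form alternative).
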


Recall that the definition of $\rho_t^{(\b{\xi},h)}$ defined near \eqref{eq:rhoxikh_def} is the law of the T-SGLD \eqref{eq:tsgldcontinuous} conditioning on the given sequence of batches $\b{\xi}=(\xi_0, \xi_1, \cdots, \xi_k, \cdots)$. In Lemma \ref{lmm:fisher1} below, we first establish 
\begin{equation*}
    \frac{d}{dt} \mathcal{H}(\rho^{(\b{\xi},h)}_t|\pi) \leq -c_0 \mathcal{I}(\rho^{(\b{\xi},h)}_t) + c_1,
\end{equation*}
where we recall $\pi \propto e^{-\beta U}$ and consequently
\begin{equation*}
    \mathcal{H}(\rho^{(\b{\xi},h)}_t|\pi) \leq c,
\end{equation*}
where $c_0$, $c_1$, $c$ are positive constants independent of $t$ and $\b{\xi}$. Then, after similar calculation in of \cite[Lemma 3.2]{li2022sharp}, we are able to prove the following Lemma:  
\begin{lemma}\label{lmm:fisher1}
Suppose Assumption \ref{SGLDA} and \ref{PGE} hold. Recall the definition of $\rho^{(\b{\xi},h)}_t$ near \eqref{eq:rhoxikh_def}. For any $A_0>0$, there exist a positive constant $M$
independent of $\b{\xi}$ such that for small time step $h$, the followings hold:
\begin{equation}
    \sup_{t\geq 0}\int_0^t e^{-A_0(t-s)}\mathcal{I}(\rho^{(\b{\xi},h)}_s)ds
    \leq M.
\end{equation}
\end{lemma}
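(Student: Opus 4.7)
The plan is to adapt the strategy of \cite[Lemma 3.2]{li2022sharp} to the present super-linear drift setting, making essential use of the uniform moment bound from Lemma \ref{UITMB} and the high-order taming error estimate from Lemma \ref{SE4}. First, from the Fokker--Planck equation for $\rho_t^{(\b{\xi},h)}$ derived in the manner of Lemma \ref{FP}, which on each interval $[T_k,T_{k+1})$ has effective drift $\hat{b}_t^{\xi_k,h}(x)=\mathbb{E}[b^{\xi_k,h}(\bar{X}(T_k))\mid\bar{X}(t)=x,\b{\xi}]$ and diffusion $\sqrt{2\beta^{-1}}$, I would compute the time derivative of the relative entropy against $\pi\propto e^{-\beta U}$. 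Using $-\beta^{-1}\nabla\log\pi=-b$, a standard integration by parts gives
\begin{equation*}
\frac{d}{dt}\mathcal{H}(\rho_t^{(\b{\xi},h)}|\pi) = -\beta^{-1}\mathcal{I}(\rho_t^{(\b{\xi},h)}|\pi) + \int \nabla\log\frac{\rho_t^{(\b{\xi},h)}}{\pi}\cdot(\hat{b}_t^{\xi_k,h}-b)\,\rho_t^{(\b{\xi},h)}\,dx.
\end{equation*}
The cross term is absorbed by Young's inequality, which transfers half of the Fisher information into the dissipation and leaves $C\int|\hat{b}_t^{\xi_k,h}-b|^2\rho_t^{(\b{\xi},h)}\,dx$ as the residual.

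The main obstacle is bounding this residual uniformly in $t$, $h$, and $\b{\xi}$ despite the super-linear growth of $b$. Jensen's inequality applied to the conditional expectation yields the pointwise estimate $|\hat{b}_t^{\xi_k,h}(x)-b(x)|^2 \leq \mathbb{E}[|b^{\xi_k,h}(\bar{X}(T_k))-b(\bar{X}(t))|^2\mid\bar{X}(t)=x,\b{\xi}]$, after which I would decompose
\begin{equation*}
b^{\xi_k,h}(\bar{X}(T_k))-b(\bar{X}(t)) = (b^{\xi_k,h}-b^{\xi_k})(\bar{X}(T_k)) + (b^{\xi_k}-b)(\bar{X}(T_k)) + \bigl(b(\bar{X}(T_k))-b(\bar{X}(t))\bigr).
\end{equation*}
The taming piece is $O(h^q)$ in $L^2$ for arbitrarily large $q$ by Lemma \ref{SE4}; the random-batch piece is uniformly bounded thanks to Assumption \ref{SGLDA}(b); and the remaining piece is controlled via the mean-value theorem together with $|\nabla b|\leq C(|b|+1)$ from Lemma \ref{DPL}, Cauchy--Schwarz, the moment bound in Lemma \ref{UITMB}, and a short-time H\"older estimate on $\bar{X}(t)-\bar{X}(T_k)$ analogous to Lemma \ref{SE3}. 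Combining these ingredients yields the desired uniform bound on the residual.

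Plugging this back produces a differential inequality of the form $\tfrac{d}{dt}\mathcal{H}(\rho_t^{(\b{\xi},h)}|\pi)\leq -c_0\,\mathcal{I}(\rho_t^{(\b{\xi},h)}|\pi) + c_1$ with $c_0,c_1$ independent of $t$, $h$, and $\b{\xi}$. Invoking the LSI of $\pi$ from Assumption \ref{PGE}(a), which gives $\mathcal{H}\leq C^{LS}_\pi\mathcal{I}$, and Gr\"onwall's inequality with the finite initial entropy from Assumption \ref{PGE}(b), one obtains the auxiliary uniform-in-time bound $\sup_{t\geq 0}\mathcal{H}(\rho_t^{(\b{\xi},h)}|\pi)\leq C$.

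To extract the weighted Fisher-information estimate in the statement, I would multiply the dissipation inequality by $e^{-A_0(t-s)}$, integrate over $s\in[0,t]$, and integrate by parts in $s$ to obtain
\begin{equation*}
c_0\int_0^t e^{-A_0(t-s)}\mathcal{I}(\rho_s^{(\b{\xi},h)}|\pi)\,ds \leq e^{-A_0 t}\mathcal{H}(\rho_0|\pi) - \mathcal{H}(\rho_t^{(\b{\xi},h)}|\pi) + A_0\int_0^t e^{-A_0(t-s)}\mathcal{H}(\rho_s^{(\b{\xi},h)}|\pi)\,ds + \frac{c_1}{A_0}.
\end{equation*}
The right-hand side is uniformly bounded since $\mathcal{H}$ is and $A_0\int_0^t e^{-A_0(t-s)}ds\leq 1$, which gives the claimed estimate in the case where $\mathcal{I}$ denotes Fisher information relative to $\pi$. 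If instead $\mathcal{I}(\rho_s^{(\b{\xi},h)})$ in the statement is read as the absolute Fisher information, a final conversion using $|\nabla\log\rho|^2\leq 2|\nabla\log(\rho/\pi)|^2 + 2\beta^2|\nabla U|^2$, the polynomial growth of $\nabla U$, and the moment bound from Lemma \ref{UITMB} completes the proof.
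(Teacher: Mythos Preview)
Your proposal is correct and follows essentially the same route as the paper: compute the time derivative of $\mathcal{H}(\rho_t^{(\b{\xi},h)}|\pi)$ from the Fokker--Planck equation, extract a Fisher-information dissipation via Young's inequality, bound the remainder using the uniform moment control of Lemma \ref{UITMB}, use the LSI to get a uniform entropy bound, and finish by multiplying by $e^{-A_0(t-s)}$ and integrating by parts.

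Two small organizational differences are worth noting. First, the paper expands the product $\int(\nabla\log\rho-\beta b)\cdot(\rho\hat{b}-\beta^{-1}\nabla\rho)\,dx$ so that the dissipative term is the \emph{absolute} Fisher information $-\beta^{-1}\mathcal{I}(\rho)$ from the outset; your version naturally produces the \emph{relative} Fisher information $-\beta^{-1}\mathcal{I}(\rho|\pi)$ and you convert at the end via $|\nabla\log\rho|^2\leq 2|\nabla\log(\rho/\pi)|^2+2\beta^2|\nabla U|^2$. Both are fine. Second, your three-term decomposition of $\hat{b}_t^{\xi_k,h}-b$ (taming error, random-batch error, time-discretization error) is more refined than necessary here: the paper simply observes that both $b$ and $b^{\xi,h}$ have polynomial growth, so $\int|\hat{b}|^2\rho\,dx$ and $\int|b|^2\rho\,dx$ are each uniformly bounded by Jensen and Lemma \ref{UITMB}. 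Your decomposition is not wrong, just more work than the constant $c_1$ requires.
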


The detailed derivation is very similar to that of \cite[Lemma 3.2]{li2022sharp}, and we put a sketch in Appendix \ref{sec:fisher1}.

Moreover, during the main proof, we will need the following (time) local growth result for the Fisher information. Remarkably, the proof relies on the fact that the diffusion coefficient is constant, so the density can be written as the convolutional form.

\begin{lemma}\label{lmm:fisher2}
Suppose Assumptions \ref{SGLDA} and \ref{PGE} hold. Fix any $\Delta t \in [0,h]$, and denote $p_{k,\Delta t}(\cdot)$ the density of the random variable $\phi_{\Delta t}(\bar{X}(T_{k-1}))$ with $\phi_{\Delta t}(x) := x + \Delta t b^{\xi,h}(x)$.
Then for small $h$, there exists a positive constant $C$ independent of $h$, $\xi$, $k$, $\Delta t$ such that
\begin{equation}\label{eq:fisherlocal1}
    \int p_{k,\Delta t}(x)|\nabla \log p_{k,\Delta t}(x)|^2 dx \leq 4\mathcal{I}(\rho^{(\b{\xi},h)}_{t_{k-1}}) + C(\Delta t)^{2}.
\end{equation}
Consequently,
\begin{equation}\label{eq:fisherlocal2}
    \mathcal{I}(\rho^{(\b{\xi},h)}_{T_{k-1}+\Delta t}) \leq 4\mathcal{I}(\rho^{(\b{\xi},h)}_{T_{k-1}}) + C(\Delta t)^{2}.
\end{equation}
\end{lemma}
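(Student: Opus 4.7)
The plan is to first establish the pushforward estimate \eqref{eq:fisherlocal1} by direct change of variables, and then deduce \eqref{eq:fisherlocal2} by exploiting the convolutional structure of the one-step update, which is available precisely because the diffusion coefficient $\sqrt{2\beta^{-1}}\,I$ is constant.

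For \eqref{eq:fisherlocal1}, I write $\rho := \rho^{(\b{\xi},h)}_{T_{k-1}}$ and $J(x) := \det(I + \Delta t\,\nabla b^{\xi,h}(x))$. Since $p_{k,\Delta t}$ is the pushforward of $\rho$ under the diffeomorphism $\phi_{\Delta t}$, the change-of-variables formula gives $p_{k,\Delta t}(\phi_{\Delta t}(x)) = \rho(x)/J(x)$. Differentiating in $y=\phi_{\Delta t}(x)$ by the chain rule and pulling the integral back to $x$ yields
\begin{equation*}
\int p_{k,\Delta t}|\nabla \log p_{k,\Delta t}|^2\,dy = \int \rho(x)\,\bigl|(\nabla \phi_{\Delta t}(x))^{-T}\bigl(\nabla \log \rho(x) - \nabla \log J(x)\bigr)\bigr|^2 dx.
\end{equation*}
I then apply $|a-b|^2 \le 2|a|^2+2|b|^2$ to split this into a score part and a Jacobian part. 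For the score part, Lemma \ref{DPL} gives $|\nabla b^{\xi,h}| \le C(|b^{\xi,h}|+1) \le Ch^{-\alpha}$, hence $\Delta t\,|\nabla b^{\xi,h}| \le Ch^{1-\alpha} \to 0$ uniformly in $x$, so $|(\nabla \phi_{\Delta t})^{-T}|^2_{\mathrm{op}} \le 2$ for $h$ small, yielding the prefactor $4\mathcal{I}(\rho)$. For the Jacobian part, expanding $\log J = \Delta t\,\nabla \cdot b^{\xi,h} + O((\Delta t)^2|\nabla b^{\xi,h}|^2)$ and differentiating gives $|\nabla \log J|^2 \le C(\Delta t)^2\bigl(|\nabla^2 b^{\xi,h}|^2 + (\Delta t)^2 |\nabla b^{\xi,h}|^2|\nabla^2 b^{\xi,h}|^2\bigr)$. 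By Lemma \ref{DPL2}, $|\nabla^2 b^{\xi,h}| \le C(1+|x|^{\ell'})$, and since $(\Delta t)^2|\nabla b^{\xi,h}|^2 \le h^{2(1-\alpha)} \le 1$ for small $h$, one obtains $|\nabla \log J|^2 \le C(\Delta t)^2(1+|x|^{2\ell'})$. Integrating against $\rho$ and invoking the uniform-in-time moment bound of Lemma \ref{UITMB} gives the remainder $C(\Delta t)^2$.

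For \eqref{eq:fisherlocal2}, observe that conditional on $\b{\xi}$ the scheme satisfies
\begin{equation*}
\bar{X}(T_{k-1}+\Delta t) = \phi_{\Delta t}(\bar{X}(T_{k-1})) + \sqrt{2\beta^{-1}\Delta t}\,Z, \quad Z \sim N(0,I) \text{ indep.},
\end{equation*}
so $\rho^{(\b{\xi},h)}_{T_{k-1}+\Delta t} = p_{k,\Delta t} * g_{\Delta t}$, where $g_{\Delta t}$ is the Gaussian density with covariance $2\beta^{-1}\Delta t\,I$. Since the score of a convolution equals the conditional expectation of the score, i.e.\ $\nabla \log(p*g)(y) = \mathbb{E}[\nabla \log p(X) \mid X + \sqrt{2\beta^{-1}\Delta t}Z = y]$, Jensen's inequality yields $\mathcal{I}(p_{k,\Delta t} * g_{\Delta t}) \le \mathcal{I}(p_{k,\Delta t})$. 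Combined with \eqref{eq:fisherlocal1} this proves \eqref{eq:fisherlocal2}.

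The main obstacle is the Jacobian estimate: although $|\nabla b^{\xi,h}|$ is only $O(h^{-\alpha})$ rather than $O(1)$, one must carefully verify that the factors $h^{-\alpha}$ in both $(\nabla \phi_{\Delta t})^{-T}$ and $\nabla \log J$ are absorbed by $\Delta t \le h$ together with $\alpha < 1$, so that the sharp constant $4$ in front of $\mathcal{I}(\rho)$ and the $O((\Delta t)^2)$ remainder both emerge uniformly in $\xi$ and $k$. The requirement $\alpha \le 1/2$ is exactly what guarantees $h^{2-2\alpha} \to 0$, closing the estimate.
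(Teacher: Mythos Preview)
Your proposal is correct and follows essentially the same route as the paper: the pushforward computation for \eqref{eq:fisherlocal1} via change of variables, the splitting $|a-b|^2\le 2|a|^2+2|b|^2$, the bound $|(\nabla\phi_{\Delta t})^{-1}|\le 1+O(h^{1-\alpha})$ from Lemma~\ref{DPL}, and the polynomial control of $\nabla^2 b^{\xi,h}$ from Lemma~\ref{DPL2} together with Lemma~\ref{UITMB} are exactly what the paper does. For \eqref{eq:fisherlocal2} the paper invokes Stam's convolution inequality to get $\mathcal{I}(p_{k,\Delta t}*g_{\Delta t})\le \mathcal{I}(p_{k,\Delta t})$, whereas you prove this monotonicity directly via the identity $\nabla\log(p*g)(y)=\mathbb{E}[\nabla\log p(X)\mid X+N=y]$ and Jensen; this is simply one of the standard proofs of that direction of Stam's inequality, so the two arguments coincide.
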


\begin{proof}
We first prove the first claim \eqref{eq:fisherlocal1}. Clearly,
\begin{equation*}
\nabla \phi_{\Delta t}(x) = I + \Delta t \nabla b^{\xi_k,h}(x),\quad \nabla^2 \phi_{\Delta t}(x) = \Delta t \nabla^2 b^{\xi_k,h}(x).
\end{equation*}
By Lemma \ref{DPL}, we know that
\begin{equation*}
    |\nabla b^{\xi,h}(x)|\leq C(b^{\xi_k,h}(x) + 1) \leq C((\Delta t)^{-\alpha} + 1),
\end{equation*}
which implies for small $h$, $\nabla \phi_{\Delta t}$ is reversible and
\begin{equation*}
    |\nabla \phi_{\Delta t}^{-1}| \leq 1 + 2C(\Delta t)^{1-\alpha} .
\end{equation*}
Noting $p_{k,\Delta t}(z) \det(\nabla \phi_{\Delta t}(x))=\rho^{(\b{\xi},h)}_{T_{k-1}}(x)$, one has by change of variable and Young's inequality that
\begin{equation*}
\begin{aligned}
&\quad \int p_{k,\Delta t}(z) |\nabla_x \log p_{k,\Delta t}(z)|^2 dz\\
&=\int \rho^{(\b{\xi},h)}_{T_{k-1}}(x) |\nabla_x\phi_{\Delta t}(x)^{-1} (\nabla_x \log \rho^{(\b{\xi},h)}_{T_{k-1}}(x) - \nabla_x \log \det (\nabla_x \phi_{\Delta t}(x)))|^2 dx\\
    &\leq \int \rho^{(\b{\xi},h)}_{T_{k-1}}(x) |\nabla_x\phi_{\Delta t}(x)^{-1}|^2 \left(2 |\nabla_x \log \rho^{(\b{\xi},h)}_{T_{k-1}}(x)|^2 + 2 |\nabla_x \log \det (\nabla_x \phi_{\Delta t}(x)))|^2\right) dx\\
    &\leq 2(1 + 2C(\Delta t)^{1-\alpha})^2  \mathcal{I}(\rho^{(\b{\xi},h)}_{T_{k-1}}) + 2(1 + C(\Delta t)^{1-\alpha})^4 (\Delta t)^2 \int \rho^{(\b{\xi},h)}_{T_{k-1}}(x)  |\nabla^2_x b^{\xi_k,h}(x)|^2 dx\\
    &\leq 4\mathcal{I}(\rho^{(\b{\xi},h)}_{T_{k-1}}) +  C (\Delta t)^2.
\end{aligned}
\end{equation*}
Above, the value of $C$ may vary from line to line but are all independent of $\xi$, $h$ and $k$, and the third inequality above is due to the polynomial bound for $b^{\xi_k,h}(\cdot)$ and the moment bound in Lemma \ref{UITMB}.

The second claim \eqref{eq:fisherlocal2} is a direct consequence of the first claim \eqref{eq:fisherlocal1} and the Stam's convolution inequality \cite{stam1959some}. In fact, the Stam's convolution inequality states that for any two probability measures $\rho_1$ and $\rho_2$, the inverse of the Fisher information satisfies
\begin{equation*}
    \frac{1}{\mathcal{I}(\rho_1 * \rho_2)} \geq \frac{1}{\mathcal{I}(\rho_1)} + \frac{1}{\mathcal{I}(\rho_2)},
    \end{equation*}
  which implies that $\mathcal{I}(\rho_i)\le \mathcal{I}(\rho_1 * \rho_2)$.
Here the notation $*$ represents the convolution. Since $\rho^{(\b{\xi},h)}_{T_{k-1}+\Delta t} = p_{k,\Delta t} * \mathcal{N}(0,2\beta^{-1}\Delta t)$, the desired bound then follows.
\end{proof}

\subsection{relative entropy error bound for the tamed SGLD algorithm}\label{eq:mainproofsgld}
	With the lemmas above, we can give the relative entropy error bound.

	\begin{proof}[Proof of Theorem \ref{thm:tsgld}]

The main proof idea and structure are similar with the proof of Theorem 3.2 in \cite{li2022sharp}.

		\textbf{Step 1}: Calculate $\frac{d}{dt} \mathcal{H}(\bar{\rho}_t^{(h)}| \rho_t) $.

Recall the definitions of $\bar{\rho}_{t}^{(h)}$, $\rho_{t}^{(\b{\xi},h)}$, $\bar{\rho}_t^{(\xi_k,h)}$ near \eqref{eq:rhoxikh_def} above.
Also recall that due to Lemma \ref{FP},
\begin{equation}\label{FP_xi}
    \partial_t \bar{\rho}_t^{(\xi_k,h)} = -\nabla \cdot (\bar{\rho}_t^{(\xi_k,h)} \hat{b}_t^{\xi_k,h}) + \beta^{-1} \Delta \bar{\rho}_t^{(\xi_k,h)},\quad \bar{\rho}_{T_k}^{(\xi_k,h)}
    =\bar{\rho}_{T_k}^{(h)},
\end{equation}
where $t \in [T_k,T_{k+1})$ and
\begin{equation*}
    \hat{b}^{\xi_k,h}_t(x) := \mathbb{E}\left[b^{\xi_k,h}\left( \bar{X}_{T_k}\right) |\bar{X}_t = x, \xi_k\right], \quad t \in \left[T_k,T_{k+1}\right).
\end{equation*}

Next, we calculate $\frac{d}{dt} \mathcal{H}(\bar{\rho}^{(h)}_t|\rho_t)$ based on \eqref{FP_xi}.
For $t \in [T_k,T_{k+1})$, by \eqref{FP_xi}, we have
\begin{equation}\label{FPbarpi}
\partial_t \bar{\rho}_t^{(h)} = \mathbb{E}_{\xi_k}
\left[-\nabla \cdot (\hat{b}_t^{\xi_k,h} \bar{\rho}_t^{(\xi_k,h)})\right] + \beta^{-1} \Delta \bar{\rho}^{(h)}_t.
\end{equation}
Then direct calculation gives 
\begin{equation*}
    \begin{aligned}
			&\frac{d}{dt}\mathcal{H}(\bar{\rho}_t^{(h)}| \rho_t)  \\
			=&  -\beta^{-1} \int |\nabla \log \frac{\bar{\rho}_t^{(h)}}{\rho_t}|^2\bar{\rho}_t^{(h)}dx + \int \mathbb{E}_{\xi_k}\left[\left(\bar{\rho}_t^{(\xi_k, h)}\hat{b}_t^{\xi_k,h}-\bar{\rho}^{(h)}_tb^{\xi_k}\right)\cdot\nabla \log \frac{\bar{\rho}_t^{(h)}}{\rho_t}\right]dx\\
			& + \int \mathbb{E}_{\xi_k} \left[ (b^{\xi_k}-b)(\bar{\rho}_t^{(\xi_k, h)}-\bar{\rho}_t^{(h)})\nabla \log \frac{\bar{\rho}_t^{(h)}}{\rho_t}\right]dx\\
			\leq & \beta\int \mathbb{E}_{\xi_k}\left[\bar{\rho}_t^{(\xi_k, h)}|\hat{b}_t^{\xi_k,h}-b|^2\right]dx+
            \beta\int \mathbb{E}_{\xi_k\tilde{\xi}_k}\left[|b^{\xi_k}-b|^2\frac{|\bar{\rho}_t^{(\xi_k, h)}-\bar{\rho}_t^{(\tilde{\xi}_k,h)}|^2}{\bar{\rho}_t^{(\tilde{\xi}_k,h)}}\right]dx\\
            &- \frac{1}{2\beta}\int |\nabla \log \frac{\bar{\rho}_t^{(h)}}{\rho_t}|^2\bar{\rho}_t^{(h)}dx. \\
		\end{aligned}
\end{equation*}
Above, we have used Young's inequality in the last inequality. Moreover, $\tilde{\xi}_k$ is another independently chosen random batch at $k$-th time interval, and $\bar{\rho}_t^{(\tilde{\xi}_k,h)}$ is the law corresponding to another realization $\bar{X}'$ of SGLD that shares the same initial ($\bar{X}'_{T_k} = \bar{X}_{T_k}$) and the same Brownian motion in $[T_k, T_{k+1})$ with $\bar{X}$. We further split $|\hat{b}^{\xi_k,h}-b|^2$ via triangular inequality and obtain that
\begin{equation}\label{eq:j1j2j3j4}
    \begin{aligned}
			\frac{d}{dt}\mathcal{H}(\bar{\rho}_t^{(h)}| \rho_t) 
			\leq& 2\beta \mathbb{E}_{\xi_k}\mathbb{E}[|\hat{b}_t^{\xi_k,h}- b^{\xi_k, h}|^2(\bar{X}(t)) |\xi_k]+
            2\beta\mathbb{E}_{\xi_k}\mathbb{E}[| b^{\xi_k, h}-b^{\xi_k}|^2(\bar{X}(t))|\xi_k]\\
            &+			\beta\mathbb{E}_{\xi_k,\tilde{\xi}_k} \int |b^{\xi_k}-b|^2\frac{|\bar{\rho}_t^{(\xi_k, h)}-\bar{\rho}_t^{(\tilde{\xi}_k,h)}|^2}{\bar{\rho}_t^{(\tilde{\xi}_k,h)}}dx
             - \frac{1}{2\beta}\int |\nabla \log \frac{\bar{\rho}_t^{(h)}}{\rho_t}|^2\bar{\rho}_t^{(h)}dx \\
			=&J_1+J_2+J_3+J_4
		\end{aligned}
\end{equation}

Note that $\rho_t$ satisfies a uniform-in-time log-Sobolev inequality due to Proposition \ref{prop:lsi}. Therefore,
\begin{equation*}
    J_4 \leq -\frac{1}{4\beta\lambda^2 C^{LS}_{\pi}}\mathcal{H}(\bar{\rho}_t^{(h)}| \rho_t).
\end{equation*}
In what follows, we aim to bound $J_1$ - $J_3$ by some power of the time step $h$.

		\textbf{Step 2}: Estimate $J_1$. 
		
		We do Taylor expansion for $ b^{\xi, h}$. By definition,
\begin{equation}\label{eq:taylor123}
		\begin{aligned}
			J_1=&2\beta\mathbb{E}_{\xi_k}\mathbb{E}[|\hat{b}_t^{\xi_k,h}- b^{\xi_k, h}|^2(\bar{X}_t) |\xi_k] = 2\beta\mathbb{E}_{\xi_k}\mathbb{E}\left[\left|\mathbb{E}[b^{\xi_k,h}(\bar{X}(T_{k})) - b^{\xi_k, h}(\bar{X}(t))|\bar{X}(t)]\right|^2 \mid \xi_k\right]\\
			 =& 2\beta\mathbb{E}_{\xi_k}\mathbb{E}\Big[\Big|\mathbb{E}\Big[\nabla  b^{\xi, h}(\bar{X}(t))\cdot (\bar{X}(T_{k})-\bar{X}(t)) + r_t
			 \mid \bar{X}(t)\Big]\Big|^2 \mid \xi_k\Big], \\
		\end{aligned}
\end{equation}
where
\begin{equation*}
    r_t := \frac{1}{2}\Big(\int_0^1  \nabla^2 b^{\xi, h}\left(\theta \bar{X}(t)+(1-\theta)\bar{X}(t_{k-1})\right)d\theta\Big) : \left(\bar{X}(t)-\bar{X}(T_{k})\right)^{\otimes2}.
\end{equation*}

    We estimate the first-order term and the remainder term separately.
For the first-order term, we recall that $|\nabla b^{\xi,h}(x)|\leq C(1+| b^{\xi, h}(x)|)\leq Ch^{-\alpha}$. So we have
\begin{multline}\label{eq:keyimprove}
     \mathbb{E}\Big[\Big|\mathbb{E}\Big[\nabla  b^{\xi, h}(\bar{X}(t))\cdot (\bar{X}(T_{k})-\bar{X}(t))\mid \bar{X}(t)\Big]\Big|^2 \mid \xi_k\Big]\\
     \leq Ch^{-2\alpha} \mathbb{E}\left[| \mathbb{E}\left[\bar{X}(t_{k-1})-\bar{X}(t)|\bar{X}(t)\right]|^2 |\xi_k\right].
\end{multline}
By Bayes's law,
\begin{equation*}
    \begin{aligned}
			\mathbb{E}[\bar{X}(t_{k-1})-\bar{X}(t)|\bar{X}(t)=x,\xi_k]=&\int(y-x)P(\bar{X}(T_{k-1})=y|\bar{X}(t)=x,\xi_k)dy \\
			= & \int (y-x)\frac{\bar{\rho}_{T_{k-1}}^{(\xi_k, h)}(y)P(\bar{X}(t)=x|\bar{X}(T_{k-1})=y,\xi_k)}{\bar{\rho}_t^{(\xi_k, h)}(x)}dy.
		\end{aligned}
\end{equation*}
Note that $P(\bar{X}(t)=x|\bar{X}(T_{k-1})=y,\xi_k)$ is a Gaussion, so we split $y-x$ and use integration by parts. 
We claim in Lemma \ref{integrationbyparts} that (see similar details in \cite[Lemma A.3]{li2022sharp})
\begin{equation*}
    \mathbb{E}_{\xi_k}\mathbb{E}\Big[\Big|\mathbb{E}\Big[\nabla  b^{\xi, h}(\bar{X}(t))\cdot (\bar{X}(T_{k})-\bar{X}(t))\mid \bar{X}(t)\Big]\Big|^2 \mid \xi_k\Big]\leq Ch^{2(1-\alpha)}(1+\mathcal{I}(\bar{\rho}_{T_{k-1}}^{(h)})).
\end{equation*}
Compared with \cite[Lemma A.3]{li2022sharp}, the additional $h^{-2\alpha}$ comes from the $O(h^{-\alpha})$ upper bound for $|\nabla b^{\xi_k,h}|$ instead an $O(1)$ one.

For the remainder term in Taylor expansion \eqref{eq:taylor123}, we use H\"{o}lder's inequality, the polynomial upper bound for $\nabla^2 b^{\xi_k,h}$ derived in Lemma \ref{DPL2}, and the moment bound derived in Lemma \ref{UITMB} to obtain that
\begin{equation*}
    \begin{aligned}
			&\quad\mathbb{E}_{\xi_k}\mathbb{E}\Big[\Big|\mathbb{E}\Big[r_t\mid \bar{X}(t)\Big]\Big|^2 \mid \xi_k\Big]
			\leq Ch^{2}.\\
		\end{aligned}
\end{equation*}
Hence,
\begin{equation*}
    J_1 \leq Ch^{2(1-\alpha)}(1+\mathcal{I}(\bar{\rho}_{T_{k-1}}^{(h)})),
\end{equation*}
where $C$ is a positive constant independent of $h$, $k$.

		\textbf{Step 3}: Estimate $J_2$.

$J_2$ is in fact the taming error. Using exactly the same idea of Lemma \ref{SE4}, we are able to derive that
\begin{equation*}
    J_2 \leq Ch^{2}.
\end{equation*}
Note that under Assumption \ref{PGE} (instead of Assumption \ref{SEA} used in Lemmas \ref{SE1}, \ref{SE2} above), the moment bound is uniform-in-time, so the positive constant $C$ here is also independent of $k$. We omit the details here.
	    
 \textbf{Step 4}: Estimate $J_3$. 
 
By Assumption \ref{SGLDA}(b) and the definition of $J_3$ above, we only need to obtain a uniform-in-batch estimate for $\int |\frac{\bar{\rho}_t^{({\xi_k,h})}}{\bar{\rho}_t^{(\tilde{\xi}_k,h)}}-1|^2\bar{\rho}_t^{(\tilde{\xi}_k,h)} dx$.
	    Denote 
        \begin{equation*}
            \tilde{b}(\cdot):=\sqrt{\frac{\beta}{2}}(b^{\tilde{\xi}_k,h}-b^{\xi_k,h})(\cdot).
        \end{equation*}
Since $\bar{\rho}_t^{({\xi_k,h})}$ and $\bar{\rho}_t^{(\tilde{\xi}_k,h)}$ are time marginal laws of two SDEs (recall the definition of $\bar{X}'(t)$ before \eqref{eq:j1j2j3j4} above) that only differs in their drift terms, then,
\begin{equation*}
   \frac{\bar{\rho}_t^{({\xi_k,h})}}{\bar{\rho}_t^{(\tilde{\xi}_k,h)}} (x)
=\mathbb{E} \left[
\exp\left(\int_{T_k}^t \tilde{b} (\bar{X}'(T_k)) dW_s - \frac{1}{2} \int_{T_k}^t |\tilde{b} (\bar{X}'(T_k))|^2 ds\right) \Bigg| \bar{X}'(t) = x, \xi_k, \tilde{\xi}_k  \right].
\end{equation*}
The derivation is based on basic properties of path measures and Girsanov's theorem, we refer to Appendix A.3 of \cite{li2022sharp} for the details. As a result, one has
\begin{multline*}
		\int\Big|\frac{\bar{\rho}_t^{\xi_k}}{\bar{\rho}_t^{\tilde{\xi}_k}}-1\Big|^2\bar{\rho}_t^{\tilde{\xi}_k}dx
		= \int \bar{\rho}_t^{\tilde{\xi}_k}(x) \Big|\int\Big[\exp\big(\sqrt{\frac{\beta}{2}} \tilde{b}(y)(x-y-b^{\tilde{\xi}_k,h}(y)(t-T_{k}))\\
        -\frac{1}{2}|\tilde{b}(y)|^2(t-T_{k})\big)-1 \Big]P(\bar{X}'(T_{k})=y|\bar{X}'(t)=x)dy\Big|^2dx.
\end{multline*}
The treatment here is similar to \textbf{Step 2} as we still split $y-x$ to estimate.  Denote
\begin{equation*}
    z:=\sqrt{\frac{\beta}{2}} \tilde{b}(y)(x-y-b^{\tilde{\xi}_k,h}(y)(t-T_{k})) -\frac{1}{2}|\tilde{b}(y)|^2(t-T_{k}).
\end{equation*}
We split the above into the following three parts:
\begin{equation*}
    \begin{aligned}
		&\exp\left(\sqrt{\frac{1}{2}} \tilde{b}(y)(x-y-b^{\tilde{\xi}_k,h}(y)(t-T_{k})) -\frac{1}{2}|\tilde{b}(y)|^2(t-T_{k})\right)-1\\
		=&\Big(\sqrt{\frac{1}{2}} \tilde{b}(y)(x-y)\Big) + \Big((t-T_{k})(-\sqrt{\frac{1}{2}} b^{\tilde{\xi}_k,h}(y) -\frac{1}{2}|\tilde{b}(y)|^2)\Big) + \Big(e^z-z-1\Big)\\
		=:&K_1+K_2+K_3\\
	\end{aligned}
\end{equation*}
Then it suffices to estimate $\int \bar{\rho}_t^{\tilde{\xi}}(x) |\int K_i P(\bar{Y}_{T_{k-1}}=y|\bar{Y}_t=x)dy|^2dx$, $i=1,2,3$. The estimates for $K_2$ and $K_3$ terms are straightforward, and for the $K_1$ term, we further split it and use integration by parts to estimate its leading term. The details are nearly identical to Lemmas A.4 - A.6 in \cite{li2022sharp}. We give the complete derivation in Appendix \ref{sec:RBerror} for completeness.

Combining the above, we have
\begin{equation*}
    J_3\leq Ch^{2(1-\alpha)}(1+\mathcal{I}(\bar{\rho}^{(h)}_{T_{k}})),
\end{equation*}
where the constant $C$ is independent of $k$, $h$.

    	\textbf{Step 5}: Overall estimate using Gr\"onwall's inequality and the estimate for Fisher information.

            Above, we have derived that for $t \in [T_k,T_{k+1})$,
\begin{equation*}
    \frac{d}{dt}\mathcal{H}(\bar{\rho}_t^{(h)}|\rho_t)\leq C_1h^{2(1-\alpha)}(1+\mathcal{I}(\bar{\rho}^{(h)}_{T_k}))-C_2\mathcal{H}(\bar{\rho}_t^{(h)}|\rho_t),
\end{equation*}
where $C_1$, $C_2$ are independent of $k$ and $h$.
Since the Fisher information $\mathcal{I}(\rho)$ is a convex functional with respect to $\rho$ (see \cite[Lemma 4.2]{fournier2014propagation}), for $t \in [T_k, T_{k+1})$ we have (recall the definition of $\rho^{(\b{\xi},h)}$ near \eqref{eq:rhoxikh_def} above)
\begin{equation*}
\frac{d}{dt}\mathcal{H}(\bar{\rho}_t^{(h)}|\rho_t)\leq C_1h^{2(1-\alpha)}(1+\mathbb{E}_\xi\mathcal{I}(\rho^{(\b{\xi},h)}_{T_k}))-C_2\mathcal{H}(\bar{\rho}_t^{(h)}|\rho_t).
\end{equation*}
By Gr\"onwall's inequality, it holds that
\begin{equation*}
\mathcal{H}(\bar{\rho}_t^{(h)}|\rho_t) \leq C_1h^{2(1-\alpha)}\mathbb{E}_\xi \int_0^t e^{-C_2(t-s)}(1+\mathcal{I}(\rho^{(\b{\xi},h)}_{\kappa(s)}))ds.
\end{equation*}
By Lemma \ref{lmm:fisher2}, we have
\begin{equation*}
    \mathcal{H}(\bar{\rho}_t^{(h)}|\rho_t) \leq C_1' h^{2(1-\alpha)} C_2^{-1}(1-e^{-C_2t}) 
    + C_1'' h^{2(1-\alpha)} \mathbb{E}_{\xi} \int_0^{T_k} e^{-C_2(T_k-s')}\mathcal{I}(\rho_{s'}^{(\b{\xi},h)})  ds'.
\end{equation*}
Note that $C_1'$ is different from $C_1$ due to the constant $\mathcal{I}(\rho^{\b{\xi}}_0)$, and $C_1''$ is changed from $C_1$ by multiplying a constant coming from Lemma \ref{lmm:fisher2} and $e^{C_2 h}$. The small constant $e^{C_1 h}$ appears due to fact that in Lemma \ref{lmm:fisher2}, $\mathcal{I}(\bar{\rho}_{\kappa(s)}^{(\b{\xi},h)})$ is controlled by $\mathcal{I}(\bar{\rho}_{s}^{(\b{\xi},h)})$ for $s \in [\kappa(s)-h, \kappa(s))$ (instead of $[\kappa(s), \kappa(s)+h)$. Finally, by Lemma \ref{lmm:fisher1}, for any $t$ and $\b{\xi}$, there exists a uniform upper bound $M$ such that
\begin{equation*}
    \int_0^{t} e^{-C_2(t-s)}\mathcal{I}(\rho_{s}^{(\b{\xi},h)})  ds \leq M.
\end{equation*}
As a consequence,
\begin{equation*}
    \sup_{t\geq 0}\mathcal{H}(\bar{\rho}_t^{(h)}|\rho_t) \leq Ch^{2-2\alpha}.
\end{equation*}

\end{proof}

\section{Numerical experiment}\label{sec:numerical}

In this section, the MTE scheme is used for numerical experiments to verify the theoretical results. For the strong and weak convergence rate, the 1-dimensional Ginzburg-Landau equation and the 2-dimensional Langevin equation are taken as the examples.

We shall apply the following specific form of the cut-off function $\psi$ in \eqref{eq:cutoff}: 
\begin{gather*}
\psi(r)=\begin{cases}
\begin{aligned}
&0, &r\leq 1, \\
&r, &r\geq 2, \\
&\frac{\exp\left(-(r-1)^{-1}\right)}{\exp\left(-(r-1)^{-1}\right)+\exp\left(-(2-r)^{-1}\right)}\cdot r, &r\in(1,2).
\end{aligned}
\end{cases}
\end{gather*}

\subsection{A 1D SDE with multiplicative noise}

The Ginzburg-Landau equation is used to study phase transitions in superconductivity theory. We consider a 1-dimensional Ginzburg-Landau equation with multiplicative noise: 
            \begin{equation*}
                dX(t)= -(X^3(t)+1.875X(t))dt+0.5X(t)\,dW_t,\quad X(0)=1.
            \end{equation*}
            
 To illustrate the method with random batch approximation, the drift term is split into the super-linear part and the linear part as the idea of mini-batch.

The step-size of the reference solution is set to be $2^{-15}$ while that of the numerical solution is set to be from $2^{-5}$ to $2^{-9}$. The rounds of simulation is $10^5$. The test function is chosen $\cos(x)$ and $\cos(\exp(x))$. The parameters in \eqref{eq:modifiedtamedb} are $\alpha = 0.5$ and $\gamma = 1$.

The reference solution is calculated by modified tamed scheme with reference step-size. The numerical soluitons are given by modified tamed scheme with mini-batch (MTE-RBM), modified tamed scheme without mini-batch (MTE) and tamed scheme with index $\alpha = 1/2$ (TE)[recall \eqref{eq:originaltamed}]. The results are shown in figure \ref{fig:SE1} and \ref{fig:WE1}. Figure \ref{fig:SE1} illustrates the strong error between the reference solution and the numerical solution, verifying the 1/2-order convergence of the MTE scheme. The weak error is given by Figure \ref{fig:WE1}, where the MTE scheme shows a better weak convergence rate of 1-order compared to the 1/2 order weak convergence rate of the original tamed scheme.

\begin{figure}[hbtp]
\centering
\includegraphics[width=0.5\linewidth]{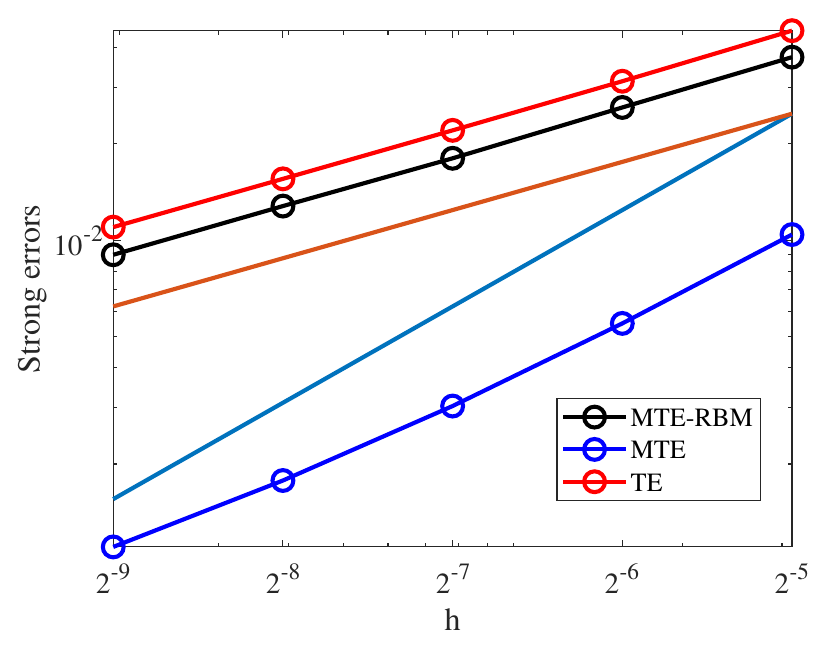}
\caption{Strong convergence for the 1D example with multiplicative noise. All methods are $1/2$ order.}
 \label{fig:SE1}
\end{figure} 

\begin{figure}[hbtp]
\centering
\includegraphics[width=0.48\linewidth]{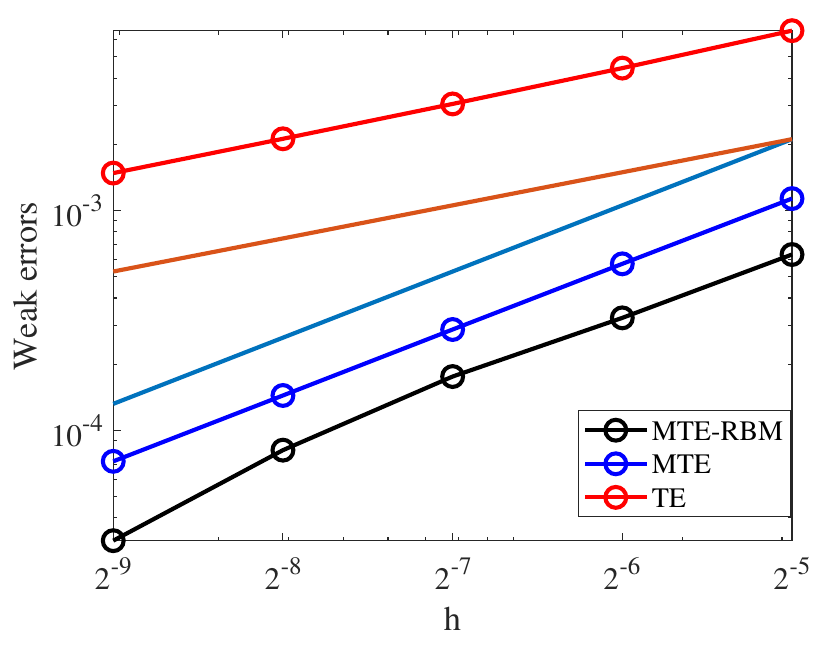}
\includegraphics[width=0.48\linewidth]{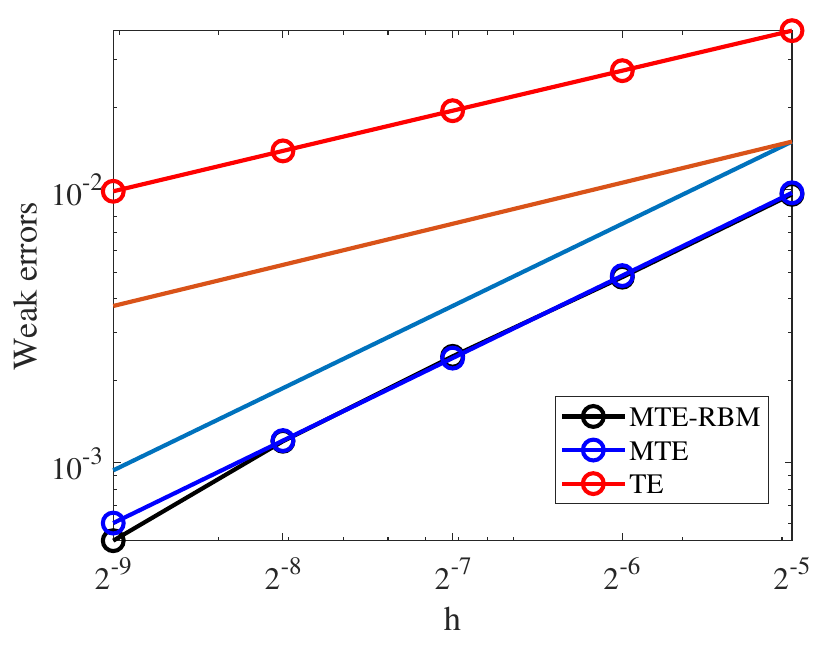}
\caption{Weak convergence for the 1D example. Both the TME with and without random batch are first order, an improvment compared to the original tamed scheme (Test function, Left: $\cos(x)$;  Right: $\cos(\exp(x))$)}
 \label{fig:WE1}
\end{figure} 


\subsection{A 2D Langevin equation with additive noise}
Consider the Langevin diffusion \eqref{eq:overdampedlangevin} mentioned in section \ref{sec:SGLD}, we consider a 2-dimensional case,  where $U(X) = \frac{1}{2}|X|^2-\frac{1}{4}|X|^4$, $\beta=\frac{1}{2}$, $X=[x_1,x_2]^T$:
    \begin{equation*}
        dX(t)=(X(t)-|X(t)|^2\cdot X(t))dt + dW(t), \quad X(0) =[x_1(0),x_2(0)]^T = [1/4, 1/3]^T.
    \end{equation*}
   It is easy to verify that Assumptions \ref{SEA}, \ref{ass:initial} and \ref{WEA} hold. The drift term is split into the super-linear part and the linear part, as the idea of mini-batch.

The step-size of the reference solution is set to be $2^{-17}$ while that of the numerical solution is set to be from $2^{-7}$ to $2^{-12}$. The rounds of simulation is $10^6$. The test function is chosen $\exp(x_1^2+x_2^2)$ and $\cos(\exp(x_1+x_2))$. The parameters in \eqref{eq:modifiedtamedb} are chosen as $\alpha = 0.5$, $\gamma=0.1$.

Schemes of the reference solution and numerical solutions are the same to that of the 1-dimensional case. Simulation results are given in Figure \ref{fig:SE2} and Figure \ref{fig:WE2}. Figure \ref{fig:SE2} tells the 1-order convergence of MTE due to the additive noise, while the MTE-RBM and TE have 1/2-order strong convergence rate due to the random batch error and taming error. Figure \ref{fig:WE2}, similar to Figure \ref{fig:WE1}, verifies that our modified tamed scheme could recover the weak convergence rate to the original scheme.

           \begin{figure}[hbtp]
			\centering
				\centering
				\includegraphics[width=0.5\linewidth]{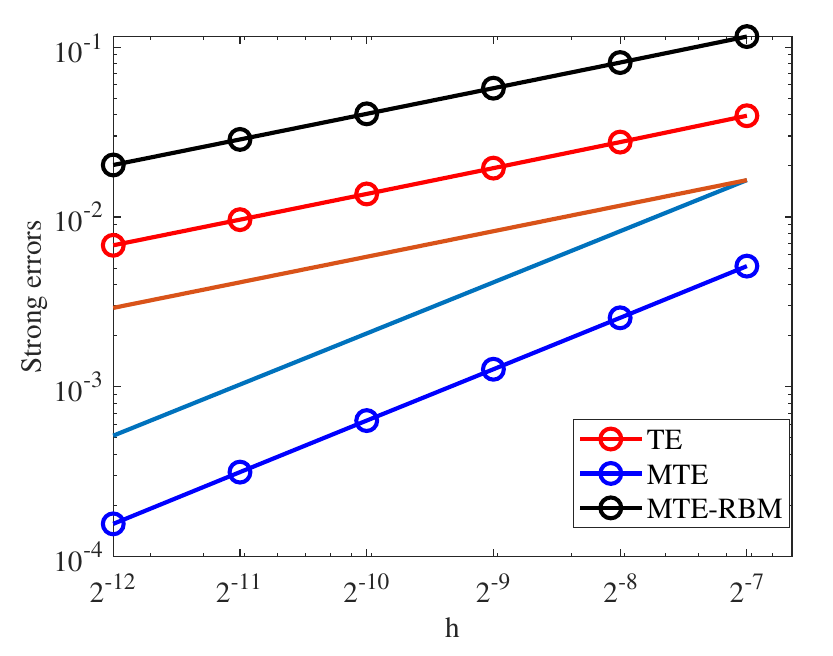}
				\caption{Strong convergence for the 2D Langevin equation with additive noise. The MTE without random batch is of order $1$. MTE with random batch and the original tamed scheme with $\alpha=1/2$ are of order $1/2$.}
                \label{fig:SE2}
		\end{figure} 

            \begin{figure}[hbtp]
			\centering
				\includegraphics[width=0.48\linewidth]{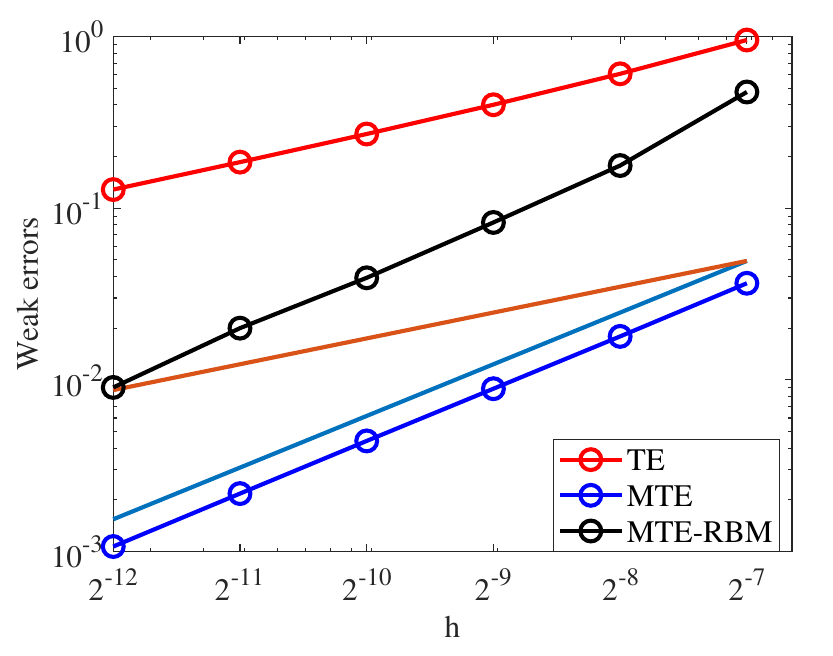}
				\includegraphics[width=0.48\linewidth]{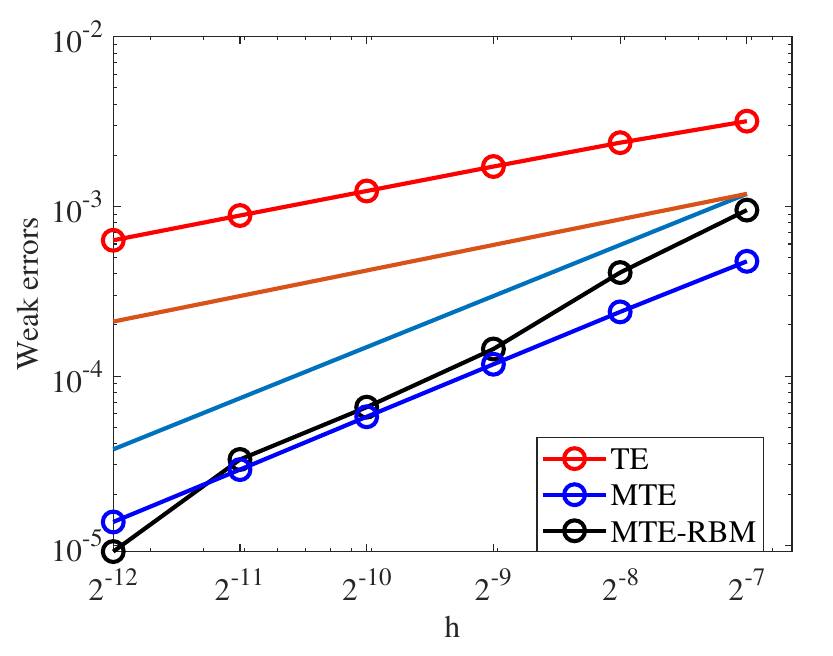}
            \caption{Weak convergence for the 2D Langevin equation. (Test function, Left: $\exp(x_1^2+x_2^2)$;  Right: $\cos(\exp(x_1+x_2))$)}
             \label{fig:WE2}
		\end{figure}

\section{Conclusion and discussion}\label{sec:discussion}

We have proposed a modified tamed scheme for SDEs with nongloabally Lipschitz coefficients by introducing an additional cut-off function in the traditional tamed scheme. This strategy is able to ensure the numerical stability and preserve the original order of convergence for explicit discretization. We have also provided rigorous convergence analysis for the modified tamed Euler (MTE) scheme, and show that the proposed MTE scheme has the desired order of convergence. Moreover, we have also applied the modified taming strategy to Langevin sampling for high-dimensional sampling problems, and obtained a uniform-in-time near-sharp (near second-order) error estimate under KL-divergence.
We would like to emphasize that our modified tamed scheme is not limited to the Euler's case, and can be applied to other higher-order schemes for approximating SDEs with non-globally Lipschitz drifts/diffusions without difficulty. It is also possible to extend to other settings such as stochastic delay differential equations (SDDE), SDEs with time change, and with L\'evy noises. We leave these as future work.

We finally remark here that there still exists room to improve the result in our analysis in Section \ref{sec:SGLD}. Recall that the KL error we obtained in Theorem \ref{thm:tsgld} is only near optimal, which is of order $O(h^{2-\epsilon})$, compared to the sharp results for the Lipschtiz case (see \cite{li2022sharp, mou2022improved}). From technical aspect, in order to obtain a sharp error bound under the current settings, we need to avoid using the $O(h^{-\alpha})$ upper bound for $\nabla b^{\xi,h}$ in places such as \eqref{eq:keyimprove}. Instead, one needs to obtain a uniform-in-time $O(1)$ upper bound for $\int|\nabla \log \bar{\rho}^{(h)}_{t}|^{2+\delta}\bar{\rho}^{(h)}_{t} dx$ for some $\delta > 0$. To our knowledge, this is highly non-trivial. Moreover, a pointwise estimate (which is stronger than the required integral form) for higher-order derivatives of log-(numerical-)densities is in fact very useful in numerical analysis for SDEs, but the derivation is often more tricky and still remains open for the cases of non-Lipschitz drift/diffusion coefficients, degenerate noise, or jumping processes. We leave these as a series of challenging future work.

\section*{Acknowledgement}

The work of L. Li was partially supported by the National Key R\&D Program of China, Project Number 2021YFA1002800,  NSFC 12371400 and 12031013, and  Shanghai Municipal Science and Technology Major Project 2021SHZDZX0102.

\appendix
\section{Proof of Lemma \ref{PL}}\label{sec:bernstein}
Recall that the function $v(t,x)$ satisfies the backward Kolmogorov equation:
\begin{equation*}
    \partial_t v(t,x)+b(t,x)\cdot\nabla v(t,x)   +\frac{1}{2}\Lambda^{ij}(t,x)\partial_{ij}v(t,x)=0, \quad v(T, x)=f(x).
\end{equation*}
Denote $u_T(t,x)=v(T-t,x)$. Then $u_T$ satisfies:
\begin{equation*}
    \partial_t u_T(t,x)=\mathcal{L} u_T(t,x)=b(T-t,x)\cdot\nabla u_T(t,x)  +\frac{1}{2}\Lambda^{ij}(T-t,x)\partial_{ij}u_T(t,x),
\end{equation*}
with initial condition $u_T(0, x)=f(x)$.
Hence it suffices to show that $u$ and its derivatives have polynomial growth.
In what follows, we will simply use $u(t,x)$, $b$, and $\Lambda$ to indicate $u_T(t, x)$, $b(T-x, x)$ and $\Lambda(T-t, x)$ respectively for convenience.		
		
\begin{lemma}
With the Assumptions \ref{SEA}, \ref{ass:initial} and \ref{WEA}, $u$ has polynomial growth, namely, there exists $C_0>0$, $\ell_0 \geq 1$ independent of $t$ and $x$ such that $|u(t,x)| \leq C_0 (1 + |x|^{\ell_0})$.
\end{lemma}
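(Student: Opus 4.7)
The plan is to represent $u(t,x)$ as an expectation of $f$ evaluated along the SDE trajectory started at $(T-t,x)$, and then transfer the polynomial growth from $f$ via a moment bound whose constants depend polynomially on the starting point $x$. Precisely, since $v(s,x) = \mathbb{E}[f(X_{s,x}(T))]$ where $X_{s,x}$ denotes the solution of \eqref{SDE} started at time $s$ from position $x$, setting $s = T - t$ gives
\begin{equation*}
u(t,x) = v(T-t,x) = \mathbb{E}[f(X_{T-t,x}(T))].
\end{equation*}
Because $f \in C_p^\infty(\mathbb{R}^d)$, there exist $C_f > 0$ and $k \geq 1$ with $|f(y)| \leq C_f(1 + |y|^k)$ for all $y$, so the task reduces to bounding $\mathbb{E}|X_{T-t,x}(T)|^k$ by a polynomial in $|x|$ with a constant that is uniform in $t \in [0,T]$.

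The key step is a Lyapunov estimate for the SDE with explicit $x$-dependence. Applying It\^o's formula to $|X_{s,x}(r)|^{2p}$ for any $p \geq 1$ and taking expectation, one gets, after using Assumption \ref{SEA}(a) which provides $y \cdot b(r,y) \vee |\sigma(r,y)|^2 \leq K(1 + |y|^2)$, an inequality of the form
\begin{equation*}
\frac{d}{dr}\mathbb{E}|X_{s,x}(r)|^{2p} \leq C_p\bigl(1 + \mathbb{E}|X_{s,x}(r)|^{2p}\bigr), \qquad r \in [s,T],
\end{equation*}
with $C_p$ depending only on $K$, $p$, $d$. Gr\"onwall's inequality then yields $\mathbb{E}|X_{s,x}(r)|^{2p} \leq e^{C_p(r-s)}(|x|^{2p} + 1) \leq C(T,p)(1 + |x|^{2p})$ uniformly in $s \in [0,T]$ and $r \in [s,T]$. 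This is essentially the same argument as Lemma \ref{SE1}, only now tracking the initial-data dependence explicitly.

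Combining the two pieces gives
\begin{equation*}
|u(t,x)| \leq C_f\bigl(1 + \mathbb{E}|X_{T-t,x}(T)|^k\bigr) \leq C_f\bigl(1 + C(T,k)(1 + |x|^k)\bigr) \leq C_0(1 + |x|^{\ell_0})
\end{equation*}
with $\ell_0 := k$ and $C_0$ independent of $t$ and $x$. No obstacle is really expected here; the only minor technical point is ensuring the moment constant in the Gr\"onwall step depends on $x$ only through $|x|^{2p}$ (not through, say, a higher power), which is automatic from the sublinear-in-$|y|^2$ growth of the dissipation condition in Assumption \ref{SEA}(a). Note also that we do not need Assumption \ref{WEA} for this zeroth-order bound; the extra regularity assumptions there will be needed only for the derivative estimates handled subsequently via the Bernstein-type argument.
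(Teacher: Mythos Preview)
Your proposal is correct and follows essentially the same approach as the paper: represent $u(t,x)$ via the conditional expectation $\mathbb{E}[f(X(T))\mid X(T-t)=x]$, use the polynomial growth of $f$, and control the resulting moment $\mathbb{E}|X_{T-t,x}(T)|^{k}$ by an It\^o/Gr\"onwall argument based on Assumption~\ref{SEA}(a) that yields a constant polynomial in $|x|$. Your observation that Assumption~\ref{WEA} is not needed for this zeroth-order bound is also consistent with the paper, which only invokes Assumption~\ref{SEA} here.
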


\begin{proof}
We directly make use of the definition of $u(t,x)$ and the assumed polynomial bound for the test function $f(\cdot)$ to obtain that
\begin{equation*}
\begin{aligned}
|u(t,x)| =& \big|\mathbb{E}[f(X(T))|X(T-t)=x]\big|
			\leq  \mathbb{E}[C_f(1+|X(T)|^{\ell_f})|X(T-t)=x]\\
= & \mathbb{E}\left[C_f\left(1+\left|x+\int_{T-t}^T(b(s,X(s))ds+\sigma(s,X(s))dW_s)\right|^{\ell_f}\right)\Big|X(T-t)=x\right].
\end{aligned}
\end{equation*}
Then, by BDG inequality and conditions (a)(b) of Assumption \ref{SEA} (applied to $b$), there exist $C>0$ and $\ell_0 \geq 1$ such that
\begin{equation*}
    |u(t,x)| \leq C+C|x|^{\ell_0}+C\sup_{T-t\leq s\leq T}\mathbb{E}[|X(s)|^{\ell_0} \mid X(T-t)=x].
\end{equation*}
Finally, by the standard moment estimates of SDEs under the given assumptions, 
\[
\sup_{T-t\leq s\leq T}\mathbb{E}[|X(s)|^{\ell_0}\mid  X(T-t)=x] \le C(1+|x|^{\ell_0}).
\]
Consequently,  the conclusion follows.
\end{proof}

For the case of higher order derivatives, our estimations are based on a Bernstein-type method. In what follows, we will state our proof for $\nabla u$ in detail, while omit some similar parts in the proofs for $\nabla^2 u$ and $\nabla^3 u$.


		\begin{lemma}
			With the Assumptions \ref{SEA}, \ref{ass:initial} and \ref{WEA}, $\nabla u$ has polynomial growth, namely, there exist $C_1>0$, $\ell_1 \geq 1$ independent of $t$ and $x$ such that $|\nabla u(t,x)| \leq C_1 (1 + |x|^{\ell_1})$.
		\end{lemma}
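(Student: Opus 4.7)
The plan is to use a Bernstein-type argument: I would derive a parabolic differential inequality satisfied by $w:=|\nabla u|^2$ and then compare $w$ with a polynomially-growing super-solution via the maximum principle.

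First, differentiating the backward Kolmogorov equation $\partial_t u=b\cdot\nabla u+\tfrac{1}{2}\Lambda^{ij}\partial_{ij}u$ in $x_k$, multiplying by $\partial_k u$, and summing in $k$, a direct computation gives
\begin{equation*}
\partial_t w-\mathcal{L}w = 2(\nabla u)^T(\nabla b)(\nabla u) + \sum_k\partial_k u\,\partial_k\Lambda^{ij}\partial_{ij}u - \Lambda^{ij}\sum_k\partial_{ik}u\,\partial_{jk}u.
\end{equation*}
The last term on the right is a genuinely negative contribution bounded above by $-\lambda|\nabla^2 u|^2$ thanks to the uniform ellipticity in Assumption \ref{WEA}(b). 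The crucial observation is that the first term is controlled using only the one-sided Lipschitz condition from Assumption \ref{SEA}(b): polarizing $(x-y)^T(b(x)-b(y))\le L|x-y|^2$ yields $\xi^T\nabla b(t,x)\xi\le L|\xi|^2$ pointwise, so $2(\nabla u)^T(\nabla b)(\nabla u)\le 2L\,w$. The cross term involving $\nabla\Lambda$ is estimated by Cauchy-Schwarz and Young's inequality,
\begin{equation*}
\Big|\sum_k\partial_k u\,\partial_k\Lambda^{ij}\partial_{ij}u\Big|\le \frac{\lambda}{2}|\nabla^2 u|^2+\frac{C}{\lambda}|\nabla\Lambda|^2|\nabla u|^2\le \frac{\lambda}{2}|\nabla^2 u|^2 + Cw,
\end{equation*}
where I have used the bound $|\nabla\Lambda|\le L$ from Assumption \ref{WEA}(c). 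The $|\nabla^2 u|^2$ contribution is absorbed by the dissipation term, leaving
\begin{equation*}
\partial_t w - \mathcal{L}w \le C_\star w
\end{equation*}
for a constant $C_\star$ independent of $x$ and $t$.

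Next, I would construct an explicit polynomial super-solution. Set $\phi(t,x):=A e^{\Gamma t}(1+|x|^2)^{\ell_f}$ with $\ell_f$ coming from the polynomial order of $f$ (so that $\phi(0,x)\ge|\nabla f(x)|^2$ after a direct differentiation and choice of $A$). A short computation using Assumption \ref{SEA}(a) ($x\cdot b\le K(1+|x|^2)$, $|\Lambda|\le K(1+|x|^2)$) shows $\mathcal{L}\phi\le C_1\phi$ with $C_1$ independent of $t,x$, so choosing $\Gamma=C_1+C_\star$ gives $\partial_t\phi-\mathcal{L}\phi\ge C_\star\phi$. Thus $\phi-w$ satisfies $(\partial_t-\mathcal{L})(\phi-w)\ge C_\star(\phi-w)$ with nonnegative initial data.

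The conclusion then follows from a comparison/maximum principle for the parabolic operator $\partial_t-\mathcal{L}-C_\star$ in the class of polynomially-growing functions. Such a comparison principle for operators with polynomially-growing coefficients (as here, via Assumption \ref{WEA}(a)) is standard once one has the a priori bounds; the usual route is a stochastic representation: $w\le\phi$ reduces to the statement that $\mathbb{E}[(w-\phi)(T-\tau_R,X_{\tau_R})^+]\to 0$ as $R\to\infty$ along stopping times exiting balls, which holds because $X$ has all polynomial moments (Lemma~\ref{SE1}) and $w-\phi$ grows only polynomially. This yields $|\nabla u(t,x)|^2\le\phi(t,x)\le C_1(1+|x|^{2\ell_1})$ uniformly on $[0,T]$, which is the claim with $\ell_1=\ell_f$.

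The main obstacle I anticipate is not the PDE manipulation itself—which is essentially algebraic—but rather a clean justification of the comparison step when coefficients are only one-sided Lipschitz. To avoid needing general comparison theorems under super-linear growth, the cleanest route in practice is to localize: apply the maximum principle on bounded cylinders $[0,T]\times B_R$, controlling the boundary contribution using the stochastic representation $u(T-t,x)=\mathbb{E}[f(X_T)\mid X_{T-t}=x]$ and the moment estimates in Lemma~\ref{SE1}, then send $R\to\infty$. The same strategy will then be repeated for $|\nabla^2 u|^2$ and $|\nabla^3 u|^2$, each time picking up additional lower-order terms involving derivatives of $b$ and $\Lambda$, all of which have the polynomial bounds provided by Assumption \ref{WEA}(a) and (c).
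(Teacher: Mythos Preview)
Your Bernstein-type strategy and the differential inequality $\partial_t w-\mathcal{L}w\le C_\star w$ for $w=|\nabla u|^2$ are correctly derived, and the use of the one-sided Lipschitz condition to get $\xi^T\nabla b\,\xi\le L|\xi|^2$ (hence a \emph{constant} $C_\star$) is a neat observation. The genuine gap is in the comparison step. To pass from the localized estimate on $[0,T]\times B_R$ to the global bound you need $\mathbb{E}[(w-\phi)^+(t-\tau_R,X_{\tau_R});\,\tau_R<t]\to 0$, and you justify this by asserting that ``$w-\phi$ grows only polynomially''. But polynomial growth of $w=|\nabla u|^2$ is exactly the conclusion you are trying to prove, so the argument is circular. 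The stochastic representation you invoke controls $u$, not $\nabla u$; without an independent a priori growth bound on $\nabla u$ (e.g.\ via the tangent process $J_t=\nabla_x X^x_t$, whose moments are controlled by the one-sided Lipschitz condition---a route you do not take), the boundary term cannot be discarded.

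The paper avoids this difficulty by a structurally different choice of auxiliary function: it works with the weighted quantity $w=|\nabla u|^2/\sqrt{c-u}$, where $c$ is a local constant chosen so that $c-u\ge 1$ on $B_1(x_0)$. The extra $(c-u)^{-1/2}$ factor generates, after the Bernstein computation, a genuinely \emph{quadratic} term $-m w^2$ on the right-hand side (coming from $-\Lambda^{ij}u_iu_j|\nabla u|^2/(c-u)^{5/2}$), yielding $\mathcal{M}w\le Mw-mw^2$ with $M$ of polynomial growth. Multiplying by a smooth cutoff $\psi$ supported in $B_1(x_0)$, the maximum of $\psi w$ on the compact set $[0,T]\times\overline{B_1(x_0)}$ is attained, and at an interior maximum the inequality forces $\psi w\lesssim M/m$, which is polynomial in $x_0$. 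No global comparison, no boundary terms at infinity, and no a priori growth assumption on $\nabla u$ are needed. Note also that the paper does not use your one-sided Lipschitz trick: its $M$ contains $|\nabla b|$, which is only polynomially bounded, but that is harmless because the argument is entirely local.
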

	\begin{proof}
		\textbf{Step 1} Construct $w$ as a function of $u$ such that $w$ has a bounded estimation.
		
		We define $w=\frac{|\nabla u|^2}{\sqrt{c-u}}$ and consider a local estimation, i.e. for some given $x_0$,  $x\in B_1( {x_0})$. Here $c$ can be chosen as $(C_02^{l_0}+1)(1+| {x_0}|^{l_0})$, then $c-u\geq 1+| {x_0}|^{l_0}$ for $x\in B_1( {x_0})$.
		
		Consider the operator $ \mathcal{M} = \partial_t - \mathcal{L} $ (recall that $\mathcal{L} = b \cdot \nabla + \frac{1}{2}\Lambda^{ij} \partial_{ij}$). Then by simple calculation we have
		
		$$\begin{aligned}
			\mathcal{M}(w) = & \frac{2u_kb^i_ku_i+u_k\Lambda^{ij}_ku_{ij}-\Lambda^{ij} u_{ki}u_{kj}}{(c-u)^{\frac{1}{2}}}-\frac{\frac{1}{2}u_i\Lambda^{ij}(|\nabla u|^2)_j}{(c-u)^{\frac{3}{2}}}-\frac{\frac{3}{8}\Lambda^{ij}u_iu_j|\nabla u|^2}{(c-u)^{\frac{5}{2}}} \\
			\triangleq & R -\frac{\Lambda^{ij} u_{ki}u_{kj}}{(c-u)^{\frac{1}{2}}}-\frac{\frac{1}{2}u_i\Lambda^{ij}(|\nabla u|^2)_j}{(c-u)^{\frac{3}{2}}}-\frac{\frac{3}{8}\Lambda^{ij}u_iu_j|\nabla u|^2}{(c-u)^{\frac{5}{2}}}. \\
		\end{aligned}$$
		For the remainder term $R$ we use the Young's inequality to obtain that
		 $$\begin{aligned}
			R = & \frac{2u_kb^i_ku_i+u_k\Lambda^{ij}_ku_{ij}}{(c-u)^{\frac{1}{2}}}\leq  2 |\nabla b| \frac{|\nabla u|^2}{\sqrt{c-u}}+\frac{| \Lambda_k |}{\sqrt{c-u}}(\delta u_{ij}^2+\frac{1}{4\delta}|\nabla u|^2)
		\end{aligned}.$$	
		On the other hand, 
		$$\begin{aligned}
		& \frac{\Lambda^{ij} u_{ki}u_{kj}}{(c-u)^{\frac{1}{2}}}+\frac{\frac{1}{2}u_i\Lambda^{ij}(|\nabla u|^2)_j}{(c-u)^{\frac{3}{2}}}+\frac{\frac{3}{8}\Lambda^{ij}u_iu_j|\nabla u|^2}{(c-u)^{\frac{5}{2}}}\\
		= & \Big(\frac{\frac{25}{36}\Lambda^{ij} u_{ki}u_{kj}}{(c-u)^{\frac{1}{2}}}+\frac{\frac{1}{2}u_i\Lambda^{ij}(|\nabla u|^2)_j}{(c-u)^{\frac{3}{2}}}+\frac{\frac{9}{25}\Lambda^{ij}u_iu_j|\nabla u|^2}{(c-u)^{\frac{5}{2}}}\Big)+\frac{\frac{11}{36}\Lambda u_{ki}u_{kj}}{(c-u)^{\frac{1}{2}}}+\frac{\frac{3}{200}\Lambda^{ij}u_iu_j|\nabla u|^2}{(c-u)^{\frac{5}{2}}}\\
		\geq & 0 + \frac{\frac{11}{36}\Lambda^{ij} u_{ki}u_{kj}}{(c-u)^{\frac{1}{2}}}+\frac{\frac{3}{200}\Lambda^{ij}u_iu_j|\nabla u|^2}{(c-u)^{\frac{5}{2}}}.
		\end{aligned}$$
		Choosing $\delta = \frac{11\lambda}{36L_1}$, we have
        \begin{equation*}
            \mathcal{M}(w)\leq -m\frac{w^2}{(c-u)^{\frac{3}{2}}}+Mw\leq -mw^2+Mw.
        \end{equation*}
		Here $m=\frac{3}{200}\lambda$, $M=2|\nabla {b}|+\frac{9L_1}{11\lambda}$.\
        
		\textbf{Step 2} For a given cut-off function $\psi$, compute $\mathcal{M}(\psi w)$.
		
		Now choose a cut-off function $\tilde{\psi}(r)$, such that $\tilde{\psi}(r)$ is strictly positive in $[0,1)$ and supported on $[0,1]$. Moreover, $\tilde{\psi}(r)=1, \forall r\in [0,1/2]$, and $|\tilde{\psi}'|+|\tilde{\psi}''|\leq C_\varepsilon\tilde{\psi}^{\varepsilon}$, $\forall \varepsilon \in (0,1)$.
		
		Set $\psi(x) = \tilde{\psi}(| {x}- {x_0}|)$.		Then we consider $\mathcal{M}(\psi w)=\psi \mathcal{M}(w)+w\mathcal{M}(\psi)-\Lambda^{ij}\psi_iw_j$.
		
		For $\Lambda^{ij}\psi_iw_j$, we have:
		$$\begin{aligned}
			\Lambda^{ij}\psi_iw_j = & \Lambda^{ij}\psi_i\psi^{-1}(\psi w)_j-\Lambda^{ij}\psi_i\psi^{-1}\psi_j w 
		 \geq	 \Lambda^{ij}\psi_i\psi^{-1}(\psi w)_j-|\Lambda| |\tilde{\psi}'|^2\psi^{-1}w.\\
		\end{aligned}$$
		
		For $\mathcal{M}(\psi)$, we have:
		$$\begin{aligned}
			\mathcal{M}(\psi) = & - {b} \cdot \nabla \psi - \frac{1}{2}\Lambda^{ij}\partial_{ij}\psi
			\leq  d| {b} | \cdot |\tilde{\psi}'|+\frac{d^2}{2} |\Lambda|\cdot|\tilde{\psi}''|.\\
 		\end{aligned}$$
		
		Recall that $\mathcal{M}(w)\leq -mw^2+Mw$, and we choose $\varepsilon = \frac{1}{2}$ (We may assume $C_\frac{1}{2}\geq 1$ without loss of generality), then we have:
		
		$$\begin{aligned}
			\mathcal{M}(\psi w) \leq & M\psi w-m \psi w^2 +(d| {b}|+\frac{d^2}{2}|\Lambda|)C_{\frac{1}{2}}\tilde{\psi}^{\frac{1}{2}} w - \Lambda^{ij}\psi_i\psi^{-1}(\psi  w)_j + |\Lambda| C_{\frac{1}{2}}^2 w^2.\\
		\end{aligned}$$
		
		Then we have:
		$$\mathcal{M}(\psi w) + \Lambda^{ij}\psi_j\psi^{-1}(\psi w)_j\leq (M+d| {b}|+\frac{d^2}{2}\lambda+\lambda)C_{\frac{1}{2}}^2 w-m\psi w^2\triangleq \tilde{M} w-m\psi w^2.$$\\
		\textbf{Step 3} Consider the maximum points.
		
		Notice that the left-hand side is a parabolic equation for $\psi w$, here $t\in[0,T]$,  ${x}\in B_1( {x_0}) $, we have:
		
		If $\psi w$ attains its maximum at $(0, {x})$, then:
		
		$$ w(t, {x_0})\leq \psi(x) w(0, {x})\leq \frac{|\nabla u(0, {x})|^2}{\sqrt{c-u(0,x)}}\leq |\nabla f(x)|^2.$$
		
		This leads to:
		\[
        |\nabla u|^2(t, {x_0})\leq |\nabla f(x)|^2\sqrt{c-u}\leq 8(C_T2^{\ell_f})^2(1+| {x_0}|^{3\ell_f}). 
        \]
		
		Hence we have: $$|\nabla u|\leq 4C_T2^{\ell_f}(1+| {x_0}|^{3\ell_f}).$$
		
		If $\psi w$ attains its maximum at $(t_0, {x})$ with $t_0>0$, then 
		$$\begin{aligned}
			0\leq & \mathcal{M}(\psi w) + \Lambda^{ij}\psi_j\psi^{-1}(\psi w) 
			\leq & \tilde{M} w-m\frac{\psi w^2}{(c-u)^{\frac{3}{2}}}.\\
		\end{aligned}$$ 
		So $$ w(t, {x_0})\leq \psi( {x}) w(t, {x})\leq \frac{\tilde{M}}{m}(c-u)^{\frac{3}{2}}=\frac{200}{3\lambda}(2|\nabla  {b}|+\frac{9L_1}{11\lambda}+d| {b}|+\frac{d^2}{2}\lambda+\lambda)(c-u)^{\frac{3}{2}}\leq \tilde{C}(1+| {x_0}|^{\tilde{\ell}}).$$
		Similarly, this leads to a polynomial growth estimation for $u_i$.
		
		Concluding the above, we know that $\nabla u$ has a polynomial bound. 
		\end{proof}
		
		\begin{lemma}
			With the Assumptions \ref{SEA}, \ref{ass:initial} and \ref{WEA}, $\nabla^2 u$ has polynomial growth, namely, there exist $C_2>0$, $\ell_2 \geq 1$ independent of $t$ and $x$ such that $|\nabla^2 u(t,x)| \leq C_2 (1 + |x|^{\ell_2})$.
		\end{lemma}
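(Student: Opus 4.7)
The plan is to replicate the Bernstein-type argument used for $\nabla u$, but one derivative higher. Concretely, using the polynomial bound on $|\nabla u|$ already proven (with constants $C_1$, $\ell_1$), I fix $x_0 \in \mathbb{R}^d$ and set $c := (C_1^2 2^{2\ell_1}+1)(1+|x_0|^{2\ell_1})$ so that $g(t,x) := c - |\nabla u(t,x)|^2 \geq 1+|x_0|^{2\ell_1}$ for all $x \in B_1(x_0)$ and $t \in [0,T]$. Then I define
\[
w(t,x) \;:=\; \frac{|\nabla^2 u(t,x)|^2}{\sqrt{g(t,x)}},
\]
and aim to bound $w$ pointwise.

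The second step is to derive a differential inequality for $w$. Differentiating the backward Kolmogorov equation $\mathcal{M}(u)=0$ once yields $\mathcal{M}(u_i) = b^k_i u_k + \tfrac{1}{2}\Lambda^{kl}_i u_{kl}$, and a second differentiation followed by summation produces
\[
\mathcal{M}(|\nabla^2 u|^2) \;=\; 2u_{ij} b^k_{ij} u_k + 4 u_{ij} b^k_i u_{kj} + u_{ij}\Lambda^{kl}_{ij} u_{kl} + 2 u_{ij}\Lambda^{kl}_i u_{klj} - \Lambda^{kl} u_{ijk}u_{ijl}.
\]
A parallel computation gives $\mathcal{M}(g) = -2u_i b^k_i u_k - u_i\Lambda^{kl}_i u_{kl} + \Lambda^{kl} u_{ik}u_{il}$. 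Combining these via the quotient rule $\mathcal{M}(w) = \mathcal{M}(|\nabla^2 u|^2)/\sqrt{g} + |\nabla^2 u|^2 \mathcal{M}(-g^{-1/2})$ produces two genuinely negative terms: the direct third-order dissipation $-\Lambda^{kl}u_{ijk}u_{ijl}/\sqrt{g}$, which by ellipticity Assumption \ref{WEA}(b) is at most $-\lambda|\nabla^3 u|^2/\sqrt{g}$, and the term $-\tfrac{1}{2}|\nabla^2 u|^2 \mathcal{M}(g)/g^{3/2}$, whose leading contribution $-\Lambda^{kl}u_{ik}u_{il}|\nabla^2 u|^2/(2g^{3/2})$ is bounded above by $-\tfrac{\lambda}{2}|\nabla^2 u|^4/g^{3/2} = -\tfrac{\lambda}{2}w^2/\sqrt{g}$.

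The key technical step is to absorb all the remaining mixed terms by Young's inequality. The third-order cross terms $2u_{ij}\Lambda^{kl}_i u_{klj}/\sqrt{g}$ from $\mathcal{M}(|\nabla^2 u|^2)/\sqrt{g}$ and the gradient-crossing term $\tfrac{1}{2}\Lambda^{kl}(|\nabla^2 u|^2)_k g_l/g^{3/2}$ from the mixed partials of $w$ are controlled by a small fraction of $-\lambda|\nabla^3 u|^2/\sqrt{g}$, using that $|\nabla \Lambda|$ is bounded (Assumption \ref{WEA}(c)) and that $|g_l| \leq 2|\nabla u||\nabla^2 u|$ with $|\nabla u|^2 \leq c$. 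All purely second-order terms, e.g.\ $u_{ij}b^k_{ij}u_k$, $u_{ij}\Lambda^{kl}_{ij}u_{kl}$, $4u_{ij}b^k_i u_{kj}$, etc., have polynomial coefficients by Assumptions \ref{WEA}(a) and \ref{WEA}(c) and, after dividing by $\sqrt{g}$, are bounded by $M(x) w$ with $M$ of polynomial growth. The net outcome is a pointwise differential inequality
\[
\mathcal{M}(w) \;\leq\; -\,m\,\frac{w^2}{\sqrt{g}} \,+\, M\,w,
\]
with $m>0$ universal and $M=M(x)$ of polynomial growth.

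The final step is identical in spirit to the proof for $\nabla u$: multiply by the same radial cut-off $\psi(x)=\tilde\psi(|x-x_0|)$, expand $\mathcal{M}(\psi w)$, and argue at the maximum of $\psi w$ on $[0,T]\times B_1(x_0)$. If the maximum is attained at $t=0$, use the bound on $|\nabla^2 f|$ and $|\nabla f|$ coming from Assumption \ref{WEA}(a) and the polynomial growth of $f$; if it is attained at an interior point in time, the maximum-principle inequality $\mathcal{M}(\psi w)\geq 0$ combined with $\nabla(\psi w)=0$ produces $w(t,x_0) \leq \tilde{M}\,g^{3/2}/m$, which is polynomial in $|x_0|$. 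In both cases $|\nabla^2 u(t,x_0)|^2 = w(t,x_0)\sqrt{g(t,x_0)}$ is polynomially bounded, as desired. The main obstacle is the algebraic bookkeeping of Step~3: verifying that the coefficients in Young's inequality can be chosen small enough to leave a strictly positive remainder of both $-|\nabla^3 u|^2/\sqrt{g}$ and $-w^2/\sqrt{g}$ after all third-order mixed terms are absorbed. The same template will then apply to $\nabla^3 u$ with $w=|\nabla^3 u|^2/\sqrt{c-|\nabla^2 u|^2}$, requiring also the third-order bound on $b$ and $\Lambda$ assumed in Assumption \ref{WEA}.
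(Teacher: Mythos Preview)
Your plan is essentially the paper's own Bernstein argument: same auxiliary function $w=|\nabla^2 u|^2/\sqrt{c-|\nabla u|^2}$, same choice of $c$, same localisation by a cut-off, and the same dichotomy at the maximum. There is, however, one bookkeeping slip that changes the final algebra slightly.

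You assert that all the ``purely second-order terms, e.g.\ $u_{ij}b^k_{ij}u_k$, \dots, after dividing by $\sqrt{g}$, are bounded by $M(x)w$''. The term $2u_{ij}b^k_{ij}u_k/\sqrt{g}$ is only \emph{linear} in $\nabla^2 u$; it cannot be majorised by a multiple of $w=|\nabla^2 u|^2/\sqrt{g}$ alone. Young's inequality gives
\[
\frac{2u_{ij}b^k_{ij}u_k}{\sqrt{g}}\;\le\; w \;+\; \frac{|\nabla^2 b|^2\,|\nabla u|^2}{\sqrt{g}},
\]
and the second piece is a polynomial constant in $x_0$ (using the polynomial bounds on $\nabla^2 b$, $\nabla u$ and $g\ge 1$). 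So the correct differential inequality carries an extra additive term,
\[
\mathcal{M}(w)\;\le\; -\,\tilde M\,w^2 \;+\; M\,w \;+\; C(x_0),
\]
exactly as the paper records. This is the one structural difference from the first-order case: at an interior-in-time maximum you no longer get $\psi w\le M/\tilde M$ directly but must solve the quadratic $\tilde N(\psi w)^2 - N(\psi w) - C(x_0)\le 0$, whose positive root is still polynomial in $|x_0|$. With that correction your argument is complete and coincides with the paper's.
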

		\begin{proof} \textbf{Step 1} Construct $ w$ based on $u$ such that $ w$ has a bounded estimation. 
			
		We consider $ w=\frac{\sum_{k,\ell}u_{k\ell}^2}{\sqrt{c-|\nabla u|^2}}$ and still use a local estimation. Here $c$ may be chosen as $(C_1^22^{2\ell_1+1}+1)(1+| {x_0}|^{2\ell_1})$. Then $c-|\nabla u|^2\geq 1+| {x_0}|^{2\ell_1}$ for $x\in B_1( {x_0})$.
		
		By simple calculation, we have:
		$$\begin{aligned}
			\mathcal{M}( w)=&\frac{2\sum_{k,\ell}u_{k\ell}(b_k\cdot \nabla u_\ell+b_\ell\cdot \nabla u_k+b_{k\ell}\cdot \nabla u+\frac{1}{2}\Lambda^{ij}_{k\ell}u_{ij}+\frac{1}{2}\Lambda^{ij}_ku_{ij\ell}+\frac{1}{2}\Lambda^{ij}_lu_{ijk})}{\sqrt{c-|\nabla u|^2}}\\
			&+\frac{\sum_{k}u_k(b_k\nabla u+\frac{1}{2}\Lambda_k^{ij}u_{ij})\sum_{k,\ell}u_{k\ell}^2}{(\sqrt{c-|\nabla u|^2})^3}
			-\Lambda^{ij}\frac{\sum_{k,\ell}u_{k\ell i}u_{k\ell j}}{\sqrt{c-|\nabla u|^2}}\\
			&-\Lambda^{ij}\frac{2\sum_k u_ku_{ki}\sum_{k,\ell}u_{k\ell}u_{k\ell j}+\frac{1}{2}\sum_ku_{ki}u_{kj}\sum_{k,\ell}u_{k\ell}^2}{(\sqrt{c-|\nabla u|^2})^3}\\
			&-\frac{3}{2}\Lambda^{ij}\frac{\sum_ku_ku_{ki}\sum_ku_ku_{kj}\sum_{k,\ell}u_{k\ell}^2}{(\sqrt{c-|\nabla u|^2})^5}.\\
		\end{aligned}$$
		
		Using the assumptions above,  one can similarly derive that
	\[
    \begin{aligned}
			\mathcal{M}( w)\leq & M w-\tilde{M} w^2+\frac{C_b}{2^{\ell_1}}(1+|x_0|^{\ell_b}) \\
		\end{aligned}
    \]
		Here $M=4dCC_1^2C_b(1+|x_0|^{2\ell_1+\ell_b})+\frac{dL}{2\eta}CC_1^2(1+|x_0|^{2\ell_1})$, $\tilde{M}=\frac{\lambda}{2\sqrt{c}}+ \frac{d\lambda}{3C(1+|x_0|^{\ell_1})}-\frac{dL\eta}{4}.$
		Also, if one chooses $\eta = \frac{\lambda}{CL(1+|x_0|^{\ell_1})}$, then $M>0$ and $\tilde{M}>0$.
		
		The remaining two steps are very similar to the 1st-order case. The only difference is the case where $\psi  w$ attains its maximum at $(t_0,x)$ with $t_0>0$.
		
		In detail, after the same calculations, we get ${N} w-\tilde{N}\psi  w^2+\frac{C_b}{2^{\ell_1}}(1+|x_0|^{\ell_b}) \geq 0$	with $N>0$ and $\tilde{N}>0$. Multiplying by $\psi$ on both sides, we have:
		$$\tilde{N}(\psi  w)^2-N\psi w-\frac{C_b}{2^{\ell_1}}(1+|x_0|^{\ell_b})\leq 0$$	
		
		Solve the equation we can get the polynomial growth in this case. 
		Thus we can derive that $|u_{k\ell}|$ has a polynomial upper bound.
		\end{proof}

		\begin{lemma}
			With the Assumptions \ref{SEA}, \ref{ass:initial} and \ref{WEA}, $\nabla^3 u$ has polynomial growth, namely, there exist $C_3>0$, $\ell_3 \geq 1$ independent of $t$ and $x$ such that $|\nabla^3 u(t,x)| \leq C_3 (1 + |x|^{\ell_3})$.
		\end{lemma}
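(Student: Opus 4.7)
I follow the same Bernstein-type scheme that handled $\nabla u$ and $\nabla^2 u$, one order higher. The auxiliary function will be
\[
w = \frac{\sum_{k,\ell,m}u_{k\ell m}^2}{\sqrt{c-|\nabla^2 u|^2}},
\]
where, on the ball $B_1(x_0)$, the constant $c$ is chosen so that $c-|\nabla^2 u|^2 \geq 1+|x_0|^{2\ell_2}$; this is possible thanks to the second-order polynomial estimate $|\nabla^2 u|\le C_2(1+|x|^{\ell_2})$ just established. The denominator is therefore uniformly bounded below by a positive power of $1+|x_0|$ and is smooth in a neighbourhood of $x_0$, so every differentiation that follows is legitimate.

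The plan is first to compute $\mathcal{M}(w)$. Differentiating the PDE $\partial_t u=\mathcal{L}u$ three times yields an equation for $u_{k\ell m}$ whose lower-order terms involve $b$, $\Lambda$, and their derivatives up to order three acting on $u$, $\nabla u$, $\nabla^2 u$, $\nabla^3 u$; by Assumption \ref{WEA}(a),(c) these are bounded by $C(1+|x|^{\ell})$. Expanding
\[
\mathcal{M}\!\left(\frac{|\nabla^3 u|^2}{\sqrt{c-|\nabla^2 u|^2}}\right)
\]
and grouping terms exactly as in the proof for $\nabla^2 u$, the principal ``bad'' quadratic term in $\nabla^4 u$ coming from $\mathcal{L}(|\nabla^3 u|^2)$ is absorbed by a fraction of the principal ``good'' term $\Lambda^{ij}\sum u_{k\ell m i}u_{k\ell m j}/\sqrt{c-|\nabla^2 u|^2}$, using the uniform ellipticity of $\Lambda$ from Assumption \ref{WEA}(b); the cross terms involving $\nabla^4 u$ and $\nabla^2 u\cdot \nabla^3 u$ are dominated by the same good term via Young's inequality with an $\varepsilon$ chosen small enough relative to $\lambda$. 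Terms not involving $\nabla^4 u$ are controlled by $M w - \tilde M w^2 + P(|x_0|)$, where $M,\tilde M >0$ and $P$ is a polynomial, exactly mirroring the second-order calculation; the coefficient in front of $-w^2$ comes from the last denominator $(c-|\nabla^2 u|^2)^{3/2}$ together with ellipticity, and can be kept positive by tuning the localization.

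Next I multiply $w$ by the same cut-off $\psi(x)=\tilde\psi(|x-x_0|)$ used before, whose gradient satisfies $|\tilde\psi'|+|\tilde\psi''|\le C_\varepsilon\tilde\psi^{\varepsilon}$. Repeating the computation $\mathcal{M}(\psi w)=\psi\mathcal{M}(w)+w\mathcal{M}(\psi)-\Lambda^{ij}\psi_i w_j$ and rewriting $\Lambda^{ij}\psi_i w_j$ as $\Lambda^{ij}\psi_i\psi^{-1}(\psi w)_j-\Lambda^{ij}\psi_i\psi^{-1}\psi_j w$, I obtain
\[
\mathcal{M}(\psi w)+\Lambda^{ij}\psi_i\psi^{-1}(\psi w)_j \leq N\,w - \tilde N\,\psi w^2 + Q(|x_0|),
\]
for positive constants $N,\tilde N$ and a polynomial $Q$. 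At a parabolic interior maximum $(t_0,x_*)$ of $\psi w$, the left-hand side is nonnegative, giving the quadratic inequality $\tilde N(\psi w)^2 - N(\psi w) - Q(|x_0|)\le 0$, from which $w(t,x_0)\le \psi(x_*)w(t_0,x_*)$ is bounded by a polynomial in $|x_0|$. At a boundary maximum with $t_0=0$, one controls $w$ directly by $|\nabla^3 f|^2 \sqrt{c-|\nabla^2 f|^2}$, which is polynomial by the smoothness and polynomial growth of $f\in C_p^\infty(\R^d)$. In either case $|\nabla^3 u(t,x_0)|^2 \le w(t,x_0)\sqrt{c-|\nabla^2 u|^2}$ is polynomial in $|x_0|$, yielding the desired bound.

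The main technical obstacle, as in the previous lemma, is the bookkeeping needed to verify that after absorbing the fourth-derivative cross terms the coefficient of $-w^2$ remains strictly positive; this amounts to picking the Young-inequality parameters $\delta,\eta$ as explicit small multiples of $\lambda/L$ that depend polynomially on $(1+|x_0|)$, exactly as done for $\nabla^2 u$. Once this algebraic tuning is in place the rest of the argument is routine. Note that Assumption \ref{WEA} gives us bounds only on $\nabla^k b,\nabla^k\Lambda$ up to $k=3$, which is precisely what is needed when $\mathcal{L}$ is applied to $|\nabla^3 u|^2$; no fourth derivatives of the coefficients appear.
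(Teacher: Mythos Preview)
Your proposal is correct and is essentially the same Bernstein-type argument the paper uses: the paper also takes the auxiliary function $w=\dfrac{\sum_{k,\ell,m}u_{k\ell m}^2}{\sqrt{c-\sum_{k,\ell}u_{k\ell}^2}}$ with $c$ chosen so that $c-|\nabla^2 u|^2\ge 1+|x_0|^{2\ell_2}$ on $B_1(x_0)$, computes $\mathcal{M}(w)\le Mw-\tilde Mw^2+C$, and then localizes with the same cut-off to reach the maximum-principle dichotomy. Your remark that only derivatives of $b,\Lambda$ up to order three enter is exactly the point, and the minor slip in the $t_0=0$ case (you wrote $|\nabla^3 f|^2\sqrt{c-|\nabla^2 f|^2}$ where one actually bounds $w(0,x)\le|\nabla^3 f(x)|^2$ and then multiplies back by $\sqrt{c}$) does not affect the conclusion.
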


            The 3-order case is very similar to the 2-order case, as we still consider $ w=\frac{\sum_{k,\ell,m}u_{k\ell m}^2}{\sqrt{c-\sum_{k,\ell}u_{k\ell}^2}}$ and give a local estimation. Here $c$ may be chosen as $(C_2^22^{2\ell_2+1}+1)(1+| {x_0}|^{2\ell_2})$(for a fixed $ {x_0}$). Then $c-\sum_{k,\ell}u_{k\ell}^2\geq 1+| {x_0}|^{2\ell_2}$ for $x\in B_1( {x_0})$. 
            
            The simple calculation of the auxiliary function can verify it:
            
		$$\begin{aligned}
			\mathcal{M}( w) = & \frac{2\sum_{k,\ell,m}u_{k\ell m}\left[\left( b\cdot \nabla u +\frac{1}{2}\Lambda^{ij}u_{ij}\right)_{k\ell m}-b\cdot \nabla u_{k\ell m}-\frac{1}{2}\Lambda^{ij}u_{k\ell mij}\right]}{\sqrt{c-\sum_{k,\ell}u_{k\ell}^2}}\\
			+& \frac{\sum_{k,\ell,m}u_{k\ell m}^2\sum_{k,\ell}u_{k\ell}[\left(b\cdot \nabla u+\frac{1}{2}\Lambda^{ij}u_{ij}\right)_{k\ell}-b\cdot \nabla u_{k\ell}-\frac{1}{2}\Lambda^{ij}u_{k\ell}u_{k\ell ij}]}{(\sqrt{c-\sum_{k,\ell}u_{k\ell}^2})^3}\\
			-& \Lambda^{ij}\frac{u_{k\ell mi}u_{k\ell mj}}{\sqrt{c-\sum_{k,\ell}u_{k\ell}^2}}- \Lambda^{ij}\frac{2\sum_{k,\ell,m}u_{k\ell m}u_{k\ell mi}\sum_{k,\ell}u_{k\ell}u_{k\ell j}+\frac{1}{2}\sum_{k,\ell}u_{k\ell i}u_{k\ell j}\sum_{k,\ell,m}u_{k\ell m}^2}{(\sqrt{c-\sum_{k,\ell}u_{k\ell}^2})^3}\\
			-& \frac{3}{2}\Lambda_{ij}\frac{\sum_{k,\ell}u_{k\ell}u_{k\ell i}\sum_{k,\ell}u_{k\ell}u_{k\ell j}\sum_{k,\ell,m}u_{k\ell m}^2}{(\sqrt{c-\sum_{k,\ell}u_{k\ell}^2})^5}\\
		\end{aligned}$$
	
		The trick is the same as the 2nd-order case, and we can also derive $\mathcal{M}( w)\leq M w-\tilde{M} w^2+C$ with positive $M, \tilde{M}, C$. Thus, we have that $|u_{k\ell m}|$ has a polynomial upper bound.

\section{Missing proofs for the relative entropy bound estimate}

\subsection{Proof of Lemma \ref{lmm:fisher1}}\label{sec:fisher1}

\begin{proof}

We first prove the following claims:

\textbf{Claim 1}. There exist positive constants $c_0$, $c_1 $ independent of $t$, $h$, $\b{\xi}$ such that
\begin{equation*}
    \frac{d}{dt} \mathcal{H}(\bar{\rho}^{(\b{\xi},h)}_t|\pi) \leq -c_0  \mathcal{I}(\bar{\rho}^{(\b{\xi},h)}_t) + c_1.
\end{equation*}

\textbf{Claim 2}. There is a positive constant $c_2 $ independent of $t$, $h$, $\b{\xi}$ ($c_2$ depends on $\mathcal{H}(\rho_0|\pi)$) such that
\begin{equation*}
    \mathcal{H}(\bar{\rho}^{(\b{\xi},h)}_t|\pi) \leq c_2.
\end{equation*}

Indeed, for Claim 1, using Fokker-Planck equation \eqref{FP} for $\bar{\rho}^{(\b{\xi},h)}_t$, we have
\begin{equation*}
    \begin{aligned}
    \frac{d}{dt}\mathcal{H}(\bar{\rho}^{(\b{\xi},h)}_t|\pi) = \int \left(\nabla \log \bar{\rho}^{(\b{\xi},h)}_t - \beta b\right)\cdot \left(\bar{\rho}^{(\b{\xi},h)}_t \hat{b}^{\xi,h}_t - \beta^{-1}\nabla \bar{\rho}^{(\b{\xi},h)}_t \right)dx.
    \end{aligned}
\end{equation*}
Note that both $b$ and $b^{\xi,h}$ have polynomial upper bounds.
By Young's inequality, Jensen's inequality, and the uniform moment bound in Lemma \ref{UITMB}, Claim 1 then follows.

For Claim 2 above, we control the negative Fisher information $-\mathcal{I}(\bar{\rho}^{\b{\xi}}_t)$ by the negative relative information $-\mathcal{I}(\bar{\rho}^{(\b{\xi},h)}_t|\pi) := -\int |\nabla \log \frac{\bar{\rho}^{(\b{\xi},h)}_t}{\pi}|^2 \bar{\rho}^{(\b{\xi},h)}_t dx$ via Young's inequality and obtain that
\begin{equation*}
    -\mathcal{I}(\bar{\rho}^{(\b{\xi},h)}_t) \leq -\frac{1}{2}\mathcal{I}(\bar{\rho}^{(\b{\xi},h)}_t|\pi) + \beta^2\mathbb{E}\left[|b(\bar{X}_t)|^2\Big|\mathcal{F}_{\xi}\right].
\end{equation*}
Then, similarly, using Young's inequality, Jensen's inequality,  the uniform moment bound in Lemma \ref{UITMB}, and since $\pi$ satisfies a log-Sobolev inequality with constant $\kappa=C^{LS}_\pi$ by Assumption \ref{SGLDA}, we have
\begin{equation*}
     \frac{d}{dt}\mathcal{H}(\bar{\rho}^{(\b{\xi},h)}_t|\pi) \leq -\frac{1}{4\beta \kappa} \mathcal{H}(\bar{\rho}^{(\b{\xi},h)}_t|\pi) + c_1'.
\end{equation*}
Therefore, Claim 2 holds due to Gr\"onwall's inequality.

Now, by Claims 1, 2 above, for any $T>0$ one has
\begin{equation*}
    \begin{aligned}
    &\quad \int_0^T e^{-A_0(T-s)} \mathcal{I}(\bar{\rho}^{(\b{\xi},h)}_s) ds\\ &\leq -\tilde{c}_0  \int_0^T e^{-A_0(T-s)}  \left(\frac{d}{ds}\mathcal{H}(\bar{\rho}^{(\b{\xi},h)}_s|\pi) \right)ds + \tilde{c}_1 \int_0^T e^{-A_0(T-s)}  ds
    \end{aligned}
\end{equation*}
Using integration by parts and the uniform bound of the relative entropy, one can obtain a bound independent of the choice of $T$ and $\b{\xi}$ for this integtral.
\end{proof}

\subsection{The backward conditional expectation}

The proof in this section is nearly the same as that of Lemmas A.2 - A.3 in \cite{li2022sharp}. The only difference during the analysis is that, we avoid using the boundedness condition of derivatives of the drift function $b$. Instead, we make use of their polynomial upper bounds in most places below. However, since the moment of T-SGLD is uniformly bounded, we can still obtain similar upper bounds as in \cite{li2022sharp}.

\begin{lemma}\label{integrationbyparts}
Recall the definitions of $X(t)$, $\bar{X}(t)$, $\xi_k$ and $\bar{\rho}$ in section \ref{sec:SGLD}. Suppose the assumptions used in Theorem \ref{thm:tsgld}  hold. Then
\begin{equation*}
    \mathbb{E}_{\xi_k}\mathbb{E}\Big[\Big|\mathbb{E}\Big[ \bar{X}(T_{k})-\bar{X}(t)\mid \bar{X}(t)\Big]\Big|^2 \mid \xi_k\Big]\leq Ch^{2}(1+\mathcal{I}(\bar{\rho}_{T_{k-1}}^{(h)})).
\end{equation*}
\end{lemma}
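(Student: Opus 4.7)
The plan is to exploit the Gaussian structure of the one-step transition of the T-SGLD scheme together with a direct differentiation identity that rewrites the conditional mean of $\bar{X}(T_k)-\bar{X}(t)$ as a logarithmic gradient of the numerical density, which naturally produces the Fisher information. Fix $t$ in the relevant sub-interval and condition on the batch $\xi_k$ used on that interval. By Bayes' formula,
\[
\mathbb{E}[\bar{X}(T_k)-\bar{X}(t)\mid \bar{X}(t)=x,\xi_k]
=\int (y-x)\,\frac{\bar{\rho}_{T_k}^{(\xi_k,h)}(y)\,p_t(x\mid y)}{\bar{\rho}_t^{(\xi_k,h)}(x)}\,dy,
\]
where $p_t(x\mid y)$ is Gaussian in $x$ with mean $y+b^{\xi_k,h}(y)(t-T_k)$ and covariance $2\beta^{-1}(t-T_k)I$.

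The crux of the argument is the identity
\[
x-y-b^{\xi_k,h}(y)(t-T_k)=-2\beta^{-1}(t-T_k)\,\nabla_x\log p_t(x\mid y).
\]
Writing $y-x=-b^{\xi_k,h}(y)(t-T_k)-\bigl(x-y-b^{\xi_k,h}(y)(t-T_k)\bigr)$ and substituting into the Bayes representation, the gradient can be pulled out of the $y$-integral (the ``integration by parts'' step), which yields
\[
\mathbb{E}[\bar{X}(T_k)-\bar{X}(t)\mid \bar{X}(t)=x,\xi_k]
=-(t-T_k)\,\mathbb{E}\bigl[b^{\xi_k,h}(\bar{X}(T_k))\bigm|\bar{X}(t)=x,\xi_k\bigr]+2\beta^{-1}(t-T_k)\,\nabla_x\log\bar{\rho}_t^{(\xi_k,h)}(x).
\]
Squaring, integrating against $\bar{\rho}_t^{(\xi_k,h)}$, applying Jensen's inequality to the conditional drift term, and using $(t-T_k)\le h$ give
\[
\mathbb{E}\bigl[|\mathbb{E}[\bar{X}(T_k)-\bar{X}(t)\mid \bar{X}(t)]|^2\bigm|\xi_k\bigr]
\le Ch^2\,\mathcal{I}(\bar{\rho}_t^{(\xi_k,h)})+Ch^2\,\mathbb{E}\bigl[|b^{\xi_k,h}(\bar{X}(T_k))|^2\bigm|\xi_k\bigr].
\]

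To conclude, I would take expectation in $\xi_k$. The drift term is bounded by a uniform constant using the polynomial growth of $b^{\xi}$ in Assumption \ref{SGLDA}(a) (together with $|b^{\xi,h}|\le|b^{\xi}|$) and the uniform moment estimate of Lemma \ref{UITMB}. For the Fisher-information term, convexity of $\mathcal{I}$ yields $\mathbb{E}_{\xi_k}[\mathcal{I}(\bar{\rho}_t^{(\xi_k,h)})]\le\mathbb{E}[\mathcal{I}(\bar{\rho}_t^{(\b{\xi},h)})]$, and then the short-time growth estimate of Lemma \ref{lmm:fisher2} (applied conditionally on $\b{\xi}$), together with one more use of Fisher-convexity, passes from time $t$ back to the preceding grid point, absorbing the extra $O(h^2)$ into the right-hand side. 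The main obstacle I anticipate is this last bookkeeping step: chaining Fisher-information convexity across the batch-averaging with the one-step Fisher growth so that the final bound reads exactly $Ch^2\bigl(1+\mathcal{I}(\bar{\rho}_{T_{k-1}}^{(h)})\bigr)$ without picking up additional $k$- or $t$-dependent prefactors.
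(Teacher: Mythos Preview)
Your core identity---differentiating the Gaussian kernel in $x$ rather than in $y$---is correct and gives a cleaner two-term decomposition than the paper's. The paper instead splits $y-x$ into three pieces adapted to $\nabla_y p_t(x\mid y)$, which forces the Jacobian factor $I+(t-T_k)\nabla b^{\xi_k,h}(y)$ into the decomposition; integration by parts in $y$ then produces $\nabla_y\log\bar\rho_{T_k}^{(h)}$ directly, and hence $\mathcal{I}(\bar\rho_{T_k}^{(h)})$ at the grid point without any extra Fisher-growth step. Your route instead lands on $\mathcal{I}(\bar\rho_t^{(\xi_k,h)})$ at the intermediate time $t$, so you genuinely need one more step to reach the grid point.

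The gap is precisely where you flagged it. Your proposed chain---convexity to pass to $\mathbb{E}_{\b\xi}[\mathcal{I}(\bar\rho_t^{(\b\xi,h)})]$, then Lemma~\ref{lmm:fisher2}, then ``one more use of Fisher-convexity''---fails at the last application: convexity of $\mathcal{I}$ gives $\mathcal{I}(\bar\rho_{T_k}^{(h)})\le\mathbb{E}_{\b\xi}[\mathcal{I}(\bar\rho_{T_k}^{(\b\xi,h)})]$, not the reverse, so you cannot descend back to $\mathcal{I}(\bar\rho_{T_k}^{(h)})$ that way. The fix is simpler than your detour through $\bar\rho^{(\b\xi,h)}$. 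Since $\bar\rho_{T_k}^{(\xi_k,h)}=\bar\rho_{T_k}^{(h)}$ and the evolution $\bar\rho_{T_k}^{(h)}\mapsto\bar\rho_t^{(\xi_k,h)}$ is exactly one Euler step (pushforward by $x\mapsto x+(t-T_k)b^{\xi_k,h}(x)$ followed by Gaussian convolution), the proof of Lemma~\ref{lmm:fisher2} applies verbatim with $\bar\rho_{T_k}^{(h)}$ as the starting density and yields $\mathcal{I}(\bar\rho_t^{(\xi_k,h)})\le 4\,\mathcal{I}(\bar\rho_{T_k}^{(h)})+Ch^2$ for each fixed $\xi_k$, with $C$ independent of $\xi_k$. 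Taking $\mathbb{E}_{\xi_k}$ then closes the estimate as stated.
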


\begin{proof}

By Bayes' law, we have
\begin{equation*}
\begin{aligned}
     \mathbb{E} [\bar{X}(T_k) - \bar{X}(t) | \bar{X}(t)  = x,\xi_k ] &= \int (y-x)P(\bar{X}(T_k) = y | \bar{X}(t) = x,\xi_k) dy\\
     & = \int (y-x) \frac{\bar{\rho}(T_k)(y)P(\bar{X}(t) = x | \bar{X}(T_k) = y,\xi_k)}{\bar{\rho}_t^{(\xi_k,h)}(x)} dy.
\end{aligned}
\end{equation*}
Note that $P(\bar{X}(t) = x | \bar{X}(T_k) = y,\xi_k)$ is Gaussian-type, and consequently its derivative is also similar to the Gaussian form. Motovated by this, we split the term $ \mathbb{E} [\bar{X}(T_k) - \bar{X}(t) | \bar{X}(t) = x, \xi_k ]$ into three parts and use integration by parts to handle them.
In detail, let
\begin{equation*}
    \begin{aligned}
    y - x &= \left(I_d + (t-T_k)  \nabla b^{\xi_k,h} (y) \right) \cdot \left(y - x + (t-T_k) b^{\xi_k,h}(y) \right)\\
    & \quad- (t-T_k)  \nabla b^{\xi_k,h}(y) \cdot \left(y - x + (t-T_k) b^{\xi_k,h}(y) \right)\\
    & \quad- (t-T_k) \cdot b^{\xi_k,h}(y)\\
    & := a_1(x,y) - a_2(x,y) - a_3(x,y),
    \end{aligned}
\end{equation*}
and define
\begin{equation}\label{Iidef}
    J_i(x) := \mathbb{E}\left[a_i( \bar{X}(t),\bar{X}(T_k)) | \bar{X}(t) = x \right], \quad i=1,2,3.
\end{equation}

(a) For the term $J_1$, by definition,
\begin{multline*}
   \nabla_y P(\bar{X}(t) = x | \bar{X}(T_k) = y,\xi_k) = -\frac{\left(4\pi \beta^{-1} (t-T_k)\right)^{-\frac{d}{2}}}{2\beta^{-1}(t-T_k)}\\
   \exp\left(-\frac{|x-y-b^{\xi_k}(y)(t-T_k)|^2}{4\beta^{-1}(t-T_k)}\right)
   \left(I_d + (t-T_k)  \nabla b^{\xi_k,h} (y) \right) \cdot \left(y - x + (t-T_k) b^{\xi_k,h}(y) \right).
\end{multline*}
Then integration by parts gives
\begin{equation*}
    J_1(x) = 2\beta^{-1}(t-T_k)\int  \frac{\nabla_y \bar{\rho}_{T_k}(y)}{\bar{\rho}_t^{\xi_k,h}(x)} P(\bar{X}(t) = x | \bar{X}(T_k) = y, \xi_k) dy.
\end{equation*}
Applying Bayes' law again, we have
\begin{equation*}
    J_1(x) = 2\beta^{-1}(t-T_k) \int  \frac{\nabla_y \bar{\rho}_{T_k}(y)}{\bar{\rho}_{T_k}(y)} P(  \bar{X}(T_k) = y|\bar{X}(t) = x, \xi_k) dy.
\end{equation*}
By Jensen's inequality,
\begin{equation*}
\begin{aligned}
    \mathbb{E}\left[|J_1(\bar{X}_t)|^2\Big| \xi_k \right] &\leq 4\beta^{-2}(t-T_k)^2 \int \bar{\rho}_t^{\xi_k,h}(x) \int  \left|\frac{\nabla_y \bar{\rho}_{T_k} (y)}{\bar{\rho}_{T_k}(y)}\right|^2 P(  \bar{X}(T_k) = y|\bar{X}(t) = x ,\xi_k) dy dx\\
    & = 4\beta^{-2}(t-T_k)^2 \mathcal{I}(\bar{\rho}_{T_k}) := c(t-T_k)^2\mathcal{I}(\bar{\rho}_{T_k}).
\end{aligned}
\end{equation*}

(b) For the term $J_2$ and $J_3$, using the polynomial growth condition indicated in Lemma \ref{DPL2}, the control obtained in Lemma \ref{UITMB}, and H\"older's inequality, it is easy to obtain that
\begin{equation*}
    \mathbb{E}\left[|J_2(\bar{X}(t))|^2\Big| \xi_k\right]   \leq   c'(t-T_k)^3,
    \quad \mathbb{E}\left[|J_3(\bar{X}(t))|^2\Big| \xi_k\right] \leq c''(t-T_k)^2.
\end{equation*}
where $c'$, $c''$ are positive constants independent of $k$ and $\xi_k$.    

Hence, Young's inequality and the above estimates give the following
\begin{multline*}
\quad\int |\mathbb{E} [\bar{X}(T_k) - \bar{X}(t) | \bar{X}(t)  = x, \xi_k ]|^2 \bar{\rho}_t^{\xi_k}(x)dx \\
\leq 3\sum_{i=1}^3 \mathbb{E} \left[|J_i(\bar{X}(t))|^2\Big| \xi_k \right] \leq \tilde{c} h^2  \left( 1 + \mathcal{I}(\bar{\rho}(T_k))\right).
\end{multline*}
\end{proof}

\subsection{The random batch error estimate}\label{sec:RBerror}

We now focus on $\mathbb{E}_{\xi_k,\tilde{\xi}_k} \left[\int |b^{\xi_k}-b|^2 \frac{|\bar{\rho}_t^{\tilde{\xi}_k} - \bar{\rho}_t^{\xi_k}|^2}{\bar{\rho}_t^{\tilde{\xi}_k}} dx\right]$. Again, we use the same ideas from Lemmas A.4 - A.6 of \cite{li2022sharp}. We only provide a sketch of the proof here.

\begin{lemma}\label{lmm:i}
Recall the notations 
\begin{equation*}
    K_1 := \sqrt{\frac{\beta}{2}}\,\tilde{b}(y)(x-y),\quad \tilde{b}(y) := \sqrt{\frac{\beta}{2}} (b^{\xi_k,h}-b^{\tilde{\xi}_k,h})(y).
\end{equation*}
Then under the conditions of Theorem \ref{thm:tsgld}, there exists a positive constant $c$ independent of $k$ and $\xi_k$,$\tilde{\xi}_k$ such that for $h<1$ and $t\in [T_k,T_{k+1})$, 
\begin{equation*}
    \int \bar{\rho}_t^{\tilde{\xi}_k,h} (x) \left(\int K_1  P(\bar{X}(T_k) = y|\bar{X}'(t) = x,\xi_k,\tilde{\xi}_k) dy \right)^2dx \leq ch^{2-2\alpha} \left(1+\mathcal{I}(\bar{\rho}_{T_k})\right).
\end{equation*}
\end{lemma}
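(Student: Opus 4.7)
The plan is to mimic the integration-by-parts strategy from the proof of Lemma \ref{integrationbyparts}, with the key new feature being that the extra factor $\tilde{b}(y)$ inside $K_1$ must be differentiated as well, which is precisely where the $h^{-\alpha}$ loss will enter. First I would use Bayes' law to rewrite
\begin{equation*}
P(\bar{X}(T_k)=y|\bar{X}'(t)=x,\xi_k,\tilde{\xi}_k)=\frac{\bar{\rho}_{T_k}(y)\,P(\bar{X}'(t)=x|\bar{X}'(T_k)=y,\tilde{\xi}_k)}{\bar{\rho}_t^{\tilde{\xi}_k,h}(x)},
\end{equation*}
where the factor in the numerator is a Gaussian in $x$ with mean $y+(t-T_k)b^{\tilde{\xi}_k,h}(y)$ and covariance $2\beta^{-1}(t-T_k)I_d$. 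Then I would decompose
\begin{equation*}
x-y=-a_1(x,y)+a_2(x,y)+a_3(x,y)
\end{equation*}
with $a_1,a_2,a_3$ defined exactly as in Lemma \ref{integrationbyparts} (but with $b^{\tilde{\xi}_k,h}$ replacing $b^{\xi_k,h}$), and control the three resulting contributions to $\int K_1 P(y|x)dy$ separately.

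For the $a_1$ piece, which is the leading term, I would exploit the identity $a_1(x,y)\,P(x|y)=-2\beta^{-1}(t-T_k)\nabla_y P(x|y)$ to integrate by parts in $y$ against $\tilde{b}(y)\bar{\rho}_{T_k}(y)$, and then apply Bayes' law once more to convert the result into a conditional expectation:
\begin{equation*}
\sqrt{\tfrac{\beta}{2}}\int\tilde{b}(y)\cdot(-a_1(x,y))\,P(y|x)\,dy = -C_\beta(t-T_k)\,\mathbb{E}\!\left[\nabla\!\cdot\!\tilde{b}(\bar{X}(T_k))+\tilde{b}(\bar{X}(T_k))\cdot\nabla\log\bar{\rho}_{T_k}(\bar{X}(T_k))\,\Big|\,\bar{X}'(t)=x\right].
\end{equation*}
Squaring, integrating against $\bar{\rho}_t^{\tilde{\xi}_k,h}$, and applying Jensen's inequality produces an upper bound of the form $C(t-T_k)^2\bigl(\mathbb{E}|\nabla\tilde{b}(\bar{X}(T_k))|^2+\mathbb{E}[|\tilde{b}|^2|\nabla\log\bar{\rho}_{T_k}(\bar{X}(T_k))|^2]\bigr)$. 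Here I would invoke Lemma \ref{DPL} for the pointwise estimate $|\nabla b^{\xi,h}|\le C(|b^{\xi,h}|+1)\le Ch^{-\alpha}$, so that $|\nabla\tilde{b}|^2\le Ch^{-2\alpha}$; combined with the uniform-in-batch boundedness of $\tilde{b}$ from Assumption \ref{SGLDA}(b), this yields the target $Ch^{2-2\alpha}\bigl(1+\mathcal{I}(\bar{\rho}_{T_k})\bigr)$.

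For the $a_2$ and $a_3$ contributions no integration by parts is required. I would apply Jensen's inequality and then use the pointwise bounds $|b^{\tilde{\xi}_k,h}|,|\nabla b^{\tilde{\xi}_k,h}|\le Ch^{-\alpha}$, the uniform boundedness of $\tilde{b}$, and the fact that $y - x + (t-T_k)b^{\tilde{\xi}_k,h}(y)$ corresponds pathwise to $-\sqrt{2\beta^{-1}}(W_t-W_{T_k})$ with conditional second moment of order $t-T_k$, giving contributions of orders $h^{3-2\alpha}$ and $h^2$ respectively, both absorbed into $Ch^{2-2\alpha}$ since $h<1$ and $\alpha\in(0,1/2]$. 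The main obstacle, and the reason the final rate is $h^{2-2\alpha}$ rather than the sharp $h^2$, is that the integration by parts on $a_1$ forces one spatial derivative onto $\tilde{b}$, and under the superlinear growth condition $\nabla\tilde{b}$ only admits the $O(h^{-\alpha})$ pointwise bound from Lemma \ref{DPL}, unlike the globally Lipschitz setting of \cite{li2022sharp}. Uniformity of all constants in $k$, $\xi_k$, $\tilde{\xi}_k$ will come from the uniform-in-time, uniform-in-batch moment estimate of Lemma \ref{UITMB} together with the uniform constants in Assumptions \ref{SGLDA} and \ref{PGE}.
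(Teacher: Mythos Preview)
Your proposal is correct and follows essentially the same three-term decomposition and integration-by-parts strategy as the paper's proof. The only notable difference is in bookkeeping: after integrating by parts you bound $|\nabla\tilde{b}|\le Ch^{-\alpha}$ (via Lemma~\ref{DPL}) and $|\tilde{b}|\le C$ (via Assumption~\ref{SGLDA}(b)), whereas the paper instead uses the polynomial bound on $\nabla\tilde{b}$ together with $|\tilde{b}|\le Ch^{-\alpha}$; both allocations yield the same $ch^{2-2\alpha}(1+\mathcal{I}(\bar{\rho}_{T_k}))$. One small point to make explicit: your uniform bound on $\tilde{b}$ requires that the taming map $b\mapsto b/(1+\psi(h^{\alpha}|b|))$ is Lipschitz in $b$ uniformly in $h$, which is true (its Jacobian has operator norm at most a fixed constant) but is not literally the content of Assumption~\ref{SGLDA}(b), so you should insert a one-line justification there.
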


\begin{proof}
The technique here is very similar to the one we have used in the proof of Lemma \ref{integrationbyparts}. Again, we split the term $K_1$ into the following three parts:
\begin{equation*}
    \begin{aligned}
    \tilde{b}(y) \cdot (y - x) &= \tilde{b}(y) \cdot \left(I_d + (t-T_k)  \nabla b^{\tilde{\xi}_k,h} (y) \right) \cdot \left(y - x + (t-T_k) b^{\tilde{\xi}_k,h}(y) \right)\\
    & \quad- (t-T_k) \tilde{b}(y) \cdot \nabla b^{\tilde{\xi}_k,h}(y) \cdot \left(y - x + (t-T_k) b^{\tilde{\xi}_k,h}(y) \right)\\
    & \quad- (t-T_k) \tilde{b}(y) \cdot b^{\tilde{\xi}_k,h}(y)\\
    & := \bar{a}_1(x,y) - \bar{a}_2(x,y) - \bar{a}_3(x,y),
    \end{aligned}
\end{equation*}
and define
\begin{equation*}
    \bar{J_i}(x) := \mathbb{E}\left[\bar{a}_i(\bar{X}(T_k), \bar{X}'(t)) \Big| \bar{X}'(t) = x, \xi_k,\tilde{\xi}_k  \right], \quad i=1,2,3.
\end{equation*}
Similarly, for the $\bar{J_1}$ term, using Bayes' formula and integration by parts, we have
\begin{equation*}
    \begin{aligned}
    \bar{J_1}(x) &= 2\beta^{-1}\int \tilde{b}(y) \cdot \left(I_d + (t-T_k)  \nabla b^{\tilde{\xi}_k,h} (y) \right) \cdot \frac{\beta}{2}\left(y - x + (t-T_k) b^{\tilde{\xi}_k,h}(y) \right)\\
    &\quad\frac{\bar{\rho}_{T_k}(y)}{\bar{\rho}_t^{\tilde{\xi}_k,h}(x)}  P(\bar{X}'(t) = x|\bar{X}(T_k) = y ) dy\\
    &= 2\beta^{-1}(t-T_k)\int \left(\nabla \tilde{b}(y) + \tilde{b}(y) \frac{\nabla \bar{\rho}_{T_k}(y)}{\bar{\rho}_{T_k}(y)}\right) P(\bar{X}(T_k) = y| \bar{X}'(t) = x,\xi_k,\tilde{\xi}_k) dy. 
    \end{aligned}
\end{equation*}
Since $\tilde{b}$ has an $O(h^{-\alpha})$ upper bound, and $\nabla \tilde{b}$ has a polynomial upper bound, one has by Jensen's inequality again that
\begin{equation}\label{433}
     \beta^2\mathbb{E} \left[| \bar{J_1}(\bar{X}'(t)) |^2\Big| \xi_k,\tilde{\xi}_k\right] \leq ch^{2-2\alpha}\left(1+\mathcal{I}(\bar{\rho}_{T_k})\right),
\end{equation}
and the constant $c$ is independent of $k$, $\xi_k$ and $\tilde{\xi}_k$. Also, using exactly the same derivation for Lemma \ref{integrationbyparts} above as well as the $O(h^{-\alpha})$ upper bound for $\tilde{b}$, we obtain that
\begin{equation*}
   \mathbb{E}\left[|\bar{J}_2(\bar{X}'_t)|^2\Big| \xi_k,\tilde{\xi}_k\right] \leq ch^{3-2\alpha},
\quad
    \mathbb{E}\left[|\bar{J}_3(\bar{X}'_t)|^2\Big| \xi_k,\tilde{\xi}_k\right] \leq c  h^{2-2\alpha},
\end{equation*}
where the constant $c$ is independent of $k$, $\xi_k$ and $\tilde{\xi}_k$. 
\end{proof}

We also have the following estimate.
\begin{lemma}\label{lmm:ii}
Recall the notations 
\begin{equation*}
    K_2 := \left(-\sqrt{\frac{\beta}{2}}\,\tilde{b}(y) \cdot b^{\tilde{\xi}_k,h}(y)- \frac{1}{2} |\tilde{b}(y)|^2\right)(t-T_k),\quad \tilde{b}(y) := \sqrt{\frac{\beta}{2}} \left(b^{\tilde{\xi}_k,h} - b^{\xi_k,h}\right)(y).
\end{equation*}
Then under the conditions of Theorem 
\ref{thm:tsgld}, there exists a positive constant $c$ independent of $k$ and $\xi_k$ such that for $h<1$, $t\in [T_k,T_{k+1})$, 
\begin{equation*}
    \int \bar{\rho}_t^{\tilde{\xi}_k} (x) \left(\int K_2  P(\bar{X}(T_k) = y|\bar{X}'(t) = x,\xi_k,\tilde{\xi}_k) dy \right)^2dx \leq ch^2.
\end{equation*}
The bound is independent of $k$, $\tilde{\xi}_k$, $\xi_k$.
\end{lemma}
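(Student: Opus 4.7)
The plan is to avoid the blunt taming bound $|b^{\xi,h}| \le 2\gamma^{-1}h^{-\alpha}$ (which would only yield an $O(h^{2-4\alpha})$ rate) and instead exploit the sharper pointwise bound $|b^{\xi,h}(y)| \le |b^{\xi}(y)| \le C(1+|y|^{\ell})$ from \eqref{eq:bhbound} together with the polynomial growth in Assumption \ref{SGLDA}(a), combined with the uniform-in-time moment bound Lemma \ref{UITMB}. The explicit factor $(t-T_k) \le h$ inside $K_2$ then gives the desired $O(h^2)$ rate without any cancellation argument.

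First, I would apply Jensen's inequality to the inner integral against the conditional probability density $P(\bar{X}(T_k) = y \mid \bar{X}'(t) = x, \xi_k, \tilde{\xi}_k)$ to obtain
\[
\Big(\int K_2(y)\, P(\bar{X}(T_k) = y \mid \bar{X}'(t) = x, \xi_k, \tilde{\xi}_k)\, dy\Big)^2
\le \int K_2(y)^2\, P(\bar{X}(T_k) = y \mid \bar{X}'(t) = x, \xi_k, \tilde{\xi}_k)\, dy.
\]
Integrating this against $\bar{\rho}_t^{\tilde{\xi}_k,h}(x)$ and using the tower property of conditional expectations, the left-hand side of the claim is dominated by $\mathbb{E}[K_2(\bar{X}(T_k))^2 \mid \xi_k, \tilde{\xi}_k]$. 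Since $\bar{X}(T_k)$ is measurable with respect to $(\xi_0,\dots,\xi_{k-1})$, it is independent of $\xi_k$ and $\tilde{\xi}_k$, so this conditional expectation equals the unconditional one.

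Next, using $|\tilde{b}(y)| \le \sqrt{\beta/2}\,(|b^{\xi_k}(y)| + |b^{\tilde{\xi}_k}(y)|) \le C(1+|y|^\ell)$ and the analogous bound for $|b^{\tilde{\xi}_k,h}(y)|$, I would deduce the pointwise estimate
\[
|K_2(y)|
\le (t-T_k)\left(\sqrt{\tfrac{\beta}{2}}\,|\tilde{b}(y)|\,|b^{\tilde{\xi}_k,h}(y)| + \tfrac{1}{2}|\tilde{b}(y)|^2\right)
\le C\, h\, (1+|y|^{2\ell}),
\]
whence $K_2(y)^2 \le C h^2 (1+|y|^{4\ell})$. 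Substituting into $\mathbb{E}[K_2(\bar{X}(T_k))^2]$ and invoking Lemma \ref{UITMB} to bound the polynomial moments uniformly in $k$, we conclude that this expectation is at most $Ch^2$, as required. The constant $C$ is independent of $k$, $\xi_k$, and $\tilde{\xi}_k$ by construction.

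The proof is essentially a direct calculation rather than facing a real obstacle; the only subtlety, as noted, is selecting the polynomial growth bound over the taming bound so that the pointwise factor $(t-T_k)^2 \le h^2$ directly delivers the $O(h^2)$ rate, independently of the choice of $\alpha \in (0, 1/2]$.
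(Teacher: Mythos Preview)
Your proof is correct and matches the paper's approach exactly: Jensen's inequality on the inner conditional integral, followed by the polynomial growth bound $|b^{\xi,h}(y)|\le |b^{\xi}(y)|\le C(1+|y|^{\ell})$ and the uniform-in-time moment bound of Lemma~\ref{UITMB}, so that the explicit factor $(t-T_k)^2\le h^2$ gives the rate. One small slip: the conditional expectation $\mathbb{E}[K_2(\bar{X}(T_k))^2\mid\xi_k,\tilde{\xi}_k]$ does not literally equal the unconditional one (since $K_2$ depends on $\xi_k,\tilde{\xi}_k$ through $\tilde{b}$ and $b^{\tilde{\xi}_k,h}$), but this is harmless because your pointwise bound $K_2(y)^2\le Ch^2(1+|y|^{4\ell})$ is uniform in the batches.
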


The result actually follows from the Jensen's inequality, 
\begin{equation}\label{eq450}
    \begin{aligned}
     &\quad\int \bar{\rho}_t^{\tilde{\xi}_k} (x) \left(\int K_2  P(\bar{X}_{T_k} = y|\bar{X}'_t = x) dy \right)^2dx\\
     &\leq (t-T_k)^2\mathbb{E}\left[\left|\sqrt{\frac{\beta}{2}}\,\tilde{b}(\bar{X}_{T_k})\cdot b^{\tilde{\xi}_k,h}(\bar{X}_{T_k}) + \frac{1}{2}|\tilde{b}(\bar{X}_{T_k})|^2\right|^2\Big| \xi_k,\tilde{\xi}_k\right]
    \end{aligned}
\end{equation}
and the polynomial bounds of $\tilde{b}$ and $b^{\tilde{\xi}_k,h}$.

\begin{lemma}\label{lmm:iii}
Recall the notations
$K_3 := e^z - 1 - z$,
with 
\begin{equation}\label{def_z}
    z := \frac{\beta}{2}\,\tilde{b}(y)(x-y-(t-T_k)b^{\tilde{\xi}_k}(y)) - \frac{1}{2} |\tilde{b}(y)|^2(t-T_k),
\end{equation}
and
\begin{equation*}
    \tilde{b}(y) := \sqrt{\frac{\beta}{2}}\left(b^{\tilde{\xi}_k} - b^{\xi_k}\right)(y).
\end{equation*}
Then under the conditions of Theorem 
\ref{thm:tsgld}, there exists a positive constant $c$ independent of $k$, $\tilde{\xi}_k$ and $\xi_k$ such that for $h<1$, $t\in [T_k,T_{k+1})$, it holds
\begin{equation*}
     \int \bar{\rho}_t^{(\tilde{\xi}_k,h)} (x) \left(\int K_3  P(\bar{X}(T_k) = y|\bar{X}'(t) = x,\xi_k,\tilde{\xi}_k) dy \right)^2dx \leq c  h^2.
\end{equation*}
\end{lemma}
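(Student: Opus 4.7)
The plan is to recognize that $K_3=e^z-1-z$ is the quadratic Taylor remainder of the exponential, and that $e^z$ is a Dol\'eans--Dade-type factor, so conditioning on $\bar X(T_k)=y$ collapses $z$ to a one-dimensional shifted Gaussian. With the definition of $z$ matching the display preceding Lemma \ref{lmm:i} (i.e.\ using $b^{\tilde\xi_k,h}$ in the parenthesis and the coefficient $\sqrt{\beta/2}$), the T-SGLD relation $\bar X'(t)-y-(t-T_k)b^{\tilde\xi_k,h}(y)=\sqrt{2\beta^{-1}}(W_t-W_{T_k})$ gives
\[
z = \tilde b(y)\cdot(W_t-W_{T_k})-\tfrac12|\tilde b(y)|^2(t-T_k);
\]
writing $a^2:=|\tilde b(y)|^2(t-T_k)$, the conditional law of $z$ given $y$ is that of $aZ-a^2/2$ with $Z\sim N(0,1)$. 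After this reduction everything is an explicit Gaussian computation, and the only real issue is whether $a^2$ stays bounded as $h\to 0$, which is exactly where the restriction $\alpha\le 1/2$ plays its role.

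First, I would apply Jensen's inequality to the inner $dy$-average, reducing the target to $\mathbb{E}[K_3^2\mid\xi_k,\tilde\xi_k]$ under the joint law of $(\bar X(T_k),\bar X'(t))$ produced by the T-SGLD with batch $\tilde\xi_k$. Using the Gaussian moment identities $\mathbb{E}[e^z|y]=1$, $\mathbb{E}[e^{2z}|y]=e^{a^2}$, $\mathbb{E}[ze^z|y]=a^2/2$, $\mathbb{E}[z|y]=-a^2/2$, $\mathbb{E}[z^2|y]=a^2+a^4/4$, expanding the square yields the closed form
\[
\mathbb{E}[(e^z-1-z)^2\mid y] = e^{a^2}-1-a^2+\tfrac{a^4}{4}.
\]
The taming bound \eqref{eq:bhbound} then gives $|\tilde b(y)|\le Ch^{-\alpha}$ pointwise and hence $a^2\le Ch^{1-2\alpha}$, which stays bounded when $\alpha\le 1/2$. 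Taylor's expansion of $e^{a^2}$ yields $e^{a^2}-1-a^2\le Ca^4$, so $\mathbb{E}[K_3^2\mid y]\le C(t-T_k)^2|\tilde b(y)|^4$.

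Finally, I would integrate over $y\sim\bar\rho_{T_k}^{(h)}$. Splitting
\[
|\tilde b|\le\sqrt{\beta/2}\bigl(|b^{\tilde\xi_k,h}-b^{\tilde\xi_k}|+|b^{\tilde\xi_k}-b^{\xi_k}|+|b^{\xi_k}-b^{\xi_k,h}|\bigr),
\]
the middle term is uniformly bounded by Assumption \ref{SGLDA}(b), and the two taming differences have arbitrarily high $L^p$ rates via Lemma \ref{SE4} combined with the uniform-in-time moment bound in Lemma \ref{UITMB}. Hence $\mathbb{E}_y|\tilde b(y)|^4\le C$ uniformly in $k,h,\xi_k,\tilde\xi_k$, and combining the estimates gives $\mathbb{E}[K_3^2\mid\xi_k,\tilde\xi_k]\le Ch^2$, which is the claim. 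The main obstacle is the control of the exponential moment $e^{a^2}$: for $\alpha>1/2$ one would have $a^2=O(h^{1-2\alpha})\to\infty$ and the convenient Taylor bound $e^{a^2}-1-a^2\le Ca^4$ would break, forcing one to exploit the fact that $|\tilde b|$ is only large on a set of exponentially small probability, or to adopt a different strategy altogether.
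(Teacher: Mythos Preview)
Your proposal is correct and takes a genuinely different route from the paper. After the same Jensen reduction to $\mathbb{E}\bigl[|e^{\hat Y(t)}-1-\hat Y(t)|^2\,\big|\,\xi_k,\tilde\xi_k\bigr]$, the paper proceeds by It\^o's formula: writing $\bar Z(t):=e^{\hat Y(t)}-1-\hat Y(t)$, one gets $\bar Z(t)=\tfrac12|\tilde b|^2\Delta t+\tilde b\cdot\int_{T_k}^t(e^{\hat Y(s)}-1)\,dW_s$, then squares, uses It\^o isometry, and closes with Gr\"onwall (the exponent being $Ch^{1-2\alpha}$). You instead condition on $\bar X(T_k)=y$, reduce $z$ to $aZ-a^2/2$ with $Z\sim N(0,1)$ and $a^2=|\tilde b(y)|^2(t-T_k)$, and compute the exact closed form $\mathbb{E}[(e^z-1-z)^2\mid y]=e^{a^2}-1-a^2+a^4/4$, which you then Taylor-bound by $Ca^4$. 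Your approach is more elementary and yields a sharper pointwise bound; both rely on the same restriction $\alpha\le 1/2$ (for you, to keep $a^2$ bounded so the Taylor estimate applies; for the paper, to keep the Gr\"onwall exponent bounded), and both finish by controlling $\mathbb{E}|\tilde b(\bar X(T_k))|^4$ via the polynomial growth and the uniform moment bound. Your final splitting of $|\tilde b|$ via Assumption~\ref{SGLDA}(b) and Lemma~\ref{SE4} is valid but slightly more than needed: since $|b^{\xi,h}|\le|b^\xi|$ has polynomial growth, Lemma~\ref{UITMB} alone already gives $\mathbb{E}|\tilde b|^4\le C$.
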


The result here follows similarly by Jensen's inequality, 
\begin{equation*}
    \begin{aligned}
    \int \bar{\rho}_t^{(\tilde{\xi}_k,h)} (x) \left(\int K_3 P(\bar{X}_{T_k} = y|\bar{X}'_t = x,\xi_k,\tilde{\xi}_k) dy \right)^2dx
     \leq \mathbb{E}\left[\left|e^{\hat{Y}(t)} - 1 - \hat{Y}(t) \right|^2\Big|\xi_k,\tilde{\xi}_k\right],
    \end{aligned}
\end{equation*}
and then It\^o's formula. We omit the details. Here, we denote the process 
\begin{equation*}
\begin{split}
    \hat{Y}(t) &:= \sqrt{\frac{\beta}{2}}\tilde{b}(\bar{X}(T_k)) \cdot \left( \bar{X}'(t) -\bar{X}(T_k)-(t-T_k)b^{\tilde{\xi}_k}(\bar{X}(T_k)) \right) - \frac{1}{2} |\tilde{b}(\bar{X}(T_k))|^2 \Delta t\\
    &= -\frac{1}{2}|\tilde{b}(\bar{X}(T_k))|^2\Delta t +\tilde{b}(\bar{X}(T_k))\cdot \int_{T_k}^t dW.
\end{split}
\end{equation*}
with $\Delta t := t - T_k, \quad \forall t \in [T_k,T_{k+1}).$

\bibliographystyle{plain}
\bibliography{main}

\begin{thebibliography}{10}

\bibitem{bakry2013analysis}
Dominique Bakry, Ivan Gentil, and Michel Ledoux.
\newblock {\em Analysis and geometry of Markov diffusion operators}, volume
  348.
\newblock Springer Science \& Business Media, 2013.

\bibitem{bernstein1906generalisation}
Serge Bernstein.
\newblock Sur la g{\'e}n{\'e}ralisation du probl{\`e}me de {Dirichlet}:
  {Premi{\`e}re} partie.
\newblock {\em Mathematische Annalen}, 62(2):253--271, 1906.

\bibitem{bernstein1910generalisation}
Serge Bernstein.
\newblock Sur la g{\'e}n{\'e}ralisation du probl{\`e}me de {Dirichlet}:
  {Deuxi{\`e}me} partie.
\newblock {\em Mathematische Annalen}, 69(1):82--136, 1910.

\bibitem{bossy2021weak}
Mireille Bossy, Jean-Fran{\c{c}}ois Jabir, and Kerlyns Mart{\'\i}nez.
\newblock On the weak convergence rate of an exponential {Euler} scheme for
  {SDEs} governed by coefficients with superlinear growth.
\newblock 2021.

\bibitem{burkholder1971maximal}
Donald~L Burkholder, Richard~F Gundy, and Martin~L Silverstein.
\newblock A maximal function characterization of the class {$H^p$}.
\newblock {\em Transactions of the American Mathematical Society},
  157:137--153, 1971.

\bibitem{deng2019truncated}
Shounian Deng, Weiyin Fei, Wei Liu, and Xuerong Mao.
\newblock The truncated {EM} method for stochastic differential equations with
  {Poisson} jumps.
\newblock {\em Journal of Computational and Applied Mathematics}, 355:232--257,
  2019.

\bibitem{du2024collision}
Kai Du and Lei Li.
\newblock A collision-oriented interacting particle system for {Landau-type}
  equations and the molecular chaos.
\newblock {\em arXiv preprint arXiv:2408.16252}, 2024.

\bibitem{fang2020adaptive}
Wei Fang and Michael~B Giles.
\newblock Adaptive {Euler--Maruyama} method for {SDEs} with nonglobally
  {Lipschitz} drift.
\newblock {\em The Annals of Applied Probability}, 30(2):526--560, 2020.

\bibitem{fournier2014propagation}
Nicolas Fournier, Maxime Hauray, and St{\'e}phane Mischler.
\newblock Propagation of chaos for the {2D} viscous vortex model.
\newblock {\em Journal of the European Mathematical Society}, 16(7):1423--1466,
  2014.

\bibitem{gikhman2004theory}
Iosif~Ilich Gikhman and Anatoliui~Vladimirovich Skorokhod.
\newblock {\em The theory of stochastic processes {II}}.
\newblock Springer Science \& Business Media, 2004.

\bibitem{guo2024convergence}
Qian Guo, Jie He, and Lei Li.
\newblock Convergence analysis of an explicit method and its random batch
  approximation for the {McKean--Vlasov} equations with non-globally
  {Lipschitz} conditions.
\newblock {\em ESAIM: Mathematical Modelling and Numerical Analysis},
  58(2):639--671, 2024.

\bibitem{guo2017partially}
Qian Guo, Wei Liu, Xuerong Mao, and Rongxian Yue.
\newblock The partially truncated {Euler--Maruyama} method and its stability
  and boundedness.
\newblock {\em Applied Numerical Mathematics}, 115:235--251, 2017.

\bibitem{huang2025mean}
Zhenyu Huang, Shi Jin, and Lei Li.
\newblock Mean field error estimate of the random batch method for large
  interacting particle system.
\newblock {\em ESAIM: Mathematical Modelling and Numerical Analysis},
  59(1):265--289, 2025.

\bibitem{hutzenthaler2011strong}
Martin Hutzenthaler, Arnulf Jentzen, and Peter~E Kloeden.
\newblock Strong and weak divergence in finite time of euler's method for
  stochastic differential equations with non-globally {Lipschitz} continuous
  coefficients.
\newblock {\em Proceedings of the Royal Society A: Mathematical, Physical and
  Engineering Sciences}, 467(2130):1563--1576, 2011.

\bibitem{hutzenthaler2012strong}
Martin Hutzenthaler, Arnulf Jentzen, and Peter~E Kloeden.
\newblock Strong convergence of an explicit numerical method for {SDEs} with
  nonglobally {Lipschitz} continuous coefficients.
\newblock {\em The Annals of Applied Probability}, 22(4):1611--1641, 2012.

\bibitem{jin2020random}
Shi Jin, Lei Li, and Jian-Guo Liu.
\newblock {Random} {batch} {methods} {(RBM)} for interacting particle systems.
\newblock {\em Journal of Computational Physics}, 400:108877, 2020.

\bibitem{jin2021convergence}
Shi Jin, Lei Li, and Jian-Guo Liu.
\newblock Convergence of the random batch method for interacting particles with
  disparate species and weights.
\newblock {\em SIAM Journal on Numerical Analysis}, 59(2):746--768, 2021.

\bibitem{kloeden1992stochastic}
Peter~E Kloeden and Eckhard Platen.
\newblock {\em Stochastic differential equations}.
\newblock Springer, 1992.

\bibitem{kumar2016tamed}
Chaman Kumar and Sotirios Sabanis.
\newblock On tamed {Milstein} schemes of {SDEs} driven by {L{\'e}vy} noise.
\newblock {\em Discrete and Continuous Dynamical Systems-B}, 22(2):421--463,
  2016.

\bibitem{li2024error}
Lei Li, Mengchao Wang, and Yuliang Wang.
\newblock Error estimates of the {Euler's} method for stochastic differential
  equations with multiplicative noise via relative entropy.
\newblock {\em arXiv preprint arXiv:2409.04991}, 2024.

\bibitem{li2022sharp}
Lei Li and Yuliang Wang.
\newblock A sharp uniform-in-time error estimate for {Stochastic} {Gradient}
  {Langevin} {Dynamics}.
\newblock {\em CSIAM Transactions on Applied Mathematics}, 2025.

\bibitem{liu2020truncated}
Wei Liu, Xuerong Mao, Jingwen Tang, and Yue Wu.
\newblock Truncated {Euler-Maruyama} method for classical and time-changed
  non-autonomous stochastic differential equations.
\newblock {\em Applied Numerical Mathematics}, 153:66--81, 2020.

\bibitem{liu2025geometric}
Zhihui Liu and Jie Shen.
\newblock Geometric ergodicity and optimal error estimates for a class of novel
  tamed schemes to super-linear stochastic pdes.
\newblock {\em arXiv preprint arXiv:2502.19117}, 2025.

\bibitem{mao2015truncated}
Xuerong Mao.
\newblock The truncated {Euler--Maruyama} method for stochastic differential
  equations.
\newblock {\em Journal of Computational and Applied Mathematics}, 290:370--384,
  2015.

\bibitem{mao2016convergence}
Xuerong Mao.
\newblock Convergence rates of the truncated {Euler--Maruyama} method for
  stochastic differential equations.
\newblock {\em Journal of Computational and Applied Mathematics}, 296:362--375,
  2016.

\bibitem{mattingly2002ergodicity}
Jonathan~C Mattingly, Andrew~M Stuart, and Desmond~J Higham.
\newblock Ergodicity for {SDEs} and approximations: locally {Lipschitz} vector
  fields and degenerate noise.
\newblock {\em Stochastic processes and their applications}, 101(2):185--232,
  2002.

\bibitem{mil1975approximate}
Grigori~N Milstein.
\newblock Approximate integration of stochastic differential equations.
\newblock {\em Theory of Probability \& Its Applications}, 19(3):557--562,
  1975.

\bibitem{milstein2004stochastic}
Grigori~N Milstein and Michael~V Tretyakov.
\newblock {\em Stochastic numerics for mathematical physics}, volume~39.
\newblock Springer, 2004.

\bibitem{milstein2005numerical}
Grigori~N Milstein and Michael~V Tretyakov.
\newblock Numerical integration of stochastic differential equations with
  nonglobally {Lipschitz} coefficients.
\newblock {\em SIAM journal on numerical analysis}, 43(3):1139--1154, 2005.

\bibitem{mou2022improved}
Wenlong Mou, Nicolas Flammarion, Martin~J Wainwright, and Peter~L Bartlett.
\newblock Improved bounds for discretization of {Langevin} diffusions:
  {Near-optimal} rates without convexity.
\newblock {\em Bernoulli}, 28(3):1577--1601, 2022.

\bibitem{mou2018generalization}
Wenlong Mou, Liwei Wang, Xiyu Zhai, and Kai Zheng.
\newblock Generalization bounds of sgld for non-convex learning: Two
  theoretical viewpoints.
\newblock In {\em Conference on Learning Theory}, pages 605--638. PMLR, 2018.

\bibitem{robbins1951stochastic}
Herbert Robbins and Sutton Monro.
\newblock A stochastic approximation method.
\newblock {\em The annals of mathematical statistics}, pages 400--407, 1951.

\bibitem{rogers2000diffusions}
L~Chris~G Rogers and David Williams.
\newblock {\em Diffusions, {Markov} processes, and martingales: {It{\^o}}
  calculus}, volume~2.
\newblock Cambridge university press, 2000.

\bibitem{sabanis2013note}
Sotirios Sabanis.
\newblock A note on tamed euler approximations.
\newblock {\em Electronic Communications in Probability}, 18(a):47, 2013.

\bibitem{sabanis2016euler}
Sotirios Sabanis.
\newblock Euler approximations with varying coefficients: the case of
  superlinearly growing diffusion coefficients.
\newblock {\em Annals of Applied Probability}, 26(4):2083--2105, 2016.

\bibitem{stam1959some}
Aart~J Stam.
\newblock Some inequalities satisfied by the quantities of information of
  {Fisher} and {Shannon}.
\newblock {\em Information and Control}, 2(2):101--112, 1959.

\bibitem{teh2016consistency}
Yee~W Teh, Alexandre~H Thiery, and Sebastian~J Vollmer.
\newblock Consistency and fluctuations for stochastic gradient {Langevin}
  dynamics.
\newblock {\em The Journal of Machine Learning Research}, 17(1):193--225, 2016.

\bibitem{wang2013tamed}
Xiaojie Wang and Siqing Gan.
\newblock The tamed {Milstein} method for commutative stochastic differential
  equations with non-globally {Lipschitz} continuous coefficients.
\newblock {\em Journal of Difference Equations and Applications},
  19(3):466--490, 2013.

\bibitem{wang2024weak}
Xiaojie Wang, Yuying Zhao, and Zhongqiang Zhang.
\newblock Weak error analysis for strong approximation schemes of {SDEs} with
  super-linear coefficients.
\newblock {\em IMA Journal of Numerical Analysis}, 44(5):3153--3185, 2024.

\bibitem{wang2024thesis}
Xuebing Wang.
\newblock Random batch method for stochastic differential equations with
  non-globally {Lipschitz} coefficients.
\newblock Master's thesis, (in Chinese), Shanghai Jiao Tong University, 2024.

\bibitem{welling2011bayesian}
Max Welling and Yee~W Teh.
\newblock Bayesian learning via stochastic gradient {Langevin} dynamics.
\newblock In {\em Proceedings of the 28th international conference on machine
  learning (ICML-11)}, pages 681--688. Citeseer, 2011.

\end{thebibliography}

\end{document}